\newtheorem{example}{Example}
\newtheorem{algorithm}{Algorithm}
\newtheorem{experiment}{Experiment}
\title{Generalized Network Tomography \thanks{A summary of this work without proofs has appeared in the proceedings of the 50th Annual Allerton Conference on Communication, Control and Computing (see \cite{GuganC12}).}}
\author{Gugan Thoppe\thanks{School of Technology \& Computer Science, Tata Institute of Fundamental Research, 1st Homi Bhabha Road, Mumbai, INDIA ({\tt gugan@tcs.tifr.res.in}).}}
\begin{document}

\maketitle

\begin{abstract}
Generalized network tomography (GNT) deals with estimation of link performance parameters for networks with arbitrary topologies using only end-to-end path  measurements of pure unicast probe packets. In this paper, by taking advantage of the properties of generalized hyperexponential distributions and polynomial systems, a novel algorithm to infer the complete link metric distributions under the framework of GNT is developed. The significant advantages of this algorithm are that it does not require: i) the path measurements to be synchronous and ii) any prior knowledge of the link metric distributions. Moreover, if the path-link matrix of the network has the property that every pair of its columns are linearly independent, then it is shown that the algorithm can uniquely identify the link metric distributions up to any desired accuracy. Matlab based simulations have been included to illustrate the potential of the proposed scheme.
\end{abstract}

\begin{keywords} 
generalized network tomography, generalized hyperexponential distributions, unicast measurements, moment estimation, polynomial systems
\end{keywords}

\begin{AMS}
Primary, 47A50; Secondary, 47A52, 62J99, 65F30, 65C60, 68M10, 68M20
\end{AMS}

\pagestyle{myheadings}
\thispagestyle{plain}
\markboth{GUGAN THOPPE}{GENERALIZED NETWORK TOMOGRAPHY}

\section{Introduction}
The present age Internet is a massive, heterogeneous network of networks with a decentralized control. Despite this, accurate, timely and localized information about its connectivity, bandwidth and performance measures such as average delay experienced by traffic, packet loss rates across links, etc. is extremely vital for its efficient management. Brute force techniques, such as gathering the requisite information directly, impose an impractical overhead and hence are generally avoided. This necessitated the advent of network tomography---the science of inferring spatially localized network behaviour using only end-to-end aggregate metrics.

Recent advances in network tomography can be classified into two broad strands: i) traffic demand tomography---determination of source-destination traffic volumes via measurements of link volumes and ii) network delay tomography---link parameter estimation based on end-to-end path level measurements. For the first strand, see \cite{Vardi96, LeBlanc82, Zhang03a}. Under the second strand, the major problems studied include estimation of bottleneck link bandwidths, e.g. \cite{Liu07, Dey11}, link loss rates, e.g. \cite{Caceres99}, link delays, e.g. \cite{Coates01, LoPresti02, Shih03, Tsang03, Chen10, Deng12}, etc. Apart from these, there is also work on estimation of the topology of the network via path measurements. For excellent tutorials and surveys on the state of the art, see \cite{Adams00, Coates01, Castro04, Lawrence07}. For sake of definiteness, we consider here the problem of network delay tomography. The proposed solution is, however, also applicable to traffic demand tomography.

Given a binary matrix $A,$ usually called the path-link matrix, the central problem in network delay tomography, in abstract terms, is to accurately estimate the statistics of the vector $X$ from the measurement model $Y = AX.$ Based on this, existing work can be categorized into deterministic and stochastic approaches. Deterministic approaches, e.g. \cite{Gurewitz01, Chen04, Firooz10}, treat $X$ as a fixed but unknown vector and use linear algebraic techniques to solve for $X.$ Clearly, when no prior knowledge is available, $X$ can be uniquely recovered only when $A$ is invertible, a condition often violated in practice. Stochastic approaches, e.g. \cite{Chen10, Shih03, Tsang03, Zhang03a}, on the other hand, assume $X$ to be a non-negative random vector of mutually independent components and employ parametric/non-parametric estimation techniques to infer the statistical properties of $X$ using samples of $Y.$ In this paper, we build a stochastic network tomography scheme and establish sufficient conditions on $A$ for accurate identification of the distribution of $X.$

Stochastic network tomography approaches, in general, model the distribution of each component of $X$ using either a discrete distribution, e.g. \cite{Tsang03, Zhang03a}, or a finite mixture model, e.g. \cite{Chen10, Shih03}. They construct an optimization problem based on the characteristic function, e.g. \cite{Chen10}, or a suitably chosen likelihood function, e.g. \cite{Shih03, Tsang03, Zhang03a}, of $Y.$ Algorithms such as expectation-maximization, e.g. \cite{Shih03, Tsang03, Zhang03a}, generalized method of moments, e.g. \cite{Chen10}, etc., which mainly exploit the correlations in the components of $Y,$ are then employed to determine the optimal statistical estimates of $X.$ In practice, however, these algorithms suffer two main limitations. Firstly, note that these algorithms utilize directly the samples of the vector $Y.$ Thus, to implement them, one would crucially require i) end-to-end data generated using multicast probe packets, real or emulated, and ii) the network to be a tree rooted at a single sender with destinations at leaves. Divergence in either of the above requirements, which is often the case, thus results in performance degradation. Secondly, the optimization problems considered tend to have multiple local optima. Thus, without prior knowledge, the quality of the estimate is difficult to ascertain. 

In this paper, we consider the problem of generalized network tomography (GNT) wherein, the objective is to estimate the link performance parameters for networks with arbitrary topologies using only end-to-end measurements of pure unicast probe packets. Mathematically, given a binary matrix $A,$ we propose a novel method, henceforth called the distribution tomography (DT) scheme, to accurately estimate the distribution of $X,$ a vector of independent non-negative random variables, using only IID samples of the components of the random vector $Y = AX.$ In fact, our scheme does not even require prior knowledge of the distribution of $X.$ We thus overcome the limitations of the previous approaches. 

We rely on the fact that the class of \textit{generalized hyperexponential} (GH) distributions is dense in the set of non-negative distributions (see \cite{Botha86}). Using this, the idea is to approximate the distribution of each component of $X$ using linear combinations of known exponential bases and estimate the unknown weights. These weights are obtained by solving a set of polynomial systems based on the moment generating function of the components of  $Y.$ For unique identifiability, it is only required that every pair of columns of the matrix $A$ be linearly independent, a property that holds true for the path-link matrix of all multicast tree networks and more.

The rest of the paper is organized as follows. In the next section, we develop the notation and formally describe the problem. Section~\ref{sec:approxDist} recaps the theory of approximating non-negative distributions using linear combinations of exponentials. In Sections~\ref{sec:DistTomoSc} and \ref{sec:universality}, we develop our proposed method and demonstrate its universal applicability. We give numerical examples in Section~\ref{sec:ExptResults} and end with a short discussion in Section~\ref{sec:Disc}.

We highlight at the outset that the aim of this paper is to establish the theoeretical justification for the proposed scheme. The numerical examples presented are only for illustrative purposes. 

\section{Model and Problem Description}
\label{sec:model}
Any cumulative distribution function (CDF) that we work with is always assumed to be continuous with support $(0, \infty).$ The moment generating function (MGF) of the random variable $X$ will be $M_X(t) = \mathbb{E}(\exp(-tX)).$ For $n \in \mathbb{Z}_{++},$ we use $[n]$ and $S_n$ to represent respectively the set $\{1, \ldots, n\}$ and its permutation group. We use $\tbinom{n}{k_1, k_2, \cdots, k_d}$ to represent $\frac{n!}{k_1! \cdots k_d!}.$ Similarly, $\tbinom{n}{k}$ stands for $\frac{n!}{k! (n - k)!}.$ We use the notation $\mathbb{R}, \mathbb{R}_{+}$ and $\mathbb{R}_{++}$ to denote respectively the set of real numbers, non-negative real numbers and strictly positive real numbers. In the same spirit, for integers, we use $\mathbb{Z}, \mathbb{Z}_{+}$ and $\mathbb{Z}_{++}.$ All vectors are column vectors and their lengths refer to the usual Euclidean norm. For $\delta > 0,$ $B(\mathbf{v}; \delta)$ represents the open $\delta-$ball around the vector $\mathbf{v}.$ To denote the derivative of the map $f$ with respect to $\mathbf{x},$ we use $\dot{f}(\mathbf{x}).$ Lastly, all empty sums and empty products equal $0$ and $1$ respectively.

Let $X_1, \ldots, X_N$ denote the independent non-negative random variables whose distribution we wish to estimate. We assume that each $X_j$ has a GH distribution of the form
\begin{equation}
  \begin{array}{cccc}
    F_j(u) & = & \sum\limits_{k=1}^{d + 1} w_{jk} \left[ 1 - \exp\left( -\lambda_{k} u \right) \right],  & u \geq 0
  \end{array}
	\label{eq:Dist}
\end{equation}
where $\lambda_{k}>0,$ $\sum_{k=1}^{d+1} w_{jk} \lambda_{k} \exp(-\lambda_{k}u) \geq 0$ and $\sum_{k=1}^{d + 1}w_{jk} =  1.$ Note that the weights $\{w_{jk}\}$, unlike for hyperexponential distributions, are not required to be all positive. Further, we suppose that $\lambda_1, \ldots, \lambda_{d + 1}$ are distinct and explicitly known and that the weight vectors of distinct random variables differ at least in one component. Let $A \in \{0,1\}^{m \times N}$ denote an a priori known matrix which is $1-$identifiable in the following sense.
\begin{definition}
A matrix $A$ is $k-$identifiable if every set of $2k$  of its columns is linearly independent.
\end{definition}

Let $X \equiv (X_1, \ldots, X_N)$ and $Y = AX.$ For each $i \in [m],$ let $p_i := \{j \in [N]: a_{ij} = 1\}.$ Further, we  presume that, $\forall i \in [m],$  we have access to a sequence of IID samples $\{Y_{il}\}_{l \geq 1}$ of $Y_i.$ Our problem then is to estimate for each $X_j,$ its vector of weights $\mathbf{w}_j \equiv (w_{j1}, \ldots, w_{jd})$ and consequently its complete distribution $F_j,$ since $w_{j(d+1)} = 1 - \sum_{k = 1}^{d} w_{jk}.$

Before developing the estimation procedure, we begin by making a case for the distribution model of (\ref{eq:Dist}).
 
\section{Approximating distribution functions} 
\label{sec:approxDist}
Let $\mathcal{G} = \{G^1, \ldots, G^N\}$ denote a finite family of arbitrary non-negative distributions. For the problem of simultaneously estimating all member of $\mathcal{G}$ a useful strategy, as we demonstrate now, is to approximate each $G^i$ by a GH distribution.

Recall that the CDF of a GH random variable $X$ is given by
\begin{equation} \label{eq:GHCDF}
F_X(u) = \sum_{k = 1}^{d + 1}\alpha_k [1 - \exp(-\lambda_k u)], \hspace{.2in} u \geq 0,
\end{equation}
where $\lambda_k > 0,$ $\sum_{k = 1}^{d + 1}\alpha_k \lambda_k \exp(-\lambda_k u) \geq 0$ and $\sum_{k = 1}^{d + 1} \alpha_k = 1.$ Consequently, its MGF is given by
\begin{equation}
M_X(t) = \sum_{k = 1}^{d + 1}\alpha_k\frac{\lambda_k}{\lambda_k + t}.
\end{equation}
In addition to the simple algebraic form of the above quantities, the other major reason to use the GH class is that, in the sense of weak topology, it is dense in the set of all non-negative distributions (see \cite{Botha86}). In fact, as the following result from \cite{Ou97} shows, we have much more.

\begin{theorem} \label{thm:Ou}
For $n,k \in \mathbb{Z}_{++},$ let $X_{n,k}$ be a nonnegative GH random variable with mean $k/n,$ variance $\sigma_{n,k}^{2}$ and CDF $W_{n,k}.$ Suppose
  \begin{enumerate}
  \item the function $\nu:\mathbb{Z}_{++}\rightarrow\mathbb{Z}_{++}$ satisfies $\underset{n\rightarrow\infty}{lim}\nu(n)/n=\infty.$
  \item there exists $0<s<1$ such that $\underset{n\rightarrow\infty}{lim}n^{1+s} \sigma_{n,k}^{2}/k=0$ uniformly with respect to $k.$
  \end{enumerate}
  Then given any continuous non-negative distribution function $F,$ the following holds:
  \begin{enumerate}
  \item the function $F_{n}$ given by
    \begin{eqnarray*}
      F_{n}(u) & = & \sum_{k=1}^{\nu(n)}\left\{F(k/n)-F((k-1)/n)\right\} W_{n,k}(u)\\
      &  & +(1-F(\nu(n)/n))W_{n,\nu(n)+1}(u)
    \end{eqnarray*}
    is a GH distribution for every $n \in \mathbb{Z}_{++}$ and
  \item $F_{n}$ converges uniformly to $F,$ i.e., 
  \begin{equation*}
    \underset{n \rightarrow \infty}{\lim}\; \underset{-\infty < u < \infty}{\sup} \left|F_{n}(u)-F(u)\right| =  0.
  \end{equation*}\,
  \end{enumerate}
\end{theorem}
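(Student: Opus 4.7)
My plan is to prove the two parts separately: first that the mixture $F_{n}$ inherits the GH form from its components $W_{n,k}$, and then that $F_{n} \to F$ uniformly by upgrading pointwise convergence via P\'olya's theorem (pointwise convergence of distribution functions to a continuous limit is automatically uniform).

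For part (1), I would simply write $F_{n}$ explicitly as a convex combination and check the defining GH conditions. The weights $\beta_{k} := F(k/n) - F((k-1)/n)$ together with $\beta_{\nu(n)+1} := 1 - F(\nu(n)/n)$ are non-negative and telescope to $1$. Expanding each $W_{n,k}(u) = \sum_{j}\alpha_{kj}[1 - \exp(-\mu_{kj} u)]$ in its GH form and collecting terms, $F_{n}$ takes the form $\sum_{k,j}\beta_{k}\alpha_{kj}[1 - \exp(-\mu_{kj} u)]$: the new coefficients still sum to $1$, and the induced density $\sum_{k,j}\beta_{k}\alpha_{kj}\mu_{kj}\exp(-\mu_{kj}u)$ is a non-negative combination of the non-negative densities of the $W_{n,k}$, hence non-negative. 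So $F_{n}$ is GH.

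For part (2), fix $u \geq 0$ and compare $F_{n}(u)$ against the staircase $\tilde F_{n}(u) := \sum_{k}\beta_{k}\mathbf{1}[k/n \leq u]$. The latter is a Riemann discretization of $F$ at mesh $1/n$ plus a benign tail correction (vanishing because $\nu(n)/n \to \infty$ and $F$ is a proper continuous CDF), so $\tilde F_{n} \to F$ uniformly by uniform continuity of $F$. The residual
\begin{equation*}
F_{n}(u) - \tilde F_{n}(u) = \sum_{k}\beta_{k}\bigl(W_{n,k}(u) - \mathbf{1}[k/n \leq u]\bigr)
\end{equation*}
I split into three zones: $k/n < u - \epsilon$, $|k/n - u| \leq \epsilon$, and $k/n > u + \epsilon$. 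In the outer zones, Chebyshev's inequality applied to $X_{n,k}$ bounds each summand by $\sigma_{n,k}^{2}/\epsilon^{2}$, which via the uniform variance decay $\sigma_{n,k}^{2} \leq \gamma_{n} k/n^{1+s}$ (with $\gamma_{n}\to 0$) drives their total contribution to zero. The middle-zone mass is at most $F(u+\epsilon+1/n) - F(u-\epsilon)$, controlled by letting $\epsilon \to 0$ after $n \to \infty$ and invoking continuity of $F$.

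The main obstacle will be the Chebyshev bookkeeping in the outer zones, because $\nu(n) \gg n$ and $F$ is not assumed to have a finite mean, so $\sum_{k}\beta_{k}\sigma_{n,k}^{2}/\epsilon^{2}$ is not trivially small. I would exploit the $k$-linearity in the variance bound together with the identity $\sum_{k}\beta_{k}(k/n) \approx \int_{0}^{\nu(n)/n} u\, dF(u)$, handling the far tail via the trivial bound $W_{n,k}(u)\leq 1$ against the small mass $1 - F(u+\epsilon)$ and using the $n^{-s}$ factor to absorb any residual growth in $\nu(n)$. Once pointwise convergence has been secured, P\'olya's theorem promotes it to uniform convergence essentially for free and completes the proof.
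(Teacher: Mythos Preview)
The paper does not prove this theorem: it is quoted from \cite{Ou97} and used only as a black box to justify the GH modelling assumption in \eqref{eq:Dist}. So there is no in-paper argument to compare against, and your sketch is to be judged on its own merits.

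Your overall plan---a Bernstein-type comparison of $F_n$ with the staircase $\tilde F_n$, Chebyshev in the outer zones, continuity in the middle zone, then P\'olya---is the standard one and is sound. Part~(1) is fine as written. In Part~(2), however, the device you propose for the right outer zone does not close. There you need $\sum_{k:\,k/n>u+\epsilon}\beta_k W_{n,k}(u)$ to be small, i.e.\ each $W_{n,k}(u)$ close to $0$; replacing $W_{n,k}(u)$ by the trivial bound $1$ leaves you with $\sum_{k:\,k/n>u+\epsilon}\beta_k\approx 1-F(u+\epsilon)$, a fixed positive number for fixed $u,\epsilon$, and there is no $n^{-s}$ factor left to act on once $W_{n,k}$ has been discarded. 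Likewise, routing the estimate through $\sum_k\beta_k(k/n)\approx\int_0^{\nu(n)/n}u\,dF(u)$ does not help when $F$ has no first moment, since that integral may diverge and you have no quantitative control on $\gamma_n$ versus $\nu(n)/n$.

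The clean repair is to keep the full Chebyshev denominator rather than flooring it at $\epsilon^2$: for $k/n>u+\epsilon$,
\[
W_{n,k}(u)\;\le\;\frac{\sigma_{n,k}^2}{(k/n-u)^2}\;\le\;\frac{\gamma_n}{n^{s}}\cdot\frac{k/n}{(k/n-u)^2},
\]
and since $x\mapsto x/(x-u)^2$ is decreasing on $(u,\infty)$ with value $(u+\epsilon)/\epsilon^2$ at $x=u+\epsilon$, the entire right-zone sum is at most $\gamma_n(u+\epsilon)/(n^{s}\epsilon^2)\to 0$ for each fixed $u$. The left zone already works with your cruder bound because there $k\le n(u-\epsilon)$ is bounded. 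With pointwise convergence secured, P\'olya finishes the argument exactly as you say.
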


Observe that, for each $n,$ the exponential stage parameters of $F_n$ depend only on the choice of the random variables $\{X_{n,k}\!: 1\leq k\leq \nu(n) +1\}.$

What this observation and the above result imply in relation to $\mathcal{G}$ is that if we fix the random variables $\{X_{n,k}\}$ and let $M^i$ denote the MGF of  $G^i,$  then for any given $\epsilon_1, \epsilon_2 >0$ and any finite set $\tau=\{t_1,\ldots ,t_k\} \subset \mathbb{R}_+,$ $\exists n \equiv n(\epsilon_1, \epsilon_2, \tau) \in \mathbb{Z}_{++}$ such that for each $i \in [N],$ $G^i$ and its $n^{th}$ GH approximation, $F_n^i,$ are $\epsilon_1-$ close in the sup norm and for each $j \in [k],$ $|M^i(t_j)-M_n^i(t_j)|\leq \epsilon_2.$ Further, the exponential stage parameters are explicitly known and identical across the approximations $F_n^1, \ldots, F_n^N,$ which now justifies our model of (\ref{eq:Dist}).

The problem of estimating the individual members of $\mathcal{G}$ can thus be reduced to determining the vector of weights that characterizes each approximation and hence each distribution.

\section{Distribution tomography scheme} 
\label{sec:DistTomoSc}
The outline for this section is as follows. For each $i \in [m],$ we use the IID samples of $Y_i$ to estimate its MGF and subsequently build a polynomial system, say $H_i(\mathbf{x}) = 0.$ We call this the elementary polynomial system (EPS). We then show that for each $i \in [m]$ and each $j \in p_i,$ a close approximation of the vector $\mathbf{w}_j$ is present in the solution set of $H_i(\mathbf{x}) = 0,$ denoted $V(H_i).$ To match the weight vectors to the corresponding random variables, we make use of the fact that $A$ is $1-$identifiable.

\subsection{Construction of elementary polynomial systems}
\label{subsec:ConstPolySys}
Fix an arbitrary $i \in [m]$ and suppose that $|p_i| = N_i,$ i.e., $Y_i$ is a sum of $N_i$ random variables, which for notational convenience, we relabel as $X_1, \ldots, X_{N_i}.$ Because of independence of the random variables, observe that the MGF of $Y_i$ is well defined $\forall t \in \mathbb{R}_{++}$ and satisfies the relation
\begin{equation}
\label{eq:MGFRel}
M_{Y_{i}}(t) = \prod_{j = 1}^{N_i} \left\{ \sum_{k = 1}^{d + 1}w_{jk} \left( \frac{\lambda_{k}}{\lambda_{k}+t} \right) \right\}.
\end{equation}
On simplification, after substituting $w_{j(d+1)} = 1 - \sum_{k=1}^{d}w_{jk},$ we get
\begin{equation}
\label{eq:modMGFRel}
\mu_i(t) = \left\{ \prod_{j = 1}^{N_i} \left[ \sum_{k=1}^{d}w_{jk}\Lambda_k(t) + \lambda_{d + 1}\right] \right\},
\end{equation}
where $\Lambda_k(t) = (\lambda_{k} - \lambda_{d + 1})t/(\lambda_{k} + t)$ and $\mu_i(t) = M_{Y_{i}}(t) (\lambda_{d + 1} + t)^{N_i} .$

For now, let us assume that we know $M_{Y_i}(t)$ and hence $\mu_i(t)$ exactly for every valid $t.$ We will refer henceforth to this situation as the ideal case. Treating $t$ as a parameter, we can then use \eqref{eq:modMGFRel} to define a canonical polynomial
\begin{equation} \label{eq:tempGenPoly}
f(\mathbf{x};t) = \left\{ \prod_{j = 1}^{N_i} \left[ \sum_{k=1}^{d}x_{jk}\Lambda_k(t) + \lambda_{d + 1}\right] \right\} - \mu_i(t), 
\end{equation}
where $\mathbf{x} \equiv (\mathbf{x}_1, \ldots, \mathbf{x}_{N_i})$  with $\mathbf{x}_j \equiv (x_{j1}, \ldots, x_{jd}).$ As this is a multivariate map in $d \cdot N_i$ variables, we can choose an arbitrary set $\tau = \{t_1, \ldots, t_{d \cdot N_i}\} \subset \mathbb{R}_{++}$ consisting of distinct numbers and define an intermediate square polynomial system
\begin{equation}
\label{eq:tempPolSys}
F_\tau(\mathbf{x}) \equiv (f_1(\mathbf{x}), \ldots, f_{d \cdot N_i}(\mathbf{x})) = 0,
\end{equation}
where $f_k(\mathbf{x}) \equiv f(\mathbf{x}; t_k).$

Since \eqref{eq:tempPolSys} depends on choice of $\tau,$ analyzing it directly is difficult. But observe that i) the expansion of each $f_n$ or equivalently \eqref{eq:tempGenPoly} results in rational coefficients in $t$ of the form $\Lambda_{1}^{\omega_{1}}(t)\cdots\Lambda_{d}^{\omega_{d}}(t)\lambda_{d+1}^{\omega_{d+1}},$ where, for each $k,$ $\omega_k \in \mathbb{Z}_{+}$ and $\sum_{k=1}^{d+1}\omega_k = N_{i},$ and ii) the monomials that constitute each $f_n$ are identical. This suggests that one may be able to get a simpler representation for \eqref{eq:tempPolSys}. We do so in the following three steps, where the first two focus on simplifying \eqref{eq:tempGenPoly}.\vspace{0.1in}

\noindent \emph{Step1-Gather terms with common coefficients}: Let ${\boldsymbol \omega}$ denote the $d-$dimensional vector $(\omega_1, \ldots, \omega_d)$ and let $\boldsymbol \Omega \equiv (\boldsymbol \omega, \omega_{d + 1}).$ Also, let
\[
\Delta_{d+1,N_{i}}:=\left\{ \boldsymbol \Omega \in \mathbb{Z}_{+}^{d + 1}: \sum_{k=1}^{d+1}\omega_{k}=N_{i}\right\}.
\]
For a vector $\mathbf{b}\equiv(b_{1},\ldots,b_{N_{i}})\in[d+1]^{N_{1}},$ let its type be denoted by $\Theta(\mathbf{b})\equiv(\theta_{1}(\mathbf{b}),\ldots,\theta_{d+1}(\mathbf{b})),$ where $\theta_{k}(\mathbf{b})$ is the count of the element $k$ in $\mathbf{b}.$ For every $\boldsymbol \Omega \in\Delta_{d+1,N_{i}},$ additionally define the set
\[
\mathcal{B}_{\boldsymbol \Omega}=\left\{ \mathbf{b}\equiv(b_{1},\ldots,b_{N_{i}})\in[d+1]^{N_{i}}:\,\Theta(\mathbf{b}) = \boldsymbol \Omega\right\}
\]
and the polynomial $g(\mathbf{x}; \boldsymbol \Omega) = \sum_{\mathbf{b} \in\mathcal{B}_{\boldsymbol \Omega}}(\prod_{j=1,\, b_{j} \neq d+1}^{N_{i}}x_{jb_{j}}).$ For any $\boldsymbol \omega \in \mathbb{Z}_{+}^d,$ let $\Lambda^\mathbf{\boldsymbol \omega} \equiv \Lambda_{1}^{\omega_{1}}(t) \cdots \Lambda_{d}^{\omega_{d}}(t).$ Then collecting terms with common coefficients in \eqref{eq:tempGenPoly}, the above notations help us rewrite it as
\begin{equation} \label{eq:afterColComTerms}
f(\mathbf{x};t)=\sum_{\boldsymbol \Omega \in \Delta_{d+1,N_{i}}} g(\mathbf{x};\boldsymbol \Omega)\Lambda^{\boldsymbol \omega} \lambda_{d + 1}^{\omega_{d + 1}} - \mu_{i}(t).
\end{equation}

\noindent \emph{Step2-Coefficient expansion and regrouping}: Using an idea similar to partial fraction expansion for rational functions in $t$, the goal here is to decompose each $\Lambda^{\boldsymbol \omega}$ into simpler terms. For each $j,$ $k\in[d],$ let
\[
\beta_{jk}:=
\begin{cases}
\frac{\lambda_{j}\left(\lambda_{k}-\lambda_{d+1}\right)}{\left(\lambda_{j}-\lambda_{k}\right)}, & j \neq k,\\
1, & j = k.
\end{cases}
\]
For each $\boldsymbol \omega \in \mathbb{Z}_{+}^{d},$ let $\mathcal{D}(\boldsymbol \omega):= \{k \in [d]: \omega_{k} > 0\}.$ Further, if $\mathcal{D}(\boldsymbol \omega) \neq \emptyset,$ then $\forall k \in \mathcal{D}(\boldsymbol \omega)$ and $\forall q \in [\omega_k],$ let
\[
\bar{\Delta}_{kq}(\boldsymbol \omega) := \{\mathbf{s} \equiv (s_{1},\ldots,s_{d})\in\mathbb{Z}_{+}^{d}: s_{r} = 0, \, \forall r \in \mathcal{D}(\boldsymbol \omega)^{c} \cup \{k\}; \sum_{n=1}^{d}s_n = \omega_{k}-q\} \mbox{ and}
\]
\begin{equation} \label{eq:GammakqDefn}
\gamma_{kq}(\boldsymbol \omega):=\prod_{r \in \mathcal{D}(\boldsymbol \omega)}\beta_{kr}^{\omega_{r}}\left\{\sum_{\mathbf{s} \in \bar{\Delta}_{kq}(\boldsymbol \omega)}\prod_{r \in \mathcal{D}(\boldsymbol \omega)}\tbinom{\omega_{r} + s_{r} - 1}{\omega_{r} - 1}\beta_{rk}^{s_{r}}\right\}.
\end{equation}
The desired decomposition is now given in the following result.
\begin{lemma} \label{lem:coefExp}
If $\mathcal{D}(\boldsymbol \omega) \neq \emptyset,$ then
\begin{equation} \label{eq:coefExp}
\Lambda^{\boldsymbol \omega} = \sum_{k \in \mathcal{D}(\boldsymbol \omega)}\sum_{q = 1}^{\omega_{k}} \gamma_{kq}(\boldsymbol \omega)\Lambda_{k}^{q}(t)
\end{equation}
for all $t.$ Further, this expansion is unique.
\end{lemma}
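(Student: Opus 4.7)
The plan is to compute the coefficients $\gamma_{kq}(\boldsymbol\omega)$ via an asymptotic expansion of $\Lambda^{\boldsymbol\omega}$ at each pole $t = -\lambda_k$, and then verify the resulting identity by showing that its residual is a rational function with no poles that vanishes at $t = 0$. The key input is the M\"obius-type identity
\[
\Lambda_r(t) \;=\; \frac{\beta_{kr}}{1 - \beta_{rk}/\Lambda_k(t)}, \qquad r \neq k,
\]
which is verified by direct algebra from $\Lambda_k(t) = (\lambda_k - \lambda_{d+1})t/(\lambda_k + t)$ and the definitions of $\beta_{kr}$ and $\beta_{rk}$.

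Fix $k \in \mathcal{D}(\boldsymbol\omega)$. Since $\Lambda_k(t) \to \infty$ as $t \to -\lambda_k$, the generalized binomial series yields
\[
\Lambda_r^{\omega_r}(t) \;=\; \beta_{kr}^{\omega_r}\sum_{s_r \geq 0}\binom{\omega_r + s_r - 1}{\omega_r - 1}\left(\frac{\beta_{rk}}{\Lambda_k(t)}\right)^{s_r}.
\]
Taking the product over $r \in \mathcal{D}(\boldsymbol\omega) \setminus \{k\}$ (indices outside $\mathcal{D}(\boldsymbol\omega)$ contribute trivially since $\omega_r = 0$ there) and multiplying by $\Lambda_k^{\omega_k}$ expresses $\Lambda^{\boldsymbol\omega}$ as a series in descending powers of $\Lambda_k$. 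Collecting the terms with $\omega_k - \sum_r s_r = q$ for $q = 1, \ldots, \omega_k$, the resulting coefficient of $\Lambda_k^q$ is exactly the quantity $\gamma_{kq}(\boldsymbol\omega)$ of~(\ref{eq:GammakqDefn}); the summation domain $\bar{\Delta}_{kq}(\boldsymbol\omega)$ arises naturally, since extending the index $r$ to include $k$ and elements of $\mathcal{D}(\boldsymbol\omega)^c$ contributes only factors of $1$ (because $\beta_{kk} = 1$, $s_k = 0$, and $\omega_r = 0$ forces $s_r = 0$). Terms with $\omega_k - \sum_r s_r \leq 0$ form the regular part of $\Lambda^{\boldsymbol\omega}$ at $t = -\lambda_k$.

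To establish~(\ref{eq:coefExp}), set $\Phi := \Lambda^{\boldsymbol\omega} - \sum_{k \in \mathcal{D}(\boldsymbol\omega)}\sum_{q=1}^{\omega_k}\gamma_{kq}(\boldsymbol\omega)\Lambda_k^q$. The previous step shows that for each $k_0 \in \mathcal{D}(\boldsymbol\omega)$, the Laurent principal parts of $\Lambda^{\boldsymbol\omega}$ and of $\sum_{q}\gamma_{k_0 q}\Lambda_{k_0}^q$ at $t = -\lambda_{k_0}$ agree, so their difference is analytic there; since every remaining term $\gamma_{kq}\Lambda_k^q$ with $k \neq k_0$ is analytic at $-\lambda_{k_0}$, so is $\Phi$. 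As $\Phi$ is a rational function with no poles anywhere in $\mathbb{C}$ and is bounded at $\infty$, it is a constant; evaluating at $t = 0$, where $\Lambda^{\boldsymbol\omega}$ and each $\Lambda_k^q$ vanish, forces $\Phi \equiv 0$. For uniqueness, any relation $\sum_k\sum_q \alpha_{kq}\Lambda_k^q = 0$ must have vanishing Laurent expansion at each $-\lambda_k$; since $\Lambda_{k'}^{q'}$ is regular at $-\lambda_k$ for $k' \neq k$ and the leading Laurent coefficient of $\Lambda_k^q$ is the nonzero constant $[\lambda_k(\lambda_{d+1} - \lambda_k)]^q$, descending induction on $q$ forces $\alpha_{kq} = 0$ for all $k$ and $q$.

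The main obstacle is the combinatorial bookkeeping in the second step---expanding the product of generalized binomial series, correctly identifying the summation domain $\bar{\Delta}_{kq}(\boldsymbol\omega)$, and checking that the resulting coefficient of $\Lambda_k^q$ matches~(\ref{eq:GammakqDefn}) exactly. Once the M\"obius identity and the resulting asymptotic expansion are in hand, both existence (via the vanishing residual $\Phi$) and uniqueness (via Laurent matching) follow in a few lines.
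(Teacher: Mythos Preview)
Your argument is correct and takes a genuinely different route from the paper's proof. The paper proceeds by strong induction on $\deg(\boldsymbol\omega)=\sum_k\omega_k$, verifying the base case $\deg(\boldsymbol\omega)=2$ by hand and splitting the inductive step into three cases ($|\mathcal{D}(\boldsymbol\omega)|=1,2,\geq 3$); along the way it invokes the binomial identity $\sum_{r=j}^{n}\binom{r}{j}=\binom{n+1}{j+1}$ and repeatedly uses the non-singularity result of Lemma~\ref{lem:genTtauMatrix} both to force coefficients to match during the induction and to obtain uniqueness. Your proof instead reads off the coefficients directly from the Laurent principal part of $\Lambda^{\boldsymbol\omega}$ at each pole $t=-\lambda_k$, using the M\"obius-type identity $\Lambda_r=\beta_{kr}/(1-\beta_{rk}/\Lambda_k)$ and the generalized binomial expansion, and then kills the residual $\Phi$ by a Liouville-type step (no poles, bounded at $\infty$, vanishing at $0$). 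This is shorter and more conceptual: it eliminates the case analysis, makes the formula~(\ref{eq:GammakqDefn}) emerge naturally rather than by a guess-and-verify induction, and gives uniqueness via pole-by-pole Laurent matching without any appeal to the large matrix lemma. The paper's approach, on the other hand, is purely algebraic and needs no convergence considerations, and it dovetails with Lemma~\ref{lem:genTtauMatrix}, which the paper requires elsewhere anyway.
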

\begin{proof}
See Appendix~\ref{sec: Apdx CTexp}. \qquad
\end{proof}

By applying this expansion to each coefficient in \eqref{eq:afterColComTerms} and regrouping, we have
\begin{equation} \label{eq:genCanPol}
f(\mathbf{x};t)=\sum_{k=1}^{d}\sum_{q=1}^{N_{i}}h_{kq}(\mathbf{x})\Lambda_{k}^{q}(t)\lambda_{d+1}^{N_{i}-q}-c(t),
\end{equation}
where $c(t)=\mu_i(t) - \lambda_{d+1}^{N_{i}}$ and
\begin{equation} \label{eq:elemPolWeiCombo}
h_{kq}(\mathbf{x})=\sum_{\boldsymbol \Omega \in \Delta_{d+1,N_{i}},\, \omega_{k} \geq q}\frac{\gamma_{kq}(\boldsymbol \omega)}{\lambda_{d+1}^{N_{i} - q - \omega_{d+1}}}g(\mathbf{x};\boldsymbol \Omega). 
\end{equation}

We consider a simple example to better illustrate the above two steps. 

\begin{example} \label{eg:sEg}
Suppose $N_i = 3$ and $d = 2.$ In this case, clearly $\Lambda_{k}(t)=\frac{\left(\lambda_{k}-\lambda_{3}\right)t}{\lambda_{k}+t},$ $\mu_{i}(t)=(\lambda_{3}+t)^{3}M_{Y_{i}}(t)$ and $\mathbf{x}\equiv(x_{11},x_{12},x_{21},x_{22},x_{31},x_{32}).$ Equation (\ref{eq:tempGenPoly}) is 
\[
f(\mathbf{x};t) = \prod_{j = 1}^{3} \left[x_{j1}\Lambda_{1}(t)+x_{j2}\Lambda_{2}(t)+\lambda_{3}\right] - \mu_i(t),
\]
while \eqref{eq:afterColComTerms} is
\begin{eqnarray*}
f(\mathbf{x};t) & = & x_{11}x_{21}x_{31}\Lambda_{1}^{3}(t)+x_{12}x_{22}x_{32}\Lambda_{2}^{3}(t) \hspace{0.05in} + \\
 &  & (x_{11}x_{21}+x_{11}x_{31}+x_{21}x_{31})\Lambda_{1}^{2}(t)\lambda_{3} \hspace{0.05in} + \\
 &  & (x_{12}x_{22}+x_{12}x_{32}+x_{22}x_{32})\Lambda_{2}^{2}(t)\lambda_{3} \hspace{0.05in} +\\
 &  & \left(x_{11}x_{21}x_{32}+x_{11}x_{22}x_{31}+x_{12}x_{21}x_{31}\right)\Lambda_{1}^{2}(t)\Lambda_{2}(t) \hspace{0.05in}+\\
 &  & \left(x_{11}x_{22}x_{32}+x_{12}x_{21}x_{32}+x_{12}x_{22}x_{31}\right)\Lambda_{1}(t)\Lambda_{2}^{2}(t) \hspace{0.05in}+\\
 &  & \left(x_{11}+x_{21}+x_{31}\right)\Lambda_{1}(t)\lambda_{3}^{2}+\left(x_{12}+x_{22}+x_{32}\right)\Lambda_{2}(t)\lambda_{3}^{2} \hspace{0.05in} \hspace{0.05in}+\\
 &  & \left(x_{11}x_{22}+x_{11}x_{32}+x_{12}x_{21}+x_{21}x_{32}+x_{12}x_{31}+x_{22}x_{31}\right)\Lambda_{1}(t)\Lambda_{2}(t)\lambda_{3}\\
 &  & + \hspace{0.05in} \lambda_{3}^{3}-\mu_{i}(t).
\end{eqnarray*}
Now observe that 
\begin{equation*}
\Lambda_{1}(t)\Lambda_{2}(t) = \beta_{12}\Lambda_{1}(t)+\beta_{21}\Lambda_{2}(t),
\end{equation*}
\begin{equation*}
\Lambda_{1}^{2}(t)\Lambda_{2}(t) = \beta_{12}\Lambda_{1}^{2}(t) + \beta_{12}\beta_{21}\Lambda_{1}(t) + \beta_{21}^{2}\Lambda_{2}(t)
\end{equation*}
and 
\begin{equation*}
\Lambda_{1}(t)\Lambda_{2}^{2}(t) = \beta_{12}^{2}\Lambda_{1}(t) + \beta_{21}\Lambda_{2}^{2}(t) + \beta_{12}\beta_{21}\Lambda_{2}(t).
\end{equation*}
Substituting these identities, we can write $f(\mathbf{x}; t)$ in terms of \eqref{eq:genCanPol} as
\begin{eqnarray*}
f(\mathbf{x};t) & = & h_{11}(\mathbf{x})\Lambda_{1}(t)\lambda_{3}^{2}+h_{12}(\mathbf{x})\Lambda_{1}^{2}(t)\lambda_{3}+h_{13}(\mathbf{x})\Lambda_{1}^{3}(t)+\nonumber \\
 &  & h_{21}(\mathbf{x})\Lambda_{2}(t)\lambda_{3}^{2}+h_{22}(\mathbf{x})\Lambda_{2}^{2}(t)\lambda_{3}+h_{23}(\mathbf{x})\Lambda_{2}^{3}(t)+\lambda_{3}^{3}-\mu_{i}(t),
\end{eqnarray*}
where
\begin{eqnarray*}
h_{11}(\mathbf{x}) & = & x_{11}+x_{21}+x_{31}+\frac{\beta_{12}\beta_{21}}{\lambda_{3}^{2}}\left(x_{11}x_{21}x_{32}+x_{11}x_{22}x_{31}+x_{12}x_{21}x_{31}\right)+\\
 &  & \frac{\beta_{12}^{2}}{\lambda_{3}^{2}}\left(x_{11}x_{22}x_{32}+x_{12}x_{21}x_{32}+x_{12}x_{22}x_{31}\right)+\\
 &  & \frac{\beta_{12}}{\lambda_{3}}\left(x_{11}x_{22}+x_{11}x_{32}+x_{12}x_{21}+x_{21}x_{32}+x_{12}x_{31}+x_{22}x_{31}\right),
\end{eqnarray*}
\begin{equation*}
h_{12}(\mathbf{x)}=x_{11}x_{21}+x_{11}x_{31}+x_{21}x_{31}+\frac{\beta_{12}}{\lambda_{3}}\left(x_{11}x_{21}x_{32}+x_{11}x_{22}x_{31}+x_{12}x_{21}x_{31}\right),
\end{equation*}
\begin{equation*}
h_{13}(\mathbf{x})=x_{11}x_{21}x_{31},
\end{equation*}
and so on.
\end{example} \vspace{0.1in}

\noindent \emph{Step3-Eliminate dependence on $\tau$}: The advantage of \eqref{eq:genCanPol} is that, apart from $c(t),$ the number of t-dependent coefficients equals $d\cdot N_{i}$ which is exactly the number of unknowns in the polynomial $f$. Further, as shown below, they are linearly independent.

\begin{lemma} \label{lem:genTtauMatrix}
For $k \in [d\cdot N_i],$ let $b_k := \min \{ j \in [d]:j\cdot N_i \geq k \}.$ Then the matrix $T_{\tau}$, where, for $j,k \in [d \cdot N_i]$
\begin{equation}
(T_{\tau})_{jk} = \Lambda_{b_k}^{k - (b_k - 1)\cdot N_i}(t_j) \lambda_{d + 1}^{b_k \cdot N_i - k}, \label{eq:coeffMat}
\end{equation}
is non-singular.
\end{lemma}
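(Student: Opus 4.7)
The plan is to show $T_\tau$ has trivial kernel. Each column of $T_\tau$ carries a nonzero scalar factor $\lambda_{d+1}^{N_i - q}$ (where $q = k - (b_k - 1)N_i \in [N_i]$), which can be cleared without affecting non-singularity. After reindexing columns by pairs $(b,q) \in [d] \times [N_i]$, the goal reduces to showing: if $(c_{b,q})$ satisfies
\begin{equation*}
\phi(t_j) := \sum_{b=1}^{d}\sum_{q=1}^{N_i} c_{b,q}\,\Lambda_b^q(t_j) = 0 \quad \text{for every } j \in [d N_i],
\end{equation*}
then $c_{b,q} = 0$ for all $b, q$.

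The first step is to upgrade the $d N_i$ pointwise vanishings to the identity $\phi \equiv 0$ of rational functions. Clearing the common denominator $Q(t) = \prod_{b=1}^{d}(\lambda_b + t)^{N_i}$, each summand of the numerator $P$ has the form $c_{b,q}(\lambda_b - \lambda_{d+1})^q t^q (\lambda_b + t)^{N_i - q}\prod_{b' \neq b}(\lambda_{b'} + t)^{N_i}$, of degree $d N_i$, so $\deg P \leq d N_i$. Since the $t_j$ are distinct and strictly positive, $Q(t_j) \neq 0$, giving $P(t_j) = 0$ for $j \in [d N_i]$. The crucial additional observation is that $\phi(0) = 0$ since $\Lambda_b(0) = 0$ for every $b$, and $Q(0) = \prod_b \lambda_b^{N_i} \neq 0$, so $P(0) = 0$ as well. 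Thus $P$ has at least $d N_i + 1$ distinct zeros while having degree at most $d N_i$, forcing $P \equiv 0$ and therefore $\phi \equiv 0$.

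The second step disentangles the contributions indexed by different $b$ via a pole-location argument. For each fixed $b$, set $\Psi_b(t) := \sum_{q=1}^{N_i} c_{b,q}\Lambda_b^q(t)$; its only possible pole is at $t = -\lambda_b$, and these pole locations are pairwise distinct since the $\lambda_k$ are distinct. Rewriting $\sum_b \Psi_b \equiv 0$ as $\Psi_{b_0} = -\sum_{b \neq b_0}\Psi_b$ exhibits $\Psi_{b_0}$ as analytic at $-\lambda_{b_0}$; hence $\Psi_{b_0}$ is a pole-free rational function, i.e., a polynomial. Since each $\Lambda_b(t) \to \lambda_b - \lambda_{d+1}$ as $t \to \infty$, $\Psi_{b_0}$ is bounded and thus constant, while $\Psi_{b_0}(0) = 0$ forces this constant to be zero. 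Finally, as $\Lambda_b$ is a non-constant rational function on $\mathbb{R}_{++}$, the substitution $y = \Lambda_b(t)$ turns $\Psi_b \equiv 0$ into the polynomial identity $\sum_{q=1}^{N_i} c_{b,q} y^q = 0$ on an interval of $y$-values, from which $c_{b,q} = 0$ for every $q$.

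I expect the main obstacle to be the first step: one must exhibit strictly more distinct zeros of $P$ than its degree bound allows, and the extra zero at $t = 0$ (coming from the common property $\Lambda_b(0) = 0$) is precisely what makes the count work. Once $\phi \equiv 0$ is in hand, the disentangling via distinct pole locations and the Möbius-type substitution $y = \Lambda_b(t)$ is routine.
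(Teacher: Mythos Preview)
Your proof is correct and takes a genuinely different route from the paper. The paper (Appendix~A) proceeds by an explicit determinant computation: after row and column operations it reduces $T_\tau$ to a Cauchy-like matrix $W$ with entries $1/(\lambda_i+t_j)^r$, and then proves by induction on $d$ (treating $\lambda_d$ as a variable and tracking root multiplicities of $\det(B)$ as a polynomial in $\lambda_d$) a closed-form product formula
\[
\det(W)=\Bigl(\prod_{j,i}(\lambda_i+t_j)^{-\omega_i}\Bigr)\Bigl(\prod_{i_1<i_2}(\lambda_{i_2}-\lambda_{i_1})^{\omega_{i_1}\omega_{i_2}}\Bigr)\Bigl(\prod_{j_1<j_2}(t_{j_2}-t_{j_1})\Bigr),
\]
from which nonvanishing is immediate. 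Your argument instead shows the kernel is trivial directly: clearing denominators gives a polynomial $P$ of degree at most $dN_i$ with $dN_i$ prescribed zeros at the $t_j$, and the key extra observation $\Lambda_b(0)=0$ supplies a $(dN_i+1)$-st zero at $t=0$ (distinct because $\tau\subset\mathbb{R}_{++}$), forcing $P\equiv 0$; then the pairwise distinct pole locations $-\lambda_b$ decouple the sums $\Psi_b$, and boundedness at infinity plus $\Psi_b(0)=0$ finishes. Your approach is shorter and more conceptual, needing only elementary facts about rational functions, whereas the paper's approach yields the exact value of $\det(T_\tau)$ as a bonus (potentially useful for conditioning estimates, though the paper does not exploit this).
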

\begin{proof}
See Appendix~\ref{sec: Apdx NonSingOfCMat}. \qquad
\end{proof}

Observe that if we let $\mathbf{c_{\tau}} \equiv (c(t_{1}),\ldots,c(t_{d\cdot N_{i}})),$ $\mathcal{E}_k(\mathbf{x}) \equiv (h_{k1}(\mathbf{x}), \ldots, h_{kN_{i}}(\mathbf{x}))$ and $\mathcal{E}(\mathbf{x})$ $\equiv (\mathcal{E}_1(\mathbf{x}),\ldots,$ $\mathcal{E}_d(\mathbf{x})),$ then \eqref{eq:tempPolSys} can be equivalently expressed as
\begin{equation}
\label{eq:genSymPolSys}
T_{\tau}\mathcal{E}(\mathbf{x})-\mathbf{c}_{\tau}=0.
\end{equation}
Premultiplying \eqref{eq:genSymPolSys} by $\left(T_{\tau}\right)^{-1},$ which now exists by Lemma~\ref{lem:genTtauMatrix}, we have
\begin{equation} \label{eq:genSecSymPolSys}
\mathcal{E}(\mathbf{x})-\left(T_{\tau}\right)^{-1}\mathbf{c}_{\tau}=0.
\end{equation}
Clearly, $\mathbf{w}\equiv(\mathbf{w}_{1},\ldots,\mathbf{w}_{N_{i}})$ is a root of \eqref{eq:tempGenPoly} and hence of \eqref{eq:genSecSymPolSys}. This immediately implies that $\left(T_{\tau}\right)^{-1}\mathbf{c}_{\tau} = \mathcal{E}(\mathbf{w})$ and consequently \eqref{eq:genSecSymPolSys} can rewritten as
\begin{equation} \label{eq:genElePolSys}
H_{i}(\mathbf{x})\equiv\mathcal{E}(\mathbf{x})-\mathcal{E}(\mathbf{w})=0.
\end{equation}
Note that \eqref{eq:genElePolSys} is devoid of any reference to the set $\tau$ and can be arrived at using any valid $\tau.$ For this reason, we will henceforth refer to \eqref{eq:genElePolSys} as the EPS.

\begin{example}
Let $N_i = d = 2.$ Also, let $\lambda_1 = 5, \lambda_2 = 3$ and $\lambda_3 = 1.$ Then the map $\mathcal{E}$ described above is given by
\begin{equation}
\label{eq:egEPS}
\mathcal{E}(\mathbf{x})=\left(
\begin{array}{c}
x_{11} + x_{21} + 5(x_{11}x_{22} + x_{12}x_{21})\\

x_{11}x_{21}\\

x_{12} + x_{22} - 6(x_{11}x_{22} + x_{12}x_{21})\\

x_{12}x_{22}\\
\end{array}
\right).
\vspace{0.2cm}
\end{equation}

\end{example}

Two useful properties of the EPS are stated next. For $\sigma \in S_{N_i},$ let $\mathbf{x}_\sigma := (\mathbf{x}_{\sigma(1)}, \ldots, \mathbf{x}_{\sigma(N_i)})$ denote a permutation of the vectors $\mathbf{x}_1, \ldots, \mathbf{x}_{N_i}.$ Further, for any $\mathbf{x} \in \mathbb{R}^{d \cdot N_i},$ let $\pi_{\mathbf{x}} := \{\mathbf{x}_{\sigma}: \sigma \in S_{N_i}\}.$ 

\begin{lemma} \label{lem:EPSSymm}
$H_i(\mathbf{x}) = H_i(\mathbf{x}_\sigma),$ $\forall \sigma \in S_{N_i}.$ That is, the map $H_i$ is symmetric.
\end{lemma}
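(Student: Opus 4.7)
The plan is to derive the symmetry of $H_i$ from the manifest block-symmetry of the generating polynomial $f(\mathbf{x};t)$ in \eqref{eq:tempGenPoly}, and then push this symmetry through the decomposition \eqref{eq:genCanPol} using the uniqueness afforded by Lemma~\ref{lem:genTtauMatrix}.

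First I would observe that $f(\mathbf{x};t)$, as written in \eqref{eq:tempGenPoly}, is a product over the index $j$ of factors that depend only on $\mathbf{x}_j$ (plus the $\mathbf{x}$-independent term $-\mu_i(t)$). Consequently, for every $\sigma \in S_{N_i}$ and every $t$, reshuffling the order of the factors yields $f(\mathbf{x}_\sigma;t) = f(\mathbf{x};t)$.

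Next, applying the decomposition \eqref{eq:genCanPol} to both $f(\mathbf{x};t)$ and $f(\mathbf{x}_\sigma;t)$ and subtracting, one obtains, for every $t$,
\begin{equation*}
\sum_{k=1}^{d}\sum_{q=1}^{N_{i}}\bigl[h_{kq}(\mathbf{x})-h_{kq}(\mathbf{x}_\sigma)\bigr]\,\Lambda_{k}^{q}(t)\,\lambda_{d+1}^{N_{i}-q} \;=\; 0.
\end{equation*}
Evaluating this identity at any set $\tau = \{t_1,\ldots,t_{d\cdot N_i}\}$ of distinct positive reals and writing the resulting linear system in matrix form, the coefficient matrix is precisely $T_\tau$ from \eqref{eq:coeffMat}. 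Since Lemma~\ref{lem:genTtauMatrix} guarantees that $T_\tau$ is nonsingular, I conclude that $h_{kq}(\mathbf{x}) = h_{kq}(\mathbf{x}_\sigma)$ for every $k \in [d]$ and $q \in [N_i]$. Hence $\mathcal{E}(\mathbf{x}) = \mathcal{E}(\mathbf{x}_\sigma)$, and therefore $H_i(\mathbf{x}) = \mathcal{E}(\mathbf{x}) - \mathcal{E}(\mathbf{w}) = \mathcal{E}(\mathbf{x}_\sigma) - \mathcal{E}(\mathbf{w}) = H_i(\mathbf{x}_\sigma)$.

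There is no real obstacle here: the entire content of the lemma is that the symmetry already present in the product form of $f$ survives the algebraic rearrangement carried out in Steps~1 and~2 of Section~\ref{subsec:ConstPolySys}. An alternative, slightly more direct route would be to argue from the explicit formula \eqref{eq:elemPolWeiCombo} that each $g(\mathbf{x};\boldsymbol\Omega)$ is symmetric in the blocks $\mathbf{x}_1,\ldots,\mathbf{x}_{N_i}$, since the indexing set $\mathcal{B}_{\boldsymbol\Omega}$ is closed under permutation of the coordinates of $\mathbf{b}$; this gives the symmetry of each $h_{kq}$ without invoking Lemma~\ref{lem:genTtauMatrix}. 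I prefer the first route as it reuses machinery already in hand.
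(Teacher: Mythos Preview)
Your proof is correct and follows essentially the same approach as the paper. The paper compresses your argument into a single line by noting the identity $H_i(\mathbf{x}) = (T_\tau)^{-1} F_\tau(\mathbf{x})$ (which follows from \eqref{eq:genSymPolSys}--\eqref{eq:genElePolSys}) and then invoking the evident symmetry of $F_\tau$; your version simply unfolds this, showing $\mathcal{E}(\mathbf{x}) = \mathcal{E}(\mathbf{x}_\sigma)$ via the same nonsingularity of $T_\tau$.
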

\begin{proof}
Observe that $H_i(\mathbf{x}) = (T_\tau)^{-1} F_{\tau}(\mathbf{x})$ and $F_\tau,$ as defined in \eqref{eq:tempPolSys}, is symmetric. The result thus follows. \qquad \end{proof}

\begin{lemma} \label{lem:EPSWellBeh}
There exists an open dense set $\mathcal{R}_i$ of $\mathbb{R}^{d \cdot N_i}$ such that if $\mathbf{w} \in \mathcal{R}_i,$ then $|V(H_i)| = k \times N_i!,$ where $k \in \mathbb{Z}_{++}$ is independent of $\mathbf{w} \in \mathcal{R}_i.$ Further, each solution is non-singular.
\end{lemma}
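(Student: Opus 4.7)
My plan is to argue over $\mathbb{C}$ first and then descend to $\mathbb{R}$. Extend $\mathcal{E}$ to a polynomial morphism $\mathcal{E}: \mathbb{C}^{d \cdot N_i} \to \mathbb{C}^{d \cdot N_i}$ with real coefficients, so that $H_i(\mathbf{x}) = 0$ is the fiber equation $\mathcal{E}(\mathbf{x}) = \mathcal{E}(\mathbf{w})$. The first step is to verify that $\mathcal{E}$ is dominant, i.e.\ $\dim \overline{\mathcal{E}(\mathbb{C}^{d \cdot N_i})} = d \cdot N_i$; it suffices to exhibit a single point at which the Jacobian $\dot{\mathcal{E}}$ is non-singular. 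Given dominance, since source and target are irreducible of equal dimension, the standard generic-fiber theorem of algebraic geometry (equivalently, the parameter-continuation principle from numerical algebraic geometry) yields a Zariski open dense set $U_1 \subset \mathbb{C}^{d \cdot N_i}$ such that for every $\mathbf{w} \in U_1$ the fiber $\mathcal{E}^{-1}(\mathcal{E}(\mathbf{w}))$ consists of exactly $D := \deg \mathcal{E}$ points, $\mathcal{E}$ is \'etale at each of them, and hence each is a non-singular zero of $H_i(\cdot;\mathbf{w})$.

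Next I would upgrade the $S_{N_i}$-symmetry of Lemma~\ref{lem:EPSSymm} into the factor $N_i!$ by ensuring the action of $S_{N_i}$ on $V(H_i)$ is free. For each pair $j \neq k$, set $D_{jk} := \{\mathbf{x}: \mathbf{x}_j = \mathbf{x}_k\} \subset \mathbb{C}^{d \cdot N_i}$ and
\[
E_{jk} := \{\mathbf{w}: V(H_i(\cdot;\mathbf{w})) \cap D_{jk} \neq \emptyset\} = \mathcal{E}^{-1}\bigl(\mathcal{E}(D_{jk})\bigr).
\]
Since $\dim D_{jk} = d(N_i - 1)$ and $\mathcal{E}$ is generically finite, the constructible set $\mathcal{E}(D_{jk})$ has dimension at most $d(N_i - 1)$, and so does its preimage $E_{jk}$; in particular $\overline{E_{jk}}$ is a proper Zariski-closed subset of $\mathbb{C}^{d \cdot N_i}$. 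Defining $U_2 := U_1 \setminus \bigcup_{j < k} \overline{E_{jk}}$, still Zariski open and dense, I obtain: for every $\mathbf{w} \in U_2$, all solutions of $H_i(\mathbf{x})=0$ have pairwise distinct sub-vectors, so the $S_{N_i}$-action on $V(H_i)$ is free. Orbits therefore have cardinality exactly $N_i!$, giving $|V(H_i)| = k \cdot N_i!$ with $k := D/N_i! \in \mathbb{Z}_{++}$ independent of $\mathbf{w} \in U_2$; the positivity of $k$ follows because the $N_i!$ permutations of $\mathbf{w}$ itself are already distinct solutions.

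To descend to $\mathbb{R}$, note that $U_2$ is the complement of an algebraic set cut out by polynomials with real coefficients. Hence $\mathcal{R}_i := U_2 \cap \mathbb{R}^{d \cdot N_i}$ is the complement of a proper real algebraic subvariety, and is therefore open and dense in the Euclidean topology of $\mathbb{R}^{d \cdot N_i}$. The main obstacle I anticipate is the first step---establishing dominance of $\mathcal{E}$: this requires locating a point at which $\dot{\mathcal{E}}$ is invertible, which most plausibly proceeds by a direct calculation using the explicit form \eqref{eq:elemPolWeiCombo}, perhaps at a configuration that renders the Jacobian determinant tractable via the symmetric structure of $\mathcal{E}$. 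The remaining steps are comparatively standard applications of generic-fiber arguments together with the complex-to-real transfer principle for Zariski dense subsets.
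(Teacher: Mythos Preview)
Your proposal is correct and follows essentially the same route as the paper's Appendix~\ref{sec: Apdx RPEPS}: complexify, verify dominance of $\mathcal{E}$ via an explicit Jacobian computation, invoke generic-fiber and regular-value theorems to obtain a Zariski open set of parameters $\mathbf{w}$ over which the fiber is finite with every root non-singular and of constant cardinality, extract the factor $N_i!$ from the $S_{N_i}$-symmetry, and descend to $\mathbb{R}$. You correctly identify the Jacobian check as the only substantive obstacle; the paper devotes Lemma~\ref{lem:egRegPt} to exactly this, evaluating $\det\dot{\mathcal{E}}$ at the point $\mathbf{a}$ with $\mathbf{a}_j=(a_j,\ldots,a_j)\in\mathbb{C}^d$ for distinct scalars $a_1,\ldots,a_{N_i}$ and obtaining $\prod_{j_1<j_2}(a_{j_1}-a_{j_2})^d$.

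The one noteworthy difference is how you secure freeness of the $S_{N_i}$-action. You excise the loci $E_{jk}=\mathcal{E}^{-1}(\mathcal{E}(D_{jk}))$ by a dimension count; the paper instead gets freeness for free from non-singularity. Indeed, differentiating the invariance $\mathcal{E}(P_\sigma\mathbf{x})=\mathcal{E}(\mathbf{x})$ (with $P_\sigma$ the block-permutation matrix) at a fixed point $\mathbf{x}^*=P_\sigma\mathbf{x}^*$ yields $\dot{\mathcal{E}}(\mathbf{x}^*)(P_\sigma-I)=0$, so $\dot{\mathcal{E}}(\mathbf{x}^*)$ is singular whenever $\sigma\neq\mathrm{id}$. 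Hence on the set where all roots are already non-singular, no root can have a repeated sub-vector, and your extra excision is unnecessary (though not wrong). One small repair if you keep your argument: the claim ``so does its preimage $E_{jk}$'' only holds over the finite-fiber locus, so intersect with $U_1$ before bounding dimension and taking closures.
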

\begin{proof}
See Appendix~\ref{sec: Apdx RPEPS}. \qquad \end{proof}

We henceforth assume that $\mathbf{w} \in \mathcal{R}_i.$ From the definition of the EPS in \eqref{eq:genElePolSys}, it is obvious that $\mathbf{w} \in V(H_i).$ By Lemma~\ref{lem:EPSSymm}, it also follows that if $\mathbf{x}^{*} \in V(H_i),$ then $\pi_{\mathbf{x}^{*}} \subset V(H_i).$ Hence, it suffices to  work with
\begin{equation} \label{eq:redSolSet}
\mathcal{M}_i = \{\alpha \in \mathbb{C}^d : \exists \mathbf{x}^* \in V(H_i) \mbox{ with } \mathbf{x}_1^* = \alpha \}.
\end{equation}
Clearly, $\mathcal{W}_i := \{\mathbf{w}_1, \ldots, \mathbf{w}_{N_i}\} \subset \mathcal{M}_i.$ A point to note here is that $\mathcal{I}_i := \mathcal{M}_i \backslash \mathcal{W}_i$ is not empty in general.

Our next objective is to develop the above theory for the case where for each $i \in [m],$ instead of the exact value of $M_{Y_i}(t),$ we have access only to the IID realizations $\{Y_{il}\}_{l \geq 1}$ of the random variable $Y_i.$ That is, for each $k \in [N_i],$ we have to use the sample average $\hat{M}_{Y_i}(t_k; L) = \left(\sum_{l = 1}^{L} \exp(-t_k Y_{il})\right)/L $ for an appropriately chosen large $L,$ $\hat{c}(t_k; L) = \hat{M}_{Y_i}(t_k)(\lambda_{d+1} + t_k)^{N_i} - \lambda_{d+1}^{N_i}$ and $\hat{\mathbf{c}}_{\tau, L} \equiv (\hat{c}(t_1; L), \ldots, \hat{c}(t_{d \cdot N_i}; L))$ as substitutes for each $M_{Y_i}(t_k),$ each $c(t_k)$ and $\mathbf{c}_\tau$ respectively. But even then note that the noisy or the perturbed version of the EPS
\begin{equation}
\label{eq:NoiGenElePolSys}
\hat{H}_i(\mathbf{x}) \equiv \mathcal{E}(\mathbf{x}) - (T_{\tau})^{-1}(\hat{\mathbf{c}}_{\tau, L}) = 0.
\end{equation}
is always well defined. More importantly, the perturbation is only in its constant term. As in Lemma~\ref{lem:EPSSymm}, it then follows that the map $\hat{H}_i$ is symmetric.

Next observe that since $\mathcal{R}_i$ is open (see Lemma~\ref{lem:EPSWellBeh}), there exists a small enough $\bar{\delta}_i > 0$ such that $B(\mathbf{w}; \bar{\delta}_i) \subset \mathcal{R}_i.$ Using the regularity of solutions of the EPS (see Lemma~\ref{lem:EPSWellBeh}), the inverse function theorem then gives the following result.

\begin{lemma}
\label{lem:NoiEPSWellBeh}
Let $\delta \in (0, \bar{\delta}_i)$ be such that for any two distinct solutions in $V(H_i),$ say $\mathbf{x}^{*}$ and $\mathbf{y}^{*},$ $B(\mathbf{x}^{*}; \delta) \cap B(\mathbf{y}^{*}; \delta) = \emptyset.$ Then there exists an $\epsilon(\delta) > 0$ such that if $\mathbf{u} \in \mathbb{R}^{d \cdot N_i}$ and $||\mathbf{u} - \mathcal{E}(\mathbf{w})|| < \epsilon(\delta),$ then the solution set $V(\hat{H}_i)$ of the perturbed EPS $\mathcal{E}(\mathbf{x}) - \mathbf{u} = 0$ satisfies the following:
\begin{enumerate}
\item All roots in $V(\hat{H}_i)$ are regular points of the map $\mathcal{E}.$

\item For each $\mathbf{x}^{*} \in V(H_i),$ there is one and only one $\mathbf{z}^{*} \in V(\hat{H}_i)$ such that $||\mathbf{x}^{*} - \mathbf{z}^{*}|| < \delta.$
\end{enumerate}
\end{lemma}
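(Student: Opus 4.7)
The plan is to deploy the inverse function theorem simultaneously at every element of $V(H_i)$, using the non-singularity supplied by Lemma~\ref{lem:EPSWellBeh}. Since $\mathbf{w}\in\mathcal{R}_i$, each $\mathbf{x}^{*}\in V(H_i)$ satisfies $\det\dot{\mathcal{E}}(\mathbf{x}^{*})\neq 0$, so the IFT furnishes open neighbourhoods $U_{\mathbf{x}^{*}}\subset B(\mathbf{x}^{*};\delta)$ and $V_{\mathbf{x}^{*}}$ of $\mathbf{x}^{*}$ and $\mathcal{E}(\mathbf{x}^{*})=\mathcal{E}(\mathbf{w})$ respectively, on which $\mathcal{E}$ restricts to a biholomorphism. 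Because $V(H_i)$ is finite, $V:=\bigcap_{\mathbf{x}^{*}\in V(H_i)}V_{\mathbf{x}^{*}}$ is still an open neighbourhood of $\mathcal{E}(\mathbf{w})$, and one can pick $\epsilon_1>0$ with $B(\mathcal{E}(\mathbf{w});\epsilon_1)\subset V$.

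For any $\mathbf{u}$ with $\|\mathbf{u}-\mathcal{E}(\mathbf{w})\|<\epsilon_1$, the local inverse of $\mathcal{E}|_{U_{\mathbf{x}^{*}}}$ yields a unique preimage $\mathbf{z}^{*}_{\mathbf{x}^{*}}\in U_{\mathbf{x}^{*}}\subset B(\mathbf{x}^{*};\delta)$. The pairwise disjointness of the balls $B(\mathbf{x}^{*};\delta)$, which is built into the hypothesis on $\delta$, then guarantees that each $\mathbf{x}^{*}\in V(H_i)$ admits exactly one $\mathbf{z}^{*}\in V(\hat{H}_i)\cap B(\mathbf{x}^{*};\delta)$, establishing Part~2. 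Moreover, every such $\mathbf{z}^{*}_{\mathbf{x}^{*}}$ sits inside $U_{\mathbf{x}^{*}}$ where $\mathcal{E}$ is a local biholomorphism, so it is automatically a regular point of $\mathcal{E}$.

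To finish Part~1, I need to rule out any additional root of $\hat{H}_i$ lying outside $\bigcup_{\mathbf{x}^{*}}U_{\mathbf{x}^{*}}$, after possibly shrinking $\epsilon(\delta)$ below $\epsilon_1$. The plan is to appeal to the continuity of roots of polynomial systems: passing to the projective closure, the perturbation $\mathcal{E}(\mathbf{x})-\mathbf{u}=0$ only alters the constant term, so the solutions at infinity and the total Bezout count are preserved. Because Lemma~\ref{lem:EPSWellBeh} says that the $k\cdot N_i!$ affine solutions of $H_i=0$ are all simple, none can split under perturbation and no new affine root can migrate in from infinity when $\mathbf{u}$ is close enough to $\mathcal{E}(\mathbf{w})$. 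Hence, for some $\epsilon_2\in(0,\epsilon_1]$, $|V(\hat{H}_i)|=k\cdot N_i!$, every root of $\hat{H}_i$ lies in some $U_{\mathbf{x}^{*}}$ and is therefore regular, and setting $\epsilon(\delta):=\epsilon_2$ closes the argument.

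The hard part is this last step. A naive attempt would just invoke the IFT ``globally'', but that only produces local existence around already-known roots; the genuine work is excluding a spurious root that could drift in from far away or from infinity under perturbation. I would carry this out either through the projective-closure/Bezout bookkeeping sketched above, or equivalently through a Puiseux-series description of how the roots of the one-parameter family $\{\mathcal{E}(\mathbf{x})-\mathbf{u}=0\}$ vary continuously with $\mathbf{u}$.
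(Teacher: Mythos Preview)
Your approach via the inverse function theorem is exactly what the paper does; its entire proof is the one sentence preceding the lemma (``Using the regularity of solutions of the EPS \ldots\ the inverse function theorem then gives the following result''). Your write-up is more careful, and you rightly flag that local IFT at each $\mathbf{x}^{*}$ does not by itself yield Part~1. However, you are working harder than necessary, and slightly misreading what the lemma actually asserts.

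For Part~1, recall from the construction of $\mathcal{R}_i$ in Appendix~\ref{sec: Apdx RPEPS} that $\mathcal{R}_i\subset\mathcal{C}^2=\mathcal{E}^{-1}(U)$, where $U$ is the Zariski-open (hence Euclidean-open) set of regular values furnished by Theorem~\ref{thm:RegValZarOpen}. Since $\mathbf{w}\in\mathcal{R}_i$, we have $\mathcal{E}(\mathbf{w})\in U$, and openness of $U$ gives an $\epsilon_0>0$ with $B(\mathcal{E}(\mathbf{w});\epsilon_0)\subset U$. For any $\mathbf{u}$ in this ball, \emph{every} preimage under $\mathcal{E}$, wherever it sits, is automatically a regular point. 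This delivers Part~1 outright, with no need to track roots at infinity or invoke B\'ezout counts.

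For Part~2, note that the statement only claims exactly one perturbed root inside each $B(\mathbf{x}^{*};\delta)$; it does \emph{not} claim $|V(\hat{H}_i)|=|V(H_i)|$ or that no root lies outside $\bigcup_{\mathbf{x}^{*}}B(\mathbf{x}^{*};\delta)$. So your ``spurious root drifting in from far away'' concern is not needed here either. Existence of one root per ball is IFT. For uniqueness within $B(\mathbf{x}^{*};\delta)$ (which may be larger than $U_{\mathbf{x}^{*}}$), a short compactness argument suffices: if along $\mathbf{u}_n\to\mathcal{E}(\mathbf{w})$ there were two distinct preimages in $\overline{B(\mathbf{x}^{*};\delta)}$, subsequential limits would lie in $V(H_i)\cap\overline{B(\mathbf{x}^{*};\delta)}=\{\mathbf{x}^{*}\}$, and near $\mathbf{x}^{*}$ the map $\mathcal{E}$ is injective. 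Your projective/B\'ezout route is valid but heavier than the lemma requires.
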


As a consequence, we have the following.
\begin{lemma}
\label{lem:ExistL}
Let $\delta$ and $\epsilon(\delta)$ be as described in Lemma~\ref{lem:NoiEPSWellBeh}. Then for tolerable failure rate $\kappa > 0$ and the chosen set $\tau,$ $\exists L_{\tau, \delta, \kappa} \in \mathbb{Z}_{++}$ such that if $L \geq L_{\tau, \delta, \kappa},$ then with probability greater than $1 - \kappa,$ we have $|| (T_\tau)^{-1} \hat{\mathbf{c}}_{\tau, L} - \mathcal{E}(\mathbf{w}) || < \epsilon(\delta).$
\end{lemma}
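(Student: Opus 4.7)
The plan is to reduce everything to concentration of the empirical MGF estimates $\hat{M}_{Y_i}(t_k; L)$ around their true values $M_{Y_i}(t_k)$, and then propagate this through the fixed linear map $(T_\tau)^{-1}$.

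First I would recall the key identity that emerged in the derivation of the EPS, namely $\mathcal{E}(\mathbf{w}) = (T_\tau)^{-1} \mathbf{c}_\tau$, so that
\[
\bigl\| (T_\tau)^{-1} \hat{\mathbf{c}}_{\tau, L} - \mathcal{E}(\mathbf{w}) \bigr\| = \bigl\| (T_\tau)^{-1} (\hat{\mathbf{c}}_{\tau, L} - \mathbf{c}_\tau) \bigr\| \leq \bigl\| (T_\tau)^{-1} \bigr\|_{\mathrm{op}} \cdot \bigl\| \hat{\mathbf{c}}_{\tau, L} - \mathbf{c}_\tau \bigr\|.
\]
The operator norm is a finite constant depending only on $\tau$ (well defined by Lemma~\ref{lem:genTtauMatrix}), so it suffices to make $\|\hat{\mathbf{c}}_{\tau, L} - \mathbf{c}_\tau\|$ arbitrarily small with high probability.

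Next I would examine each coordinate: $\hat{c}(t_k; L) - c(t_k) = (\lambda_{d+1} + t_k)^{N_i} \bigl( \hat{M}_{Y_i}(t_k; L) - M_{Y_i}(t_k) \bigr)$. The prefactor $(\lambda_{d+1} + t_k)^{N_i}$ is a deterministic constant depending only on $\tau$. For the random factor, observe that $\exp(-t_k Y_{il}) \in (0, 1]$ almost surely since $t_k > 0$ and $Y_{il} \geq 0$, so $\hat{M}_{Y_i}(t_k; L)$ is an average of IID bounded random variables with mean $M_{Y_i}(t_k)$. I would invoke Hoeffding's inequality to get, for any $\eta > 0$,
\[
\mathbb{P}\bigl( |\hat{M}_{Y_i}(t_k; L) - M_{Y_i}(t_k)| > \eta \bigr) \leq 2 \exp(-2L \eta^2).
\]

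Then I would set $\eta$ small enough so that if all $d \cdot N_i$ coordinates deviate by at most $\eta$, the resulting Euclidean norm $\|\hat{\mathbf{c}}_{\tau, L} - \mathbf{c}_\tau\|$ is below $\epsilon(\delta) / \|(T_\tau)^{-1}\|_{\mathrm{op}}$. Concretely, taking $\eta = \epsilon(\delta) / \bigl(\sqrt{d N_i} \cdot \|(T_\tau)^{-1}\|_{\mathrm{op}} \cdot \max_k (\lambda_{d+1}+t_k)^{N_i}\bigr)$ suffices. A union bound over $k \in [d \cdot N_i]$ then yields
\[
\mathbb{P}\bigl( \|(T_\tau)^{-1} \hat{\mathbf{c}}_{\tau,L} - \mathcal{E}(\mathbf{w})\| \geq \epsilon(\delta) \bigr) \leq 2 \, d \, N_i \, \exp(-2 L \eta^2),
\]
and choosing $L_{\tau,\delta,\kappa}$ large enough to drive the right-hand side below $\kappa$ finishes the argument. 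There is no real obstacle here; the only care needed is to track that all constants entering $\eta$ (i.e., $\|(T_\tau)^{-1}\|_{\mathrm{op}}$ and the scaling factors $(\lambda_{d+1}+t_k)^{N_i}$) depend only on $\tau$ and the fixed stage parameters, not on $L$, so that the resulting threshold $L_{\tau,\delta,\kappa}$ is well defined as claimed. If one preferred to avoid Hoeffding, the weak law of large numbers applied coordinatewise gives the same existence statement without the explicit exponential rate.
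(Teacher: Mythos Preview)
Your proposal is correct and follows essentially the same approach as the paper: both rely on the boundedness $\exp(-t_k Y_{il}) \in [0,1]$, apply Hoeffding's inequality to control $|\hat{M}_{Y_i}(t_k;L) - M_{Y_i}(t_k)|$, and use the identity $\mathcal{E}(\mathbf{w}) = (T_\tau)^{-1}\mathbf{c}_\tau$. The paper's proof is a two-line sketch that declares the conclusion ``immediate'' after stating the Hoeffding bound; your version makes explicit the operator-norm step, the coordinatewise scaling by $(\lambda_{d+1}+t_k)^{N_i}$, and the union bound, which is exactly what is needed to fill in that word.
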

\begin{proof}
Note that $\exp(-t_k Y_{il}) \in [0,1]$ $\forall i, l$ and $k.$ The Hoeffding inequality (see \cite{Hoeffding63}) then shows that for any $\epsilon > 0$, $\Pr\{|\hat{M}_{Y_i}(t_k; L) - M_{Y_i}(t_k)| > \epsilon\} \leq \exp(-2 \epsilon^2 L).$ Since $\mathcal{E}(\mathbf{w}) = (T_\tau)^{-1}\mathbf{c}_\tau,$ the result is now immediate. \qquad \end{proof}

The above two results, in simple words, state that solving \eqref{eq:NoiGenElePolSys} for a large enough $L,$ with high probability, is almost as good as solving the EPS of \eqref{eq:genElePolSys}. For $L \geq L_{\tau, \delta, \kappa},$ let $\mathcal{A}_i(\kappa)$ denote the event $|| (T_\tau)^{-1} \hat{\mathbf{c}}_{\tau, L} - \mathcal{E}(\mathbf{w}) || < \epsilon(\delta).$ Clearly, $\Pr\{\mathcal{A}^{c}_i(\kappa)\} \leq \kappa.$ As in \eqref{eq:redSolSet}, let
\begin{equation}
\hat{\mathcal{M}}_i = \{\hat{\alpha} \in \mathbb{C}^d : \exists \mathbf{z}^{*} \in V(\hat{H}_i) \mbox{ with } \mathbf{z}_1^{*} = \alpha\}.
\end{equation}

We are now done discussing the EPS for an arbitrary $i \in [m].$ In summary, we have managed to obtain a set $\hat{\mathcal{M}}_i$ in which a close approximation of the weight vectors of random variables ${X_j}$ that add up to give $Y_i$ are present with high probability. The next subsection takes a unified view of the solution sets $\{\hat{\mathcal{M}}_i: i\in [m]\}$ to match the weight vectors to the corresponding random variables. But before that, we redefine $\mathcal{W}_i$ as $\{\mathbf{w}_j: j \in p_i\}.$ Accordingly, $\mathcal{M}_i, \hat{\mathcal{M}}_i, V(H_i)$ and $V(\hat{H}_i)$ are also redefined using notations of Section~\ref{sec:model}.

\subsection{Parameter matching using 1-identifiability}
\label{subsec:parMatch}

We begin by giving a physical interpretation for the $1-$identifiability condition of the matrix $A.$ For this, let $\mathcal{G}_{j} := \left\{ i\in[m]:j\in p_{i}\right\} $ and $\mathcal{B}_{j} := [m] \backslash \mathcal{G}_{j}.$
\begin{lemma}
  \label{lem:linkAsPathIntersection}
  For a $1-$identifiable matrix $A$, each index $j\in[N]$ satisfies
  \[
  \{j\} = \bigcap_{g\in\mathcal{G}_{j}} p_{g} \cap \bigcap_{b\in\mathcal{B}_{j}} p_{b}^{c} =: \mathcal{D}_{j}.
  \]
\end{lemma}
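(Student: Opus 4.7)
The proof reduces to recognizing that the set $\mathcal{D}_j$ is exactly the collection of column indices whose columns in $A$ coincide with the $j$-th column, and that $1$-identifiability rules out any such coincidence other than $j$ itself.

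First I would rewrite $\mathcal{D}_j$ in a more transparent form. By the definitions of $\mathcal{G}_j$ and $\mathcal{B}_j$, an index $k \in [N]$ belongs to $\mathcal{D}_j$ if and only if for every $i \in [m]$ with $j \in p_i$ we have $k \in p_i$, and for every $i \in [m]$ with $j \notin p_i$ we have $k \notin p_i$. Equivalently, $k \in \mathcal{D}_j$ iff for every $i \in [m]$, $a_{ik} = 1 \iff a_{ij} = 1$, i.e., the $k$-th and $j$-th columns of $A$ are identical as elements of $\{0,1\}^m$.

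The inclusion $\{j\} \subseteq \mathcal{D}_j$ is then immediate: for every $g \in \mathcal{G}_j$ we have $j \in p_g$ by the very definition of $\mathcal{G}_j$, and for every $b \in \mathcal{B}_j$ we have $j \notin p_b$ by the definition of $\mathcal{B}_j$.

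For the reverse inclusion $\mathcal{D}_j \subseteq \{j\}$, suppose for contradiction that some $k \in \mathcal{D}_j$ satisfies $k \neq j$. By the reformulation above, the columns of $A$ indexed by $j$ and $k$ are equal as $\{0,1\}^m$ vectors. These two columns form a set of $2 = 2 \cdot 1$ columns, and they are linearly dependent (two equal nonzero vectors are dependent; note that neither column is the zero vector, since $\{0,v\}$ is itself a linearly dependent pair forbidden by $1$-identifiability). This contradicts the hypothesis that $A$ is $1$-identifiable. Hence $k = j$, proving $\mathcal{D}_j = \{j\}$.

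There is no real obstacle here; the only point worth pausing on is the translation of the intersection defining $\mathcal{D}_j$ into the column-equality statement, after which $1$-identifiability does all the work.
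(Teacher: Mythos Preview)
Your proof is correct and follows the same approach as the paper: show $j\in\mathcal{D}_j$ by definition, then argue that any $k\neq j$ in $\mathcal{D}_j$ would force the $j$-th and $k$-th columns of $A$ to coincide, contradicting $1$-identifiability. The paper's version is more terse (it omits the explicit translation of $\mathcal{D}_j$ into the column-equality condition and the remark about zero columns), but the logic is identical.
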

\begin{proof}
By definition, $j \in \mathcal{D}_{j}.$ For converse, if $k\in\mathcal{D}_{j},$ $k \neq j,$ then columns $j$ and $k$ of $A$ are identical; contradicting its $1-$identifiability condition. Thus $\{j\}=\mathcal{D}_{j}.$ \qquad \end{proof}

An immediate result is the following.

\begin{corollary}
  \label{cor:uniqueValuePathValueIntersection}
  Suppose $A$ is a $1-$identifiable matrix. If the map $u: [N] \rightarrow X,$ where $X$ is an arbitrary set, is bijective and $\forall i \in [m],$ $v_{i} := \{u(j):j\in p_{i}\},$ then for each $j \in [N]$
  \[
  \{u(j)\} =  \bigcap_{g\in\mathcal{G}_{j}} v_{g} \cap \bigcap_{b\in\mathcal{B}_{j}} v_{b}^{c}
  \]
\end{corollary}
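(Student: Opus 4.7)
The plan is to derive the corollary as an immediate consequence of Lemma \ref{lem:linkAsPathIntersection} by transporting the set equality there through the bijection $u$. Lemma \ref{lem:linkAsPathIntersection} gives the identity $\{j\} = \bigcap_{g \in \mathcal{G}_j} p_g \cap \bigcap_{b \in \mathcal{B}_j} p_b^c$, so my strategy is simply to apply $u$ to both sides and simplify.

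The two set-theoretic facts I would invoke are: (i) any injective map commutes with intersections in its domain, so $u\bigl(\bigcap_\alpha S_\alpha\bigr) = \bigcap_\alpha u(S_\alpha)$; and (ii) any bijection $u : [N] \to X$ commutes with complements, in the sense that $u(S^c) = (u(S))^c$, where the left-hand complement is taken in $[N]$ and the right-hand one in $X$. The latter uses injectivity (to get $u([N] \setminus S) = u([N]) \setminus u(S)$) together with surjectivity onto $X$ (to replace $u([N])$ by $X$). Combined with the defining identity $v_i = u(p_i)$, these facts let me rewrite $u(p_g) = v_g$ and $u(p_b^c) = v_b^c$ inside the expression.

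Stringing everything together yields
\[
\{u(j)\} = u(\{j\}) = u\!\left(\bigcap_{g \in \mathcal{G}_j} p_g \cap \bigcap_{b \in \mathcal{B}_j} p_b^c\right) = \bigcap_{g \in \mathcal{G}_j} v_g \cap \bigcap_{b \in \mathcal{B}_j} v_b^c,
\]
which is precisely the desired identity. I do not anticipate any genuine obstacle: once Lemma \ref{lem:linkAsPathIntersection} is in hand, the argument is purely formal, and the only subtlety is being careful about which universe the complements are taken in, which is resolved by bijectivity of $u$.
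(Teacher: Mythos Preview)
Your argument is correct and is exactly the ``reframing'' the paper has in mind: it states the corollary immediately after Lemma~\ref{lem:linkAsPathIntersection} with no further proof, so transporting that lemma through the bijection $u$ is precisely the intended step. Your care about the universe in which complements are taken is the only point that needs attention, and you handle it correctly.
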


By reframing this, we get the following result.

\begin{theorem}
  \label{thm:idlCasParMch}
Suppose $A$ is a $1-$identifiable matrix. If the weight vectors $\mathbf{w}_1, \ldots, \mathbf{w}_N$ are pairwise distinct, then the rule
  \begin{equation}
    \psi:j \rightarrow \bigcap_{g\in\mathcal{G}_{j}} \mathcal{W}_{g} \cap \bigcap_{b\in\mathcal{B}_{j}} \mathcal{W}_{b}^{c},
    \label{eq:idlCasAssRule}
  \end{equation}
satisfies $\psi(j) = \mathbf{w}_{j}$.
\end{theorem}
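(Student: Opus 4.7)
The plan is to observe that this theorem is essentially a direct rephrasing of Corollary \ref{cor:uniqueValuePathValueIntersection} once we make the right identification. Specifically, I would set $X$ to be the collection $\{\mathbf{w}_1, \ldots, \mathbf{w}_N\}$ of weight vectors and define the map $u:[N] \to X$ by $u(j) := \mathbf{w}_j$. Under this identification, the sets $v_i := \{u(j) : j \in p_i\}$ in the corollary coincide precisely with the sets $\mathcal{W}_i = \{\mathbf{w}_j : j \in p_i\}$ that appear in the statement of the theorem.

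The only hypothesis of the corollary that requires verification is the bijectivity of $u$. Surjectivity onto its image is automatic, and injectivity is exactly the assumption that the weight vectors $\mathbf{w}_1, \ldots, \mathbf{w}_N$ are pairwise distinct. With $u$ confirmed bijective and $A$ assumed $1$-identifiable, the corollary then immediately yields
\[
\{u(j)\} \;=\; \bigcap_{g \in \mathcal{G}_j} v_g \,\cap\, \bigcap_{b \in \mathcal{B}_j} v_b^c \;=\; \bigcap_{g \in \mathcal{G}_j} \mathcal{W}_g \,\cap\, \bigcap_{b \in \mathcal{B}_j} \mathcal{W}_b^c \;=\; \psi(j).
\]
Since $u(j) = \mathbf{w}_j$ by construction, this gives $\psi(j) = \mathbf{w}_j$, completing the argument.

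There is really no hard step here, since all the combinatorial work was already packaged into Lemma \ref{lem:linkAsPathIntersection} and its corollary. The only point worth flagging is the need for the pairwise-distinctness hypothesis: without it, $u$ would fail to be injective and the corollary would not apply, so one cannot hope to separate the weight vectors using only set-theoretic intersections over $\mathcal{G}_j$ and $\mathcal{B}_j$. In the broader scheme, this theorem is the bridge that lets us convert the unordered solution sets $\mathcal{M}_i$ (or their empirical counterparts $\hat{\mathcal{M}}_i$) produced by the EPS back into an ordered assignment of weight vectors to individual random variables $X_j$.
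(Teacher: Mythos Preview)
Your proof is correct and follows exactly the route the paper takes: the theorem is stated immediately after Corollary~\ref{cor:uniqueValuePathValueIntersection} with the words ``By reframing this, we get the following result,'' and your argument is precisely that reframing. The only (harmless) slip is notational: the corollary yields $\psi(j) = \{\mathbf{w}_j\}$ as a singleton set, which the paper and you both tacitly identify with the element $\mathbf{w}_j$.
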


This result is where the complete potential of the $1-$ identifiability condition of $A$ is being truly taken advantage of. What this states is that if we had access to the collection of sets $\{\mathcal{W}_i: i \in [m]\},$ then using $\psi$ we would have been able to uniquely match the weight vectors to the random variables. But note that, at present, we have access only to the collection  $\{\mathcal{M}_i: i \in [m]\}$ in the ideal case and $\{\hat{\mathcal{M}}_i: i \in [m]\}$ in the perturbed case. In spite of this, we now show that if $\forall i_1, i_2 \in [m],$ $i_1 \neq i_2,$
\begin{equation} \label{eq:inValSolNotRel}
\mathcal{I}_{i_1} \cap \mathcal{M}_{i_2} = \emptyset, 
\end{equation}
a condition that always held in simulation experiments, then the rules:
\begin{equation}
  \label{eq:idlCasGenAssRule}
	\psi:j \rightarrow \bigcap_{g\in\mathcal{G}_{j}} \mathcal{M}_{g} \cap \bigcap_{b\in\mathcal{B}_{j}} \mathcal{M}_{b}^{c}
\end{equation}
for the ideal case, and
\begin{equation}
  \label{eq:PerCasGenAssRule}
	\hat{\psi}:j \rightarrow \bigcap_{g\in\mathcal{G}_{j}} \hat{\mathcal{M}}_{g} \cap \bigcap_{b\in\mathcal{B}_{j}} \hat{\mathcal{M}}_{b}^{c}
\end{equation}
in the perturbed case, with minor modifications recover the correct weight vector associated to each random variable $X_j.$

We first discuss the ideal case. Let $\mathcal{S}:=\{j \in [N]: |\mathcal{G}_j| \geq 2\}.$ Because of \eqref{eq:inValSolNotRel} and Theorem~\ref{thm:idlCasParMch}, note that
\begin{enumerate}
\item If $j \in \mathcal{S},$  then $\psi(j) = \{\mathbf{w}_j\}.$

\item If $j \in \mathcal{S}^{c}$ and $j \in p_{i^{*}},$ then $\psi(j) = \{\mathbf{w}_j\} \cup \mathcal{I}_{i^*}.$
\end{enumerate}
That is, \eqref{eq:idlCasGenAssRule} works perfectly fine when $j \in \mathcal{S}.$ The problem arises only when $j \in \mathcal{S}^{c}$ as $\psi(j)$ does not give as output a unique vector. To correct this, fix $j \in \mathcal{S}^{c}.$ If $j \in p_{i^*},$ then let $\mathbf{v}^{sub} \equiv (\mathbf{w}_k : k \in p_{i^*} \backslash \{j\}).$ Because of $1-$identifiability, note that if $k \in p_{i^*}\backslash \{j\},$ then $k \in \mathcal{S}.$ From \eqref{eq:tempGenPoly} and \eqref{eq:tempPolSys}, it is also clear that $(\mathbf{v}^{sub}, \alpha) \in V(H_i)$ if and only if $\alpha = \mathbf{w}_j.$ This suggests that we need to match parameters in two stages. In stage 1, we use \eqref{eq:idlCasGenAssRule} to assign weight vectors to all those random variables $X_j$ such that $j \in \mathcal{S}.$ In stage 2, for each $j \in \mathcal{S}^{c},$ we identify $i^* \in [m]$ such that $j \in p_{i^*}.$  We then construct $\mathbf{v}^{sub}.$ We then assign to $j$ that unique $\alpha$ for which $(\mathbf{v}^{sub}, \alpha) \in V(H_{i^*}).$ Note that we are ignoring the trivial case where $|p_{i^*}| = 1.$ It is now clear that by using \eqref{eq:idlCasGenAssRule} with modifications as described above, at least for the ideal case, we can uniquely recover back for each random variable $X_j$ its corresponding weight vector $\mathbf{w}_j.$

We next handle the case of noisy measurements. Let $\mathcal{U}:= \cup_{i \in [m]} \mathcal{M}_i$ and $\hat{\mathcal{U}} := \cup_{i \in [m]} \hat{\mathcal{M}}_i.$ Observe that using \eqref{eq:PerCasGenAssRule} directly, with probability one, will satisfy $\hat{\psi}(j) = \emptyset$ for each $j \in [N].$ This happens because we are distinguishing across the solution sets the estimates obtained for a particular weight vector. Hence as a first step we need to define a relation $\sim$ on $\hat{\mathcal{U}}$ that associates these related elements. Recall from Lemmas~\ref{lem:NoiEPSWellBeh} and \ref{lem:ExistL} that the set $\hat{\mathcal{M}}_i$ can be constructed for any small enough choice of $\delta, \kappa > 0.$ With choice of $\delta$ that satisfies
\begin{equation}
\label{eq:delCond}
0 < 4\delta < \underset{\alpha, \beta \in \mathcal{U}}{\min} ||\alpha - \beta||,
\end{equation}
let us consider the event $\mathcal{A} : = \cap_{i \in [m]} \mathcal{A}_i(\kappa/m).$  Using a simple union bound, it follows that $\Pr\{\mathcal{A}^{c}\} \leq \kappa.$ Now suppose that the event $\mathcal{A}$ is a success. Then by \eqref{eq:delCond} and Lemma~\ref{lem:NoiEPSWellBeh}, the following observations follow trivially.

\begin{enumerate}
\item For each $i \in [m]$ and each $\alpha \in \mathcal{M}_i,$ there exists at least one $\hat{\alpha} \in \hat{\mathcal{M}}_i$ such that $||\hat{\alpha} - \alpha|| < \delta.$

\item For each $i \in [m]$ and each $\hat{\alpha} \in \hat{\mathcal{M}}_i,$ there exists precisely one $\alpha \in \mathcal{M}_i$ such that $||\hat{\alpha} - \alpha|| < \delta.$

\item Suppose for distinct elements $\alpha, \beta \in \mathcal{U},$ we have $\hat{\alpha}, \hat{\beta} \in \hat{\mathcal{U}}$ such that $||\hat{\alpha} - \alpha|| < \delta$ and $||\hat{\beta} - \beta|| < \delta.$ Then $||\hat{\alpha} - \hat{\beta}|| > 2\delta.$
\end{enumerate}

From these, it is clear that the relation $\sim$ on $\hat{\mathcal{U}}$ should be
\begin{equation}
\label{eq:Rel}
  \hat{\alpha} \sim \hat{\beta} \mbox{ iff } ||\hat{\alpha} - \hat{\beta}|| < 2\delta.
\end{equation}
It is also easy to see that, whenever the event $\mathcal{A}$ is a success, $\sim$ defines an equivalence relation on $\hat{\mathcal{U}}.$ For each $i \in [m],$ the obvious idea then is to replace each element of $\hat{\mathcal{M}}_i$ and its corresponding $d-$ dimensional component in $V(\hat{H}_i)$ with its equivalence class. It now follows that \eqref{eq:PerCasGenAssRule}, with modifications as was done for the ideal case, will satisfy
\begin{equation}
\hat{\psi}(j) = \{\hat{\alpha} \in \hat{\mathcal{U}}:||\hat{\alpha}- \mathbf{w}_j|| < \delta\}.
\end{equation}
This is obviously the best we could have done starting from the set $\{\hat{\mathcal{M}}_i: i \in [m]\}.$

We end this section by summarizing our complete method in an algorithmic fashion. 

\begin{algorithm} \textbf{\large Distribution tomography}
\label{alg:EstLinkDis}

\noindent \textit{\large Phase 1: Construct \& Solve the EPS.}

For each $i \in [m],$
\begin{enumerate}
\item Choose an arbitrary $\tau = \{t_1, \ldots, t_{d \cdot N_i}\}$ of distinct positive real numbers.
\item Pick a large enough $L \in \mathbb{Z}_{++}.$ Set $\hat{M}_{Y_{i}}(t_{j}) = \left(\sum_{l=1}^{L}\exp(-t_jY_{il})\right)/L,$ $\hat{\mu}_{i}(t_{j})=(\lambda_{d+1}+t_{j})^{N_{i}}\hat{M}_{Y_{i}}(t_{j})$ and $\hat{c}(t_{j})=\hat{\mu}_{i}(t_{j}) - \lambda_{d+1}^{N_{i}}$ for each $j \in [N_i].$  Using this, construct $\hat{c}_{\tau}\equiv(\hat{c}(t_{1}),\ldots,\hat{c}(t_{d\cdot N_{i}})).$
\item Solve $\mathcal{E}(\mathbf{x})-T_{\tau}^{-1}\hat{c}_{\tau}=0$ using any standard solver for polynomial systems.
\item Build $\hat{\mathcal{M}}_i = \{\alpha \in \mathbb{C}^d : \exists \mathbf{x}^* \in V(\hat{H}_i) \mbox{ with } \mathbf{x}_1^* = \alpha \}.$
\end{enumerate}

\noindent \textit{\large Phase 2: Parameter Matching}

\begin{enumerate}
\item Set $\hat{\mathcal{U}}:=\bigcup_{i\in[m]}\hat{\mathcal{M}}_{i}.$ Choose $\delta>0$ small enough and define the relation $\sim$ on $\mathcal{\hat{U}},$ where $\hat{\alpha} \sim \hat{\beta}$ if and only if $||\hat{\alpha} - \hat{\beta}||_{2} < 2\delta.$ If $\sim$ is not an equivalence relation, then choose a smaller $\delta$ and repeat.
\item Construct the quotient set $\hat{\mathcal{U}}\backslash\sim.$ Replace all elements of each $\hat{\mathcal{M}_{i}}$ and each $V(\hat{H}_{i})$ with their equivalence class.
\item For each $j\in\mathcal{S},$ set $\hat{\psi}(j)=\left(\bigcap_{g\in\mathcal{G}_{j}}\mathcal{\hat{M}}_{g}\right)\cap\left(\bigcap_{b\in\mathcal{B}_{j}}\hat{\mathcal{M}}_{b}^{c}\right).$
\item For each $j\in\mathcal{S}^{c},$

\begin{enumerate}
\item Set $i^{*} = i\in[m]$ such that $j\in p_{i^{*}}.$
\item Construct $\mathbf{v}^{sub} \equiv(\hat{\psi}(k):k\in p_{i^{*}}, k\ne j\}.$
\item Set $\hat{\psi}(j)=\hat{\alpha}$ such that $(\mathbf{v}^{sub},\hat{\alpha})\in V(\hat{H}_{i}).$

\end{enumerate}
\end{enumerate}
\end{algorithm}

\section{Universality}
\label{sec:universality}
The crucial step in the DT scheme described above was to come up with, for each $i \in [m],$ a well behaved polynomial system, i.e., one that satisfies the properties of Lemma~\ref{lem:EPSWellBeh}, based solely on the samples of $Y_i.$ Once that was done, the ability to match parameters to the component random variables was only a consequence of the $1-$identifiability condition of the matrix $A.$ This suggests that it may be possible to develop similar schemes even in settings different to the ones assumed in Section~\ref{sec:model}. In fact, functions other than the MGF could also serve as blueprints for constructing the polynomial system. We discuss in brief few of these ideas in this section. Note that we are making a preference for polynomial systems for the sole reason that there exist computationally efficient algorithms, see for example \cite{Sommese05, Li97, Morgan89, Verschelde99}, to determine all its roots.

Consider the case, where $\forall j \in [N],$ the distribution of $X_j$ is the finite mixture model 
\begin{equation} \label{eq:genMixDist}
F_j(u) = \sum_{k = 1}^{d_j + 1}w_{jk}\phi_{jk}(u),
\end{equation}
where $d_j \in \mathbb{Z}_{++},$ $w_{j1}, \ldots, w_{j(d_l + 1)}$ denote the mixing weights, i.e., $w_{jk} \geq 0$ and $\sum_{k = 1}^{d_j + 1}w_{jk} = 1,$ and $\{\phi_{jk}(u)\}$ are some basis functions, say Gaussian, uniform, etc. The MGF of each $X_j$ is clearly given by
\begin{equation}
M_{X_j}(t) = \sum_{k = 1}^{d_j + 1}w_{jk}\int_{u = 0}^{\infty}\exp(-ut)d\phi_{jk}(u).
\end{equation}
Now note that if the basis functions $\{\phi_{jk}\}$ are completely known, then the MGF of each $Y_i$ will again be a polynomial in the mixing weights, $\{w_{jk}\},$ similar in spirit to the relation of \eqref{eq:MGFRel}. As a result, the complete recipe of Section~\ref{sec:DistTomoSc} can again be attempted to estimate the weight vectors of the random variables $X_j$ using only the IID samples of each $Y_i.$

In relation to \eqref{eq:Dist} or \eqref{eq:genMixDist}, observe next that $\forall n \in \mathbb{Z}_{++},$ the $n^{th}$ moment of each $X_j$ is given by
\begin{equation}
\mathbb{E}(X_j^n) = \sum_{k = 1}^{d_j + 1}w_{jk}\int_{u = 0}^{\infty}u^n d\phi_{jk}(u).
\end{equation}
Hence, the $n^{th}$ moment of $Y_i$ is again a polynomial in the unknown weights. This suggests that, instead of the MGF, one could use the estimates of the moments of $Y_i$ to come up with an alternative polynomial system and consequently solve for the distribution of each $X_j.$

Moving away from the models of \eqref{eq:Dist} and \eqref{eq:genMixDist}, suppose that for each $j \in [N],$ $X_j \sim \exp(m_j).$ Assume that each mean $m_j < \infty$ and that  $m_{j_{1}} \neq m_{j_{2}}$ when $j_{1}\neq j_{2}.$ We claim that the basic idea of our method can be used here to estimate $m_1, \ldots, m_N$ and hence the complete distribution of each $X_j$ using only the samples of $Y_i.$ As the steps are quite similar when either i) we know $M_{Y_i}(t)$ for each $i \in [m]$ and every valid $t$ and ii) we have access only to the IID samples $\{Y_{il}\}_{l \geq 1}$ for each $i \in [m]$, we take up only the first case.

Fix $i \in [m]$ and let $p_i := \{j \in [N]: a_{ij} = 1\}.$ To simplify notations, let us relabel the random variables $\{X_j: j \in p_i\}$ that add up to give $Y_i$ as $X_1, \ldots, X_{N_i},$ where $N_i = |p_i|.$ Observe that the MGF of $Y_i,$ after inversion, satisfies
\begin{equation}
  \prod_{j=1}^{N_{i}} (1 + t m_{j}) = 1/M_{Y_{i}}(t).
  \label{eq:momGenRel}
\end{equation}
Using \eqref{eq:momGenRel}, we can then define the canonical polynomial
\begin{equation}
  f(\mathbf{x};t):=\prod_{j=1}^{N_{i}}(1+tx_{j})-c(t), \label{eq:canPol}
\end{equation}
where $\mathbf{x} \equiv (x_{1},\ldots,x_{N_{i}})$ and $c(t) = 1/M_{Y_{i}}(t).$ Now choose an arbitrary set $\tau = \{t_{1},\ldots,t_{N_{i}}\} \subset \mathbb{R}_{++}$ consisting of distinct numbers and define
\begin{equation}
  F_{\tau}( \mathbf{x}) \equiv (f_{1}(\mathbf{x}), \ldots,  f_{N_{i}}(\mathbf{x})) = 0, \label{eq:IntmdPolySys}
\end{equation}
where $f_{k}(\mathbf{x})=f(\mathbf{x};t_{k}).$ We emphasize that this system is square of size $N_{i},$ depends on the choice of subset $\tau$ and each polynomial $f_{k}$ is symmetric with respect to the variables $x_{1,}\ldots,x_{N_{i}}.$ In fact, if we let $\mathbf{c}_{\tau}\equiv(c(t_{1}),\ldots,c(t_{N_{i}}))$ and $\mathcal{E} (\mathbf{x}) \equiv (e_{1}(\mathbf{x}), \ldots, e_{N_{i}} (\mathbf{x})),$ where $e_{k}(\mathbf{x})=\sum_{1\leq j_{1}<j_{2}<\ldots<j_{k}\leq N_{i}}x_{j_{1}}\cdots x_{j_{k}}$ denotes the $k^{th}$ elementary symmetric polynomial in the $N_{i}$ variables $x_{1,}\ldots,x_{N_{i}},$ we can rewrite \eqref{eq:IntmdPolySys} as
\begin{equation}
  T_{\tau} \mathcal{E}(\mathbf{x}) - (\mathbf{c}_{\tau} - \mathbf{1}) = 0. \label{eq:firstSymPolSys}
\end{equation}
Here $T_{\tau}$ denotes a Vandermonde matrix of order $N_{i}$ in $t_{1},\ldots,t_{N_{i}}$ with $(T_{\tau})_{jk}=t_{j}^{k}.$ Its determinant, given by $\det(T_{\tau}) = \left(\prod_{j=1}^{n} t_{j} \right) \prod_{j>i}(t_{j}-t_{i}),$ is clearly non-zero. Premultiplying \eqref{eq:firstSymPolSys} by $T_{\tau}^{-1},$ we have
\begin{equation}
  \mathcal{E}(\mathbf{x}) - T_{\tau}^{-1}(\mathbf{c}_{\tau} -   \mathbf{1}) = 0. \label{eq:secSymPolSys}
\end{equation}
Observe now that the vector $\mathbf{m} \equiv (m_{1},\ldots,m_{N_{i}})$ is a natural root of \eqref{eq:IntmdPolySys} and hence of \eqref{eq:secSymPolSys}. Hence $T_{\tau}^{-1}(\mathbf{c}_{\tau} - \mathbf{1}) = \mathcal{E}(\mathbf{m}).$  The EPS for this case can thus be written as
\begin{equation}
  H_{i}(\mathbf{x}) \equiv \mathcal{E}(\mathbf{x}) - \mathcal{E}(\mathbf{m}) = 0. \label{eq:charPolSys}
\end{equation}

We next discuss the properties of this EPS, or more specifically, its solution set. For this, let $V(H_{i}):=\{\mathbf{x} \in \mathbb{C}^{N_i} : H_{i}(\mathbf{x}) = 0\}.$
\begin{lemma} \label{lem:EPSSolSet}
	$V(H_{i})=\pi_{\mathbf{m}}:= \{\sigma(\mathbf{m}): \sigma \in S_{N_{i}}\}.$
\end{lemma}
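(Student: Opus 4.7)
The plan is to prove the two set inclusions separately, relying on the fact that $\mathcal{E}(\mathbf{x}) = (e_1(\mathbf{x}), \ldots, e_{N_i}(\mathbf{x}))$ is the vector of elementary symmetric polynomials and on the assumed distinctness of $m_1, \ldots, m_{N_i}$.

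The inclusion $\pi_{\mathbf{m}} \subseteq V(H_i)$ is immediate: since every $e_k$ is by definition invariant under the action of $S_{N_i}$ on the coordinates of its argument, we have $\mathcal{E}(\sigma(\mathbf{m})) = \mathcal{E}(\mathbf{m})$ for every $\sigma \in S_{N_i}$, so every permutation of $\mathbf{m}$ lies in $V(H_i)$.

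The reverse inclusion $V(H_i) \subseteq \pi_{\mathbf{m}}$ is where the work happens, and the natural tool is Vieta's formulas. Given any $\mathbf{x}^{*} = (x_1^{*}, \ldots, x_{N_i}^{*}) \in V(H_i)$, I would consider the two monic polynomials
\begin{equation*}
P(z) = \prod_{j=1}^{N_i}(z - x_j^{*}), \qquad Q(z) = \prod_{j=1}^{N_i}(z - m_j)
\end{equation*}
of degree $N_i$ in an auxiliary indeterminate $z$. Expanding both as $z^{N_i} + \sum_{k=1}^{N_i}(-1)^k e_k(\cdot) z^{N_i - k}$ shows that their coefficients coincide exactly when $e_k(\mathbf{x}^{*}) = e_k(\mathbf{m})$ for every $k \in [N_i]$, which is precisely the statement $H_i(\mathbf{x}^{*}) = 0$. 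Hence $P \equiv Q$. Since $m_1, \ldots, m_{N_i}$ are distinct by hypothesis, $Q$ has exactly $N_i$ distinct simple roots, so the same is true of $P$, and the multiset $\{x_1^{*}, \ldots, x_{N_i}^{*}\}$ equals $\{m_1, \ldots, m_{N_i}\}$. Thus $\mathbf{x}^{*}$ is a permutation of $\mathbf{m}$, i.e. $\mathbf{x}^{*} \in \pi_{\mathbf{m}}$.

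There is no serious obstacle in this argument; the only thing to be a little careful about is that $V(H_i)$ is defined over $\mathbb{C}^{N_i}$, so one must invoke Vieta's formulas in $\mathbb{C}[z]$ (where of course they remain valid) rather than only in $\mathbb{R}[z]$, and the distinctness of the $m_j$ is essential to rule out solutions in which some $x_j^{*}$'s coincide and others are repeated versions of $m_j$'s with altered multiplicity. Together the two inclusions give $V(H_i) = \pi_{\mathbf{m}}$.
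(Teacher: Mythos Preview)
Your argument is correct and is precisely the standard Vieta reasoning that the paper's one-line proof (``This follows directly from \eqref{eq:charPolSys}'') takes for granted; you have simply made explicit what the author leaves implicit. One minor remark: the distinctness of the $m_j$ is not actually needed for the inclusion $V(H_i)\subseteq\pi_{\mathbf m}$, since equality of the monic polynomials $P$ and $Q$ already forces equality of the multisets of their roots, and hence $\mathbf{x}^*$ is a permutation of $\mathbf{m}$ regardless; distinctness only matters for the separate fact that $|\pi_{\mathbf m}|=N_i!$.
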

\begin{proof} This follows directly from \eqref{eq:charPolSys}. \qquad \end{proof}

\begin{lemma} \label{lem:EPSRegSol}
	For every $\mathbf{x}^{*}\in V(H_{i}),$ $\det(\dot{\mathcal{E}}(\mathbf{x^{*}})) \neq 0.$ \label{prop:regSol}
\end{lemma}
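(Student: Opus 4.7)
The plan is to reduce the claim to the classical fact that the Jacobian of the Vieta map (roots to elementary symmetric polynomials in those roots) is nonsingular exactly at points with pairwise distinct coordinates. By Lemma~\ref{lem:EPSSolSet}, every $\mathbf{x}^{*} \in V(H_i)$ is a permutation of $\mathbf{m} = (m_1,\ldots,m_{N_i})$, and the standing hypothesis that the means $m_j$ are pairwise distinct guarantees that the coordinates of $\mathbf{x}^{*}$ are pairwise distinct. Hence it is enough to prove that $\det \dot{\mathcal{E}}(\mathbf{x})$ factors as (a nonzero constant times) the Vandermonde product $\prod_{1 \leq j < k \leq N_i}(x_k - x_j)$.

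First I would compute the entries of $\dot{\mathcal{E}}(\mathbf{x})$ explicitly. Differentiating the generating-function identity $\prod_{j=1}^{N_i}(1 + t x_j) = \sum_{k=0}^{N_i} e_k(\mathbf{x}) t^k$ with respect to $x_j$ yields $\partial e_k/\partial x_j = e_{k-1}(\mathbf{x}^{(j)})$, where $\mathbf{x}^{(j)}$ denotes $\mathbf{x}$ with its $j$-th coordinate removed. So $\dot{\mathcal{E}}(\mathbf{x})$ is the $N_i \times N_i$ matrix whose $(k,j)$-entry is $e_{k-1}(\mathbf{x}^{(j)})$.

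Next I would argue that $\det \dot{\mathcal{E}}(\mathbf{x})$ equals $\pm \prod_{j<k}(x_k - x_j)$. The slickest route is the alternating-polynomial argument: swapping variables $x_j$ and $x_l$ permutes the corresponding columns of $\dot{\mathcal{E}}$ (since every $e_k$ is symmetric), so $\det \dot{\mathcal{E}}(\mathbf{x})$ is an alternating polynomial in $x_1,\ldots,x_{N_i}$ and is therefore divisible by the Vandermonde $\prod_{j<k}(x_k - x_j)$. A degree count (each column $j$ contributes a polynomial of total degree at most $N_i - 1$, giving total degree $\binom{N_i}{2}$, exactly the degree of the Vandermonde) forces $\det \dot{\mathcal{E}}(\mathbf{x}) = c \prod_{j<k}(x_k - x_j)$ for a scalar $c \in \mathbb{C}$. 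The constant $c$ is then pinned down by comparing the coefficient of, say, $x_2 x_3^2 \cdots x_{N_i}^{N_i - 1}$ on both sides, or more simply by evaluating $\det \dot{\mathcal{E}}$ at $(0, 1, 2, \ldots, N_i - 1)$, which gives $c = \pm 1$.

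The conclusion is then immediate: the coordinates of $\mathbf{x}^{*}$ are a permutation of the distinct numbers $m_1, \ldots, m_{N_i}$, so the Vandermonde product is nonzero at $\mathbf{x}^{*}$, and hence $\det \dot{\mathcal{E}}(\mathbf{x}^{*}) \neq 0$. The only mildly delicate step is the identification of the Jacobian determinant with the Vandermonde; the alternating-polynomial plus degree-count argument keeps this short and avoids any column operations. An alternative (more conceptual) phrasing is that $\mathcal{E}$ is the Vieta map, which is an $N_i!$-fold branched cover of $\mathbb{C}^{N_i}$ with branch locus precisely $\{x_j = x_k \text{ for some } j \neq k\}$; since $\mathbf{x}^{*}$ lies off this locus, the map is étale there and $\dot{\mathcal{E}}(\mathbf{x}^{*})$ is invertible.
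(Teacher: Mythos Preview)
Your proposal is correct and follows the same approach as the paper: both reduce the claim to the identity $\det(\dot{\mathcal{E}}(\mathbf{x})) = \prod_{1 \le j < k \le N_i}(x_j - x_k)$ (up to sign), combined with the pairwise distinctness of the $m_j$ and Lemma~\ref{lem:EPSSolSet}. The paper simply asserts this Vandermonde-type formula as a known fact, whereas you supply the alternating-polynomial plus degree-count justification---more detail, but the same idea.
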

\begin{proof} This follows from the fact that $\det(\dot{\mathcal{E}}(\mathbf{x}))=\prod_{1\leq j<k\leq N_{i}}(x_{j}-x_{k}).$ \qquad \end{proof}

Because of Lemma~\ref{lem:EPSSolSet}, it suffices to work with only the first components of the roots. Hence we define
\begin{equation}
  \mathcal{M}_{i}:=\{\alpha^{*}\in\mathbb{C}: \exists \mathbf{x}^{*} \in V(H_i) \mbox{ with } \mathbf{x}^*_{1} = \alpha \},
\end{equation}
which in this case is equivalent to the set $\{m_1, \ldots, m_{N_i} \}.$ Reverting back to global notations, note that
\begin{equation}
  \mathcal{M}_{i}=\{m_j : j \in p_i\}. \label{eq:finPathSolSet}
\end{equation}

Since $i$ was arbitrary, we can repeat the above procedure to obtain the collection of solution sets $\{\mathcal{M}_i: i \in [m]\}.$ Arguing as in Theorem~\ref{thm:idlCasParMch}, it is now follows that if $A$ is $1-$identifiable, then the rule
\begin{equation}
\psi(j) = \bigcap_{g \in \mathcal{G}_j}\mathcal{M}_g \cap \bigcap_{b \in \mathcal{B}_j} \mathcal{M}_b^{c},
\end{equation}
where $\mathcal{G}_j = \{i \in [m]: j \in p_i\}$ and $\mathcal{B}_j = [m]\backslash\mathcal{G}_j,$ satisfies the relation $\psi(j) = m_j.$  That is, having obtained the sets $\{\mathcal{M}_i : i \in [m]\},$ one can use $\psi$ to match the parameters to the corresponding random variables.

This clearly demonstrates that even if a transformation of the MGF is a polynomial in the parameters to be estimated, our method may be applicable.

\section{Experimental Results} \label{sec:ExptResults}
We assess the performance of our DT scheme using matlab based simulation experiments. 

We consider the simplified network delay tomography setup wherein, given a sequence of end-to-end measurements of delay a probe packets experiences across a subset of paths in a network, we are required to estimate the delay distribution across each link. In particular, we suppose that the topology of the network is known a priori in the form of its path-link matrix, denoted $A \in \{0,1\}^{m \times N},$ and is unvarying during the measurement phase. The rows of $A$ correspond to the paths across which probe packets can be transmitted and its delay measured, while the columns correspond to individual links. Further, the element $a_{ij}$ is $1$ precisely when the $j^{th}$ link is present on the $i^{th}$ path. We let $X_j,$ a GH random variable, denote the probe packet delay across link $j$ and $Y_i$ the delay across path $i.$ We assume that the delay across different links are independent. If we let $X \equiv (X_1, \ldots, X_N)$ and $Y \equiv (Y_1, \ldots, Y_m),$ then observe that $Y = AX.$ Clearly, this setup now resembles the model of Section~\ref{sec:model}.

We simulate the networks given in Figure~\ref{fig:simNet1}. The specifics of each experiment and observations made are described next. Note that, unless specified otherwise, all values are rounded to 2 significant digits. 

\begin{figure}[t]
\begin{center}
\subfigure[Tree topology]
{
	\label{fig:tree}
  \includegraphics[width=0.35\columnwidth]{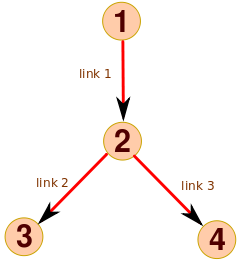}
}
\subfigure[General topology]
{
	\label{fig:general}
  \includegraphics[width=0.35\columnwidth]{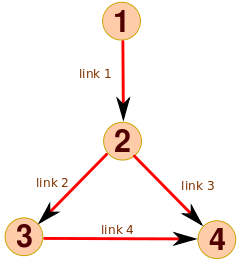}
}
\end{center} \label{fig:simNet1}
\caption{Networks for simulation experiments.}
\end{figure}

\subsection{Network with Tree Topology}
\label{subsec:treeTopology}
We work here with the network of Figure ~\ref{fig:tree}. Node $1$ is the source node, while nodes $3$ and $4$ act as sink. Path $p_1$ connects the nodes $1, 2$ and $3,$ while path $p_2$ connects the nodes $1, 2$ and $4.$ The path-link matrix is thus given by
\[
A = \left (
\begin{array}{ccc}
1 & 1 & 0\\

1 & 0 & 1\\
\end{array}
 \right).
\]

\begin{experiment} \label{exp:expt1}
We set the count of exponential stages in each link distribution to three. That is, we set $d = 2.$ We take the corresponding exponential stage parameters $\lambda_1, \lambda_2$ and $\lambda_3$ to be $5, 3$ and $1$ respectively. For each link, the weight associated with each exponential stage is set as given in the columns labeled $w_{j1}, \ldots, w_{j3}$ of Table~\ref{tab:expt1}.

\begin{table}[ht]
\caption{\label{tab:expt1}Actual and Estimated weights for Expt.~\ref{exp:expt1}}
\centering
\begin{tabular}{| c | c  c  c | c  c  c |}
\hline
Link & $w_{j1}$ & $w_{j2}$ & $w_{j3}$ & $\hat{w}_{j1}$ & $\hat{w}_{j2}$ & $\hat{w}_{j3}$\\
\hline 
1 & 0.17  & 0.80 & 0.03 &  0.15  & 0.82 & 0.02\\
 
2 & 0.13  & 0.47 & 0.40 &  0.15  & 0.46 & 0.39\\

3 & 0.80  & 0.15 & 0.05 &  0.79  & 0.15 & 0.06\\
\hline 
\end{tabular}
\vspace{0.1cm}
\end{table}

We first focus on path $p_1.$ Observe that its EPS is given by the map of \eqref{eq:egEPS}. We collect now a million samples of its end-to-end delay. Choosing an arbitrary set $\tau = \{1.9857, 2.3782, 0.3581, 8.8619\},$ we run the first phase of Algorithm~\ref{alg:EstLinkDis} to obtain $\hat{\mathcal{M}}_1.$ This set along with its ideal counterpart is given in Table~\ref{tab:expt11}.

\begin{table}[ht]
\caption{\label{tab:expt11} Solution set of the EPS for path $p_1$ in Expt.~\ref{exp:expt1}}
\centering
\begin{tabular}{| c | c  c |}
\hline
sol-ID & $\mathcal{M}_1$ & $\hat{\mathcal{M}}_1$\\
\hline 
1 & (0.1300, 0.4700)  & (0.1542, 0.4558)\\
 
2 & (0.1700, 0.8000)  & (0.1292, 0.8356)\\

3 & (3.8304, -2.8410)  & (3.8525, -2.8646)\\

4 & (0.1933, 0.7768)  & (0.2260, 0.7394)\\

5 & (0.1143, 0.4840)  & (0.0882, 0.5152)\\

6 & (0.0058, -0.1323)  & (0.0052, -0.1330)\\
\hline 
\end{tabular}
\vspace{0.1cm}
\end{table}

Similarly, by probing the path $p_2$ with another million samples and with $\tau = \{0.0842, 0.0870, 0.0305, 0.0344\},$ we determine $\hat{\mathcal{M}}_2.$ The sets $\mathcal{M}_2$ and $\hat{\mathcal{M}}_2$ are given in Table~\ref{tab:expt12}.

\begin{table}[ht]
\caption{\label{tab:expt12} Solution set of the EPS for path $p_2$ in Expt.~\ref{exp:expt1}}
\centering
\begin{tabular}{| c | c  c |}
\hline
sol-ID & $\mathcal{M}_2$ & $\hat{\mathcal{M}}_2$\\
\hline 
1 & (0.8000, 0.1500)  & (0.7933, 0.1459)\\
 
2 & (0.1660, 0.7775)  & (0.1720, 0.8095)\\

3 & (5.5623, -4.5638)  & (5.5573, -4.5584)\\

4 & (0.1700, 0.8000)  & (0.1645, 0.7669)\\

5 & (0.8191, 0.1543)  & (0.8296, 0.1540)\\

6 & (0.0245, -0.0263)  & (0.0246, -0.0259)\\
\hline 
\end{tabular}
\vspace{0.1cm}
\end{table}

To match the weight vectors to corresponding links, firstly observe that the minimum distance between $\hat{\mathcal{M}}_1$ and $\hat{\mathcal{M}}_2$ is 0.0502. Based on this, we choose $\delta = 0.03$ and run the second phase of Algorithm~\ref{alg:EstLinkDis}. The obtained results are given in the second half of Table~\ref{tab:expt1}. Note that the weights obtained for the first link are determined by taking a simple average of the solutions obtained from the two different paths. The norm of the error vector is $0.0443.$
\end{experiment}

\begin{experiment}
\label{exp:expt2}
Keeping other things unchanged as in the setup of experiment~\ref{exp:expt1}, we consider here four exponential stages in the distribution of each $X_j.$ The exponential stage parameters $\lambda_1, \lambda_2, \lambda_3$ and $\lambda_4$ equal $5, 4, 0.005$ and $1$ respectively. The corresponding weights are given in Table~\ref{tab:expt2}. But observe that the weights of the third stage is negligible for all three links. Because of this, we ignore its presence completely. That is, we consider $d = 2$ and then run Algorithm~\ref{alg:EstLinkDis}. The results obtained are given in the second half of Table~\ref{tab:expt2}. The norm of the error vector is $0.1843.$

This clearly demonstrates that the exponential stages, which are insignificant across all link distributions, can be ignored. 

\begin{table}[ht]
\caption{\label{tab:expt2}Actual and Estimated weights for Expt.~\ref{exp:expt2}}
\begin{center}
\begin{tabular}{|c|c c c c|c c c c|}
\hline 
Link & $w_{j1}$ & $w_{j2}$ & $w_{j3}$ & $w_{j4}$ & $\hat{w}_{j1}$ & $\hat{w}_{j2}$ & $\hat{w}_{j3}$ & $\hat{w}_{j4}$\tabularnewline
\hline 
1 & 0.71  & 0.20 & 0.0010 & 0.08 & 0.77  & 0.20 & 0 & 0.03\\

2 & 0.41  & 0.17 & 0.0015 & 0.41 & 0.38  & 0.18 & 0 & 0.44\\

3 & 0.15  & 0.80 & 0.0002 & 0.04 & 0.12  & 0.70 & 0 & 0.18\\
\hline 
\end{tabular}
\end{center}
\vspace{0.1cm}
\end{table}
\end{experiment}

\subsection{Network with General Topology}
\label{subsec:genTopology}

We deal here with the network of Figure~\ref{fig:general}. Nodes $1$ and $2$ act as source while nodes $3$ and $4$ act as sink. We consider here three paths. Path $p_1$ connects the nodes $1, 2$ and $3,$ path $p_2$ connects the nodes $1, 2$ and $4,$ while path $p_3$ connects the nodes $2, 3$ and $4.$ The path-link matrix is 
\[
A = \left(
\begin{array}{cccc}
1 & 1 & 0 & 0\\
1 & 0 & 1 & 0\\
0 & 1 & 0 & 1\\
\end{array}
\right).
\]
\begin{experiment}
\label{exp:expt3}
The values of $d, \lambda_1, \lambda_2, \lambda_3$ are set as in Experiment~\ref{exp:expt1}. By choosing again a million probe packets for each path, we run Algorithm~\ref{alg:EstLinkDis}. The actual and estimated weights are shown in Table~\ref{tab:expt3}.

\begin{table}[h!]
\caption{\label{tab:expt3}Actual and Estimated weights for Expt.~\ref{exp:expt3}}
\begin{center}
\begin{tabular}{|c|c c c| c c c|}
\hline 
Link & $w_{j1}$ & $w_{j2}$ & $w_{j3}$ & $\hat{w}_{j1}$ & $\hat{w}_{j2}$ & $\hat{w}_{j3}$\tabularnewline
\hline 
1 & 0.34  & 0.26 & 0.40 & 0.34  & 0.24 & 0.42\\

2 & 0.46  & 0.49 & 0.05 & 0.45  & 0.50 & 0.05\\

3 & 0.12  & 0.65 & 0.23 & 0.11  & 0.68 & 0.21\\

4 & 0.71  & 0.19 & 0.10 & 0.69  & 0.18 & 0.13\\
\hline 
\end{tabular}
\end{center}
\vspace{0.1cm}
\end{table}

The ease with which our algorithm can handle even networks that have non-tree topologies is clearly demonstrated in this experiment. 

\end{experiment}

\section{Discussion} \label{sec:Disc}
This paper took advantage of the properties of polynomial systems to develop a novel algorithm for the GNT problem. For an arbitrary matrix $A,$ which is $1-$identifiable, it demonstrated successfully how to accurately estimate the distribution of the random vector $X,$ with mutually independent components, using only IID samples of the components of the random vector $Y = AX.$ Translating to network terminology, this means that one can now address the tomography problem even for networks with arbitrary topologies using only pure unicast probe packet measurements. The fact that we need only the IID samples of the components of $Y$ shows that the processes to acquire these samples across different paths can be asynchronous. Another nice feature of this approach is that it can estimate the unknown link level performance parameters even when no prior information is available about the same.

\begin{appendix}
\section{Nonsingularity of coefficient matrix} \label{sec: Apdx NonSingOfCMat}
In this section, we prove three results which will together demonstrate the validity of Lemma~\ref{lem:genTtauMatrix}.

\begin{lemma} \label{lem:matWNonSingular}
Let $\omega_1, \omega_2 \in \mathbb{Z}_{++}.$ Further suppose that $\lambda_{1},\lambda_{2}$ and $t_{1},\ldots,t_{\omega_{1}+\omega_{2}}$ are strictly positive real numbers satisfying $\lambda_{1} \neq \lambda_{2}$ and $t_{k_{1}}\ne t_{k_{2}},$ if $k_{1}\ne k_{2}.$ Then the square matrix
\[
W=\left(\left.\begin{array}{ccc}
\frac{1}{(\lambda_{1}+t_{1})^{\omega_{1}}} & \ldots & \frac{1}{(\lambda_{1}+t_{1})}\\
\vdots & \ddots & \vdots\\
\frac{1}{(\lambda_{1}+t_{\omega_{1} + \omega_{2}})^{\omega_{1}}} & \ldots & \frac{1}{(\lambda_{1}+t_{\omega_{1} + \omega_{2}})}
\end{array}\right|\begin{array}{ccc}
\frac{1}{(\lambda_{2}+t_{1})^{\omega_{2}}} & \ldots & \frac{1}{(\lambda_{2}+t_{2})}\\
\vdots & \ddots & \vdots\\
\frac{1}{(\lambda_{2}+t_{\omega_{1} + \omega_{2}})^{\omega_{2}}} & \ldots & \frac{1}{(\lambda_{2}+t_{\omega_{1}+\omega_{2}})}
\end{array}\right)
\]
is non-singular. 
\end{lemma}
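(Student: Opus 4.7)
The plan is to establish non-singularity by showing that $Wv=0$ forces $v=0$ via an argument on rational functions. Write any null vector as $v=(a_{\omega_1},\ldots,a_1,b_{\omega_2},\ldots,b_1)^{T}$ so that the equation $Wv=0$ translates into the statement that the rational function
\begin{equation*}
R(t)=\sum_{j=1}^{\omega_1}\frac{a_j}{(\lambda_1+t)^{j}}+\sum_{j=1}^{\omega_2}\frac{b_j}{(\lambda_2+t)^{j}}
\end{equation*}
vanishes at each of the $\omega_1+\omega_2$ distinct points $t_1,\ldots,t_{\omega_1+\omega_2}$. Since $\lambda_1,\lambda_2,t_k>0$, none of these points are poles of $R$, so they are genuine zeros.

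Next, I would clear denominators and write $R(t)=P(t)/\bigl[(\lambda_1+t)^{\omega_1}(\lambda_2+t)^{\omega_2}\bigr]$, where
\begin{equation*}
P(t)=\sum_{j=1}^{\omega_1}a_j(\lambda_1+t)^{\omega_1-j}(\lambda_2+t)^{\omega_2}+\sum_{j=1}^{\omega_2}b_j(\lambda_1+t)^{\omega_1}(\lambda_2+t)^{\omega_2-j}.
\end{equation*}
Each summand has degree at most $\omega_1+\omega_2-1$ (since $j\geq 1$ in both sums), so $\deg P\leq\omega_1+\omega_2-1$. Because $R$ vanishes at the $\omega_1+\omega_2$ distinct points $t_k$, so does $P$. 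A polynomial of degree at most $\omega_1+\omega_2-1$ with $\omega_1+\omega_2$ distinct zeros must be identically zero; hence $P\equiv 0$ and therefore $R\equiv 0$.

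Finally, I would invoke the uniqueness of the partial-fraction decomposition of the zero rational function with respect to the distinct poles $-\lambda_1$ and $-\lambda_2$ (this is exactly where the hypothesis $\lambda_1\ne\lambda_2$ is used): the representation of $0$ as a sum of proper partial fractions at $-\lambda_1$ and $-\lambda_2$ is unique, so $a_j=0$ for all $j\in[\omega_1]$ and $b_j=0$ for all $j\in[\omega_2]$. This forces $v=0$ and establishes non-singularity of $W$.

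The only subtle point is the degree count on $P$ and the appeal to partial-fraction uniqueness; both are routine but it is important to observe that the strict inequalities $j\geq 1$ on both sides are exactly what keeps $\deg P$ strictly below $\omega_1+\omega_2$, which is what makes the pigeonhole on zeros conclusive. No other obstacle is anticipated.
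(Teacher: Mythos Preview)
Your argument is correct and considerably more direct than the paper's. You reduce $Wv=0$ to the vanishing of a rational function $R(t)$ at $\omega_1+\omega_2$ distinct nodes, clear denominators to obtain a polynomial $P$ of degree at most $\omega_1+\omega_2-1$, and then invoke partial-fraction uniqueness (using $\lambda_1\neq\lambda_2$) to kill all coefficients. Each step is sound; the degree bound and the distinctness of the $t_k$ are exactly what is needed.

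The paper takes a very different route: it multiplies row $j$ by $(\lambda_1+t_j)^{\omega_1}(\lambda_2+t_j)^{\omega_2}$ to pass to a polynomial matrix $B$, then treats $\det(B)$ as a univariate polynomial in $\lambda_2$ and shows, via repeated column differentiation and a lengthy case analysis of the derivative tuples, that $\lambda_1$ is a root of multiplicity exactly $\omega_1\omega_2$ and that the degree is also $\omega_1\omega_2$. This yields the explicit closed form
\[
\det(W)=\Bigl(\prod_{j}\prod_{k=1}^{2}(\lambda_k+t_j)^{-\omega_k}\Bigr)(\lambda_2-\lambda_1)^{\omega_1\omega_2}\prod_{j_1<j_2}(t_{j_2}-t_{j_1}),
\]
from which non-singularity is immediate. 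Your null-space argument buys brevity and conceptual clarity for the two-block case; the paper's computation buys the exact determinant formula, which it then uses as the base case and inductive engine in the proof of the general $d$-block version (Lemma~\ref{lem:genMatWNonSingular}). If you wanted to carry your approach through to the full result, the same partial-fraction idea extends verbatim to $d$ distinct poles and would again bypass the determinant computation entirely.
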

\begin{proof}
It suffices to show that 
\begin{equation} \label{eq:detW}
\det(W) = \left(\prod_{j=1}^{\omega_{1}+\omega_{2}} \prod_{k = 1}^{2}\frac{\left(\lambda_{2}-\lambda_{1}\right)^{\omega_{1}\omega_{2}}}{\left(\lambda_{k} + t_{j}\right)^{\omega_{k}}}\right) \left(\prod_{1\leq j_{1}<j_{2}\leq(\omega_{1}+\omega_{2})}\left(t_{j_{2}}-t_{j_{1}}\right)\right).
\end{equation}

As a first step, we perform on $W$ the row operations
\[
\mbox{row}_{j}\leftarrow\mbox{row}_{j}\times\left(\lambda_{1}+t_{j}\right)^{\omega_{1}}\left(\lambda_{2}+t_{j}\right)^{\omega_{2}}
\]
for each $j \in [\omega_{1} + \omega_{2}]$ to get the matrix $B.$ Note that for $j, k \in [\omega_1 + \omega_2],$ 
\begin{equation} \label{eq:BMatDefn}
\left(B\right)_{jk} = 
\begin{cases}
\left(\lambda_{1}+t_{j}\right)^{k-1}\left(\lambda_{2}+t_{j}\right)^{\omega_{2}}, & k \in [\omega_1]\\
\\
\left(\lambda_{1}+t_{j}\right)^{\omega_{1}}\left(\lambda_{2}+t_{j}\right)^{k - 1 - \omega_{1}}, & \mbox{otherwise}.
\end{cases}
\end{equation}
From the properties of determinants, it follows that 
\[
\det(W)=\left(\prod_{j = 1}^{\omega_{1} + \omega_{2}}\frac{1}{\left(\lambda_{1}+t_{j}\right)^{\omega_{1}}\left(\lambda_{2}+t_{j}\right)^{\omega_{2}}}\right)\det(B).
\]
To verify \eqref{eq:detW}, it only remains to show that 
\begin{equation} \label{eq:detB}
\det(B)=(\lambda_{2}-\lambda_{1})^{\omega_{1}\omega_{2}}\prod_{1\leq j_{1}<j_{2}\leq(\omega_{1} + \omega_{2})}\left(t_{j_{2}}-t_{j_{1}}\right).
\end{equation}
Towards this, our approach is to treat $\lambda_{2}$ as a variable and the other indeterminates, i.e., $\lambda_{1},$ $t_{1},\ldots,$ $t_{\omega_{1} + \omega_{2}}$ as constants and show that $g(\lambda_2)$ is a univariate polynomial of degree $\omega_1\omega_2$ with $\lambda_1$ as its sole root with multiplicity $\omega_1\omega_2.$

We now introduce some notations. Let $b_{k}(\lambda_{2})$ denote the $k^{th}$ column of $B$ and $b_{k}^{r_{k}}(\lambda_{2})$ its element-wise $r_{k}^{th}$ derivative with respect to $\lambda_2.$ We use $g(\lambda_{2})$ and $g^{n}(\lambda_{2})$ to refer respectively to $\det(B)$ and its $n^{th}$ derivative with respect to $\lambda_2.$ Lastly, for $\omega, n \in \mathbb{Z}_{++},$ we use $\Delta_{\omega, n}$ to represent the set of non-negative integer valued vector solutions of the equation $\alpha_1 + \cdots + \alpha_\omega = n.$

We now prove \eqref{eq:detB} by equivalently showing the following:
\begin{enumerate}[i.]
\item $\left.g^n\left(\lambda_2\right)\right|_{\lambda_2 = \lambda_1} = 0$ for each $0 \leq n < \omega_1\omega_2.$
\item $\left.g^{\omega_1\omega_2}\left(\lambda_2\right)\right|_{\lambda_2 = \lambda_1} = (\omega_1\omega_2)! \prod_{1 \leq j_{1} <j_{2}\leq(\omega_{1} + \omega_{2})}\left(t_{j_{2}}-t_{j_{1}}\right) \neq 0.$
\item For all $n > \omega_1\omega_2,$ $g^n(\lambda_2) \equiv 0.$
\end{enumerate}

Firstly note that, for any $n \in \mathbb{Z}_{++},$
\begin{equation} \label{eq:rthDerDetPolinL2}
g^{n}(\lambda_{2})=\sum_{\Delta_{\omega_{1} + \omega_{2},n}} \frac{n!}{r_1! \cdots r_{\omega_1 + \omega_2}!}\det\left(\left[\begin{array}{ccc}
b_{1}^{r_{1}}(\lambda_{2}) & \cdots & b_{\omega_{1} + \omega_{2}}^{r_{\omega_{1} + \omega_{2}}}(\lambda_{2})\end{array}\right]\right).
\end{equation}
Our strategy is to deal with 
$\det\left(\left[\begin{array}{ccc}b_{1}^{r_{1}}(\lambda_{2}) & \cdots & b_{\omega_{1} + \omega_{2}}^{r_{\omega_{1} + \omega_{2}}}(\lambda_{2})\end{array}\right]\right) = : \mu(\lambda_{2};\mathbf{r})$ 
for every possible pattern of the tuple $\mathbf{r}\equiv\left(r_{1},\ldots,r_{\omega_{1} + \omega_{2}}\right) \in \mathbb{Z}_{+}^{\omega_{1} + \omega_{2}}.$ Although the different possibilities are huge in number, the following observations, which follow directly from (\ref{eq:BMatDefn}), will reveal that in almost all combinations of $\mathbf{r}$ either a column becomes itself zero or is a scalar multiple of another column. In fact there is only one unique pattern where the determinant will have to be actually evaluated. 

\begin{enumerate}[1.]
\item The column $b_{\omega_{1}+1}(\lambda_{2})$ is a constant with respect to $\lambda_{2}.$ This implies that if $r_{\omega_{1}+1} \geq 1,$ then $b_{\omega_{1}+1}^{r_{\omega_{1}+1}}(\lambda_{2}) \equiv \mathbf{0}$ and consequently $\mu(\lambda_{2};\mathbf{r}) \equiv 0$ irrespective of what values $r_{k},$ $k \neq \omega_{1}+1$ take. Hence we need to focus on only those tuples $\mathbf{r}$ where $r_{\omega_{1}+1}=0.$ 

\item Suppose that $2 \leq k \leq \omega_{2}$ and $r_{\omega_{1}+1},$ $r_{\omega_{1} + 2},\ldots,$ $r_{\omega_{1} + k - 1}=0.$ Then 
\[
b_{\omega_{1}+k}^{r_{\omega_{1}+k}}(\lambda_{2})\equiv
\begin{cases}
\mathbf{0}, & r_{\omega_{1}+k} \geq k \vspace{0.05in}\\

\left(\prod\limits_{j=1}^{r_{\omega_{1}+k}}\left(k-j\right)\right)b_{\omega_{1}+k-r_{\omega_{1}+k}}(\lambda_{2}), &1\leq r_{\omega_{1} + k} \leq k-1.
\end{cases}
\]
In other words, if $r_{\omega_{1}+1},$ $r_{\omega_{1}+2},$ $\ldots,$ $r_{\omega_{1}+k-1} = 0$ and $r_{\omega_{1}+k} > 0,$ then $\mu(\lambda_{2};\mathbf{r}) \equiv 0.$ An  inductive argument on $k$ then shows that we need to deal with only those tuples where $r_{\omega_{1} + 1} = \cdots = r_{\omega_{1} + \omega_{2}} = 0.$ For the next 2 observations, we assume that this condition explicitly holds for the tuple under consideration. 

\item For column $\omega_{1}$ note that if $r_{\omega_{1}} \geq \omega_{2} + 1,$ then $b_{\omega_{1}}^{r_{\omega_{1}}}(\lambda_{2}) \equiv \mathbf{0}.$ But when $r_{\omega_{1}} = \omega_{2},$ then 
$b_{\omega_{1}}^{r_{\omega_{1}}}(\lambda_{2}) \equiv \omega_{2}!\left((\lambda_{1}+t_{1})^{\omega_{1}-1}, \ldots, \left(\lambda_{1} + t_{\omega_{1} + \omega_{2}}\right)^{\omega_{1}-1}\right).$
On the other hand, when $\lambda_{2} = \lambda_{1},$
\[
\left.b_{\omega_{1}}^{r_{\omega_{1}}}(\lambda_{2})\right|_{\lambda_{2}=\lambda_{1}} = 
\begin{cases}
\left.b_{\omega_{1} + \omega_{2}}(\lambda_{2})\right|_{\lambda_{2}=\lambda_{1}}, &r_{\omega_{1}}=0 \vspace{0.1in}\\

\left.\left(\prod\limits_{j=0}^{r_{\omega_{1}}-1}\left(\omega_{2}-j\right)\right)b_{\omega_{1} + \omega_{2}-r_{\omega_{1}}}(\lambda_{2})\right|_{\lambda_{2}=\lambda_{1}}, & 1\leq r_{\omega_{1}}\leq \omega_{2}-1.
\end{cases}
\]
This implies that $\mu(\lambda_{2};\mathbf{r}) \equiv 0$ if $r_{\omega_{1}} \geq \omega_{2}+1,$ while $\left.\mu(\lambda_{2};\mathbf{r})\right|_{\lambda_{2}=\lambda_{1}} = 0$  if $0 \leq r_{\omega_{1}} \leq \omega_{2} - 1.$ For our purposes, it thus remains to investigate only those tuples where $r_{\omega_{1}} = \omega_{2}.$ 

\item The following holds whenever $\omega_1 > 1.$ Suppose that there exists $k \in [\omega_{1} - 1]$ such that $r_{\omega_{1}} = r_{\omega_{1}-1} = \cdots = r_{\omega_{1} - k + 1} = \omega_{2}.$ It is then immediate that $b_{\omega_{1} - k}^{r_{\omega_{1} - k}}(\lambda_{2})\equiv \omega_{2}!\left((\lambda_{1}+t_{1})^{\omega_{1} - 1 -k}, \ldots, \left(\lambda_{1} + t_{\omega_{1} + \omega_{2}}\right)^{\omega_{1} - 1 -k}\right)$ if $r_{\omega_1 - k} = \omega_2,$ while $b_{\omega_{1}-k}^{r_{\omega_{1}-k}}(\lambda_{2}) \equiv 0$ if $r_{\omega_{1}-k} \geq \omega_{2} + 1.$ For the case when $r_{\omega_1 - k} \in \{0\} \cup [\omega_2 - 1],$ consider the following subcases.

\begin{enumerate}[(a)]
\item $\omega_1 \leq \omega_2:$ Here note that 
\[
\left.b_{\omega_{1}-k}^{r_{\omega_{1}-k}}(\lambda_{2})\right|_{\lambda_{2}=\lambda_{1}} = 
\begin{cases}
\left.b_{\omega_{1} + \omega_{2} - k}(\lambda_{2})\right|_{\lambda_{2}=\lambda_{1}}, & r_{\omega_{1}-k}=0 \vspace{0.15in}\\

\begin{array}{c}
\left(\prod\limits_{j = 0}^{r_{(\omega_{1} - k)} - 1}\left(\omega_{2} - j\right)\right) \times \\  \left.b_{\omega_{1} + \omega_{2} - k - r_{\omega_{1} - k}}(\lambda_{2})\right|_{\lambda_{2} = \lambda_{1}}
\end{array}, & 1 \leq r_{\omega_{1} - k} < \omega_{2} - k \vspace{0.15in}\\

\frac{\left.b_{\omega_{1} + \omega_{2} - k - r_{\omega_{1} - k}}^{\omega_{2}}(\lambda_{2})\right|_{\lambda_{2} = \lambda_{1}}}{\left(\omega_{2} - r_{\omega_{1} - k}\right)!}, & \omega_{2} - k\leq r_{\omega_{1} - k} < \omega_{2}.
\end{cases}
\]

\item $\omega_1 > \omega_2:$ If $k < \omega_2,$ then $b^{r_{\omega_1 - k}}_{\omega_1 - k}$ behaves precisely as in the subcase above. On the other hand, when $\omega_2 \leq k \leq \omega_1 - 1$ then
\[
\left.b_{\omega_{1}-k}^{r_{\omega_{1}-k}}(\lambda_{2})\right|_{\lambda_{2}=\lambda_{1}} = \frac{\left.b_{\omega_{1} + \omega_{2} - k - r_{\omega_{1} - k}}^{\omega_{2}}(\lambda_{2})\right|_{\lambda_{2} = \lambda_{1}}}{\left(\omega_{2} - r_{\omega_{1} - k}\right)!}, \hspace{0.12in} 0 \leq r_{\omega_{1} - k} < \omega_{2}. 
\]
\end{enumerate}

As a consequence of a simple induction on $k,$ it now follows that $\left.\mu(\lambda_{2};\mathbf{r})\right|_{\lambda_{2} = \lambda_{1}} = 0$ for all tuples $\mathbf{r},$ where $0 \leq r_{1} < \omega_{2}$ and for each $2 \leq i \leq \omega_{1},$ $r_{i} \in \{0\} \cup [\omega_2].$ In contrast, even if one amongst $r_{1},$ $\ldots,$
$r_{\omega_{1}}$ has value strictly bigger than $\omega_{2},$ then $\mu(\lambda_{2};\mathbf{r}) \equiv 0.$ 

\item The above observations essentially show that $\mathbf{r}^{*},$ where $r_{1}^{*} = \cdots = r_{\omega_{1}}^{*}= \omega_{2}$ and $r_{\omega_{1}+1}^{*} = \cdots = r_{\omega_{1}+\omega_{2}}^{*}=0,$ is the only tuple for which $\mu(\lambda_{2};\mathbf{r}^*)$ is required to be explicitly evaluated. Using properties of Vandermonde matrices, a simple calculation show that 
\[
\left.\mu(\lambda_{2};\mathbf{r}^{*})\right|_{\lambda_{2}=\lambda_{1}} = (\omega_{2}!)^{\omega_{1}}\prod_{1\leq k_{1}<k_{2}\leq \omega_{1}+\omega_{2}}\left(t_{k_{2}}-t_{k_{1}}\right) \neq 0.
\]
\end{enumerate}

\noindent Let $\mathcal{A}_{\leq,\mathbf{0}} = \left\{ \mathbf{r} \equiv (\mathbf{s},\mathbf{0}) \in \mathbb{Z}_{+}^{\omega_{1}} \times \mathbb{Z}_{+}^{\omega_{2}}:\,0\leq s_{1} < \omega_{2} \mbox{ and }0 \leq s_{k} \leq \omega_{2} \hspace{0.1in} \forall 2 \leq k \leq \omega_1 \right\},$ 
$\mathcal{A}_{\mathbf{r}^{*}}=\left\{ \mathbf{r}^{*}\right\}$ and
$\mathcal{A}_{>} = \mathbb{Z}_{+}^{\omega_{1}+\omega_{2}}\backslash\left\{ \mathcal{A}_{\mathbf{r}^{*}}\cup\mathcal{A}_{<,\mathbf{0}}\right\}$ be a partition of $\mathbb{Z}_{+}^{\omega_{1}+\omega_{2}}.$ Then a short summary of the above observations is that 
\[
\mu(\lambda_{2};\mathbf{r})\bigr|_{\lambda_{2} = \lambda_{1}} = 
\begin{cases}
0, & \mathbf{r} \in \mathcal{A}_{\leq, \mathbf{0}} \vspace{0.1in}\\

\left.\mu(\lambda_{2};\mathbf{r}^{*})\right|_{\lambda_{2}=\lambda_{1}}, & \mathbf{r}\in\mathcal{A}_{\mathbf{r}^{*}},
\end{cases}
\]
while $\mathbf{r}\in\mathcal{A}_{>}$ implies that $\mu(\lambda_{2};\mathbf{r}) \equiv 0.$ This last conclusion implies that (\ref{eq:rthDerDetPolinL2}) can be explicitly written as
\[
g^{n}(\lambda_{2})=\sum_{\mathbf{r}\in\Delta_{\omega_{1}+\omega_{2},n}\cap\{\mathcal{A}_{\mathbf{r}^{*}}\cup\mathcal{A}_{\leq,\mathbf{0}}\}}\frac{n!}{r_1! \cdots r_{\omega_1 + \omega_2}!} \mu(\lambda_{2};\mathbf{r}).
\]
Now note that $\forall \mathbf{r} \in\mathcal{A}_{\leq,\mathbf{0}},$ $\sum_{k=1}^{\omega_{1}+\omega_{2}}r_{k}<\omega_{1}\omega_{2},$ while $\sum_{k=1}^{\omega_{1}+\omega_{2}}r_{k}^{*}=\omega_{1}\omega_{2}.$ This shows that $\Delta_{\omega_{1} + \omega_{2},n} \cap \mathcal{A}_{\leq, \mathbf{0}} = \emptyset$ for all $n \geq \omega_{1}\omega_{2},$ while $\Delta_{\omega_{1} + \omega_{2},n} \cap \mathcal{A}_{\mathbf{r}^{*}} = \emptyset$ for all $n\neq \omega_{1}\omega_{2}.$ 

It is now trivial to see that $\left.g^{n}(\lambda_{2})\right|_{\lambda_{2}=\lambda_{1}}=0$ $\forall 0 \leq n < \omega_{1}\omega_{2},$ $\left.g^{\omega_{1}\omega_{2}}(\lambda_{2})\right|_{\lambda_{2} = \lambda_{1}} = (\omega_{1}\omega_{2})!\prod_{1\leq k_{1}<k_{2}\leq \omega_{1}+\omega_{2}}\left(t_{k_{2}}-t_{k_{1}}\right)$ and $g^{n}(\lambda_{2}) \equiv 0$ for all $n\geq \omega_{1}\omega_{2} + 1.$ This establishes the desired result. \qquad
\end{proof}

The general version of the above result is the following.
\begin{lemma} \label{lem:genMatWNonSingular}
For a fixed integer $d\geq2,$ let $\omega_{1},$ $\ldots,$ $\omega_{d}$ be arbitrary natural numbers. Let $S_{0}=0$ and for $k \in [d],$ $S_{k}=S_{k - 1} + \omega_{k}.$ Further suppose that $\lambda_{1},$ $\ldots,$ $\lambda_{d}$ and $t_{1},$ $\ldots,$ $t_{S_{d}}$ are strictly positive real numbers satisfying $\lambda_{i_{1}}\neq\lambda_{i_{2}},$ if $i_{1}\neq i_{2}$ and $t_{j_{1}}\ne t_{j_{2}},$ if $j_{1}\ne j_{2}.$
Then the square matrix
\begin{equation}
W = \left(\left.\begin{array}{ccc}
\frac{1}{(\lambda_{1}+t_{1})^{\omega_{1}}} & \ldots & \frac{1}{(\lambda_{1}+t_{1})}\\
\vdots & \ddots & \vdots\\
\frac{1}{(\lambda_{1}+t_{S_{d}})^{\omega_{1}}} & \ldots & \frac{1}{(\lambda_{1}+t_{\mathcal{S}_{d}})}
\end{array}\right|\cdots\left|\begin{array}{ccc}
\frac{1}{(\lambda_{d}+t_{1})^{\omega_{d}}} & \ldots & \frac{1}{(\lambda_{d}+t_{1})}\\
\vdots & \ddots & \vdots\\
\frac{1}{(\lambda_{d}+t_{S_{d}})^{\omega_{d}}} & \ldots & \frac{1}{(\lambda_{d}+t_{S_{d}})}
\end{array}\right.\right)\label{eq:genMatW}
\end{equation}
is non-singular. 
\end{lemma}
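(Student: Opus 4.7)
The plan is to sidestep the intricate determinant computation used for the $d=2$ case of Lemma~\ref{lem:matWNonSingular} and instead argue by contradiction via linear independence of rational functions. I will suppose $W$ is singular and derive a contradiction using uniqueness of partial fractions. This generalizes cleanly and uniformly to arbitrary $d$, whereas a direct induction mimicking the $d=2$ proof would compound its combinatorial bookkeeping with each additional block.

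Concretely, I would assume $\det(W)=0$. Then the columns of $W$ are linearly dependent, so there exist scalars $\{c_{kq}: k\in[d],\, q\in[\omega_k]\}$, not all zero, such that
\[
R(t_j) \;:=\; \sum_{k=1}^{d}\sum_{q=1}^{\omega_k} \frac{c_{kq}}{(\lambda_k+t_j)^q} \;=\; 0 \qquad \forall\, j\in[S_d].
\]
I would then view $R$ as a rational function of $t$ and multiply it by $P(t):=\prod_{k=1}^{d}(\lambda_k+t)^{\omega_k}$, which has degree $S_d$. Each summand $c_{kq}\,P(t)/(\lambda_k+t)^q$ is a polynomial of degree $S_d-q\leq S_d-1$, so $P(t)R(t)$ is a polynomial of degree at most $S_d-1$. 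Since $\lambda_k>0$ and $t_j>0$, one has $P(t_j)\neq 0$, and hence $P(t_j)R(t_j)=0$ for every $j\in[S_d]$. A polynomial of degree at most $S_d-1$ with $S_d$ distinct roots is identically zero, so $P(t)R(t)\equiv 0$ and therefore $R(t)\equiv 0$ as a rational function.

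To close the argument, I would invoke uniqueness of the partial fraction decomposition: because the poles $-\lambda_1,\ldots,-\lambda_d$ are pairwise distinct, the identity $R(t)\equiv 0$ forces $c_{kq}=0$ for every admissible pair $(k,q)$, contradicting the assumed non-trivial dependence among the columns of $W$. I do not anticipate any substantive obstacle: the only point requiring care is the degree bound $\deg(PR)\leq S_d-1$, which follows at once because $q\geq 1$ in every term. If a closed-form for $\det(W)$ were desired, one could extract it from the $d=2$ formula \eqref{eq:detW} by an induction on $d$, but for non-singularity alone the partial-fractions route is the most direct and avoids evaluating any determinant explicitly.
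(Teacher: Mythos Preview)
Your argument is correct and takes a genuinely different route from the paper. The paper proceeds by induction on $d$: it first clears denominators via row scaling to obtain a polynomial matrix $B$, then treats $\det(B)$ as a univariate polynomial in $\lambda_d$ and, through a careful case analysis of column derivatives, shows that each $\lambda_m$ with $m<d$ is a root of multiplicity exactly $\omega_m\omega_d$ and that the degree is exactly $S_{d-1}\omega_d$; the induction hypothesis then identifies the leading coefficient and yields the closed form \eqref{eq:genDetA} for $\det(W)$. Your approach instead reads a putative column dependence as a rational identity $R(t_j)=0$ at $S_d$ distinct points, clears denominators to obtain a polynomial of degree at most $S_d-1$ with $S_d$ roots, and finishes with uniqueness of partial fractions. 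This is markedly shorter and avoids all of the combinatorial bookkeeping; the only thing you give up is the explicit determinant formula, which the paper derives but does not actually use downstream (Theorem~\ref{thm:gentTtauMatrix} invokes only non-singularity). One stylistic remark: your degree bound and the partial-fractions step are both completely standard, so the argument is self-contained as written; you might just make explicit that the poles $-\lambda_k$ are distinct precisely because the $\lambda_k$ are, since that is the sole hypothesis driving uniqueness.
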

\begin{proof}
It suffices to show that 
\begin{equation} \label{eq:genDetA}
\det(W) = \left(\prod_{j=1}^{S_{d}}\prod_{i=1}^{d}\frac{\prod\limits_{(i_{1},i_{2})\in\mathcal{I}_d}(\lambda_{i_{2}}-\lambda_{i_{1}})^{\omega_{i_{1}}\omega_{i_{2}}}}{\left(\lambda_{i}+t_{j}\right)^{\omega_{i}}}\right)\prod_{(j_{1},j_{2})\in\mathcal{J}}(t_{j_{2}}-t_{j_{1}}),
\end{equation}
where $\mathcal{I} _d = \left\{ (i_{1},i_{2}):\, i_{1},i_{2}\in[d],\, i_{1}<i_{2}\right\} $ and $\mathcal{J}=\left\{ (j_{1},j_{2}):\, j_{1},j_{2}\in[S_{d}],\, j_{1}<j_{2}\right\} .$

As a first step, we do the row operations
\[
\mbox{row}_{j}\leftarrow\mbox{row}_{j}\times\prod_{i=1}^{d}\left(\lambda_{i}+t_{j}\right)^{\omega_{i}}
\]
for each $j \in [S_{d}]$ on $W$ to get the matrix $B.$ For $j, k \in [S_d],$ note that 
\begin{equation} \label{eq:BStruct}
(B)_{jk} = 
\begin{cases}
\left\{ \prod\limits_{i\in[d]\backslash\{1\}}\left(\lambda_{i}+t_{j}\right)^{\omega_{i}}\right\} \left(\lambda_{1}+t_{j}\right)^{k- S_0 - 1}, & S_{0} + 1 \leq k \leq S_{1}\\
\vdots\\
\left\{\prod\limits_{i \in [d] \backslash \{d\}} \left(\lambda_{i} + t_{j}\right)^{\omega_{i}} \right\} \left(\lambda_{d} + t_{j}\right)^{k - S_{(d - 1)} - 1}, & S_{(d-1)} + 1 \leq k \leq S_{d}.
\end{cases}
\end{equation}
To verify \eqref{eq:genDetA}, it suffices to show that 
\begin{equation}\label{eq:genMatBDet}
\det(B)=\left(\prod_{(i_{1},i_{2})\in\mathcal{I}_d}(\lambda_{i_{2}}-\lambda_{i_{1}})^{\omega_{i_{1}}\omega_{i_{2}}}\right)\prod_{(j_{1},j_{2})\in\mathcal{J}}(t_{j_{2}}-t_{j_{1}}).
\end{equation}
For a fixed $d,$ let $\mathcal{C}_{d}$ denote the collection of matrices of the form similar to $B$ for all possible choices of $\omega_{1},$ $\ldots,$ $\omega_{d},$ $\lambda_{1},$ $\ldots,$ $\lambda_{d},$ $t_{1},$ $\ldots,$ $t_{S_{d}}$ satisfying the given conditions of the lemma. We will say $\mathcal{C}_{d}$ is non-singular if the determinant of each matrix in this collection is given by \eqref{eq:genMatBDet} and hence is non-zero. Now let $\mathcal{N}:=\left\{ d\in\mathbb{Z}_{++}:\mathcal{C}_{d}\mbox{ is non-singular}\right\} .$ In these notations, a claim equivalent to \eqref{eq:genMatBDet} is to show that if $d \geq 2,$ then $d \in \mathcal{N}.$ We prove this alternate claim using induction. 

From Lemma \ref{lem:matWNonSingular} it follows that $2 \in \mathcal{N}.$ We treat this as our base case. Let the induction hypothesis be that $d - 1 \in\mathcal{N}$ for some $d \geq 3$. To check if $d \in \mathcal{N},$ we verify whether \eqref{eq:genMatBDet} holds true for the determinant of an arbitrary matrix in $\mathcal{C}_{d}.$ For convenience, we reuse the symbol $B$ to denote this arbitrary matrix. In relation to $B,$ let $g(\lambda_{d}),$ $g^{n}(\lambda_{d}),$ $b_{k}(\lambda_{d}),$ $b_{k}^{r_{k}}(\lambda_{d}),$ $\mathbf{r} \equiv (r_{1},\ldots,r_{\mathcal{S}_{d}})\in\mathbb{Z}_{+}^{S_{d}},$ $\Delta_{S_{d},n}$ and $\mu(\lambda_{d};\mathbf{r})$ be defined as in the proof of Lemma~\ref{lem:matWNonSingular}. 

Our approach is similar in spirit to that used in verifying \eqref{eq:detB}. That is, we treat $\lambda_{d}$ as a variable and the other indeterminates as constants and, using the induction hypothesis, show that
\begin{enumerate}[i.]
\item $g(\lambda_d)$ is a univariate polynomial of degree $S_{d - 1}\omega_d$ and for each $m \in [d - 1],$ $\lambda_m$ is a root with multiplicity exactly $\omega_m \omega_d,$ i.e., $g(\lambda_d) = h \times \prod\limits_{i \in [d - 1]} (\lambda_d - \lambda_i)^{\omega_i\omega_d}.$

\item $h = \left(\prod\limits_{(i_{1},i_{2}) \in \mathcal{I}_{(d - 1)}}(\lambda_{i_{2}}-\lambda_{i_{1}})^{\omega_{i_{1}}\omega_{i_{2}}}\right) \times \left(\prod\limits_{(j_1, j_2) \in \mathcal{J}}(t_{j_2} - t_{j_1})\right).$
\end{enumerate}
To begin with, observe that 
\begin{equation} \label{eq:nthDetgenMat}
g^{n}(\lambda_{d}) = \sum_{\mathbf{r}\in\Delta_{S_{d},n}}\frac{n!}{r_1! \cdots r_{S_d}!}\mu(\lambda_{d};\mathbf{r}).
\end{equation}
Let us now fix an arbitrary $m \in [d - 1].$ Taking hints from the observations made in Lemma~\ref{lem:matWNonSingular}, we define $\mathbf{r}^{*}\in\mathbb{Z}_{+}^{S_{d}}$ by
\[
r_{k}^{*} = 
\begin{cases}
\omega_{d}, & S_{(m-1)} + 1 \leq k \leq S_{m} \vspace{0.15in}\\

0, & \mbox{otherwise}.
\end{cases}
\]
We next partition $\mathbb{Z}_{+}^{S_{d}}$ into the three sets $\mathcal{A}_{\leq,\mathbf{0}},$ $\mathcal{A}_{r^{*}},$ and $\mathcal{A}_{>}$ given respectively by
\begin{multline*}
\mathcal{A}_{\leq,\mathbf{0}} = \left\{\mathbf{r}:0 \leq r_{S_{(m-1)} + 1} < \omega_{d}; \quad \forall 2 \leq k_{1} \leq \omega_{m}, \quad  r_{S_{(m - 1)} + k_{1}} \in \{0\} \cup [\omega_d]\,\right.\\ \left. \mbox{and } r_{k_{2}} = 0 \mbox{ if }S_{(d - 1)} + 1 \leq k_{2} \leq S_{d}\right\},
\end{multline*}
$\mathcal{A}_{\mathbf{r}^{*}}=\left\{ \mathbf{r}^{*}\right\} $ and $\mathcal{A}_{>}=\mathbb{Z}_{+}^{S_{d}}\backslash\left\{ \mathcal{A}_{\mathbf{r}^{*}}\cup\mathcal{A}_{\le,\mathbf{0}}\right\} .$ With respect to $\mathcal{A}_{\leq,\mathbf{0}}$ note that, when $1 \leq k \leq S_{(m-1)}$ or $S_{m} + 1 \leq k \leq S_{(d-1)},$ there is no restriction on $r_{k}.$ This necessarily implies that for each $\mathbf{r} \in \mathcal{A}_{>},$ $\sum_{k}r_{k} > \omega_{d}\omega_{m}.$ Hence these tuples do not appear in the expansion of $g^{n}(\lambda_{d}),$ as given in \eqref{eq:nthDetgenMat}, whenever $n \leq \omega_{m}\omega_{d}.$ So we ignore $\mathcal{A}_{>}$ for the time being and determine the value of $\mu(\lambda_{d};\mathbf{r})$ for tuples lying in the other two sets using the definition of $B$ given in \eqref{eq:BStruct}.
\begin{enumerate}[1.]
\item $\mathbf{r} \in \mathcal{A}_{\leq,\mathbf{0}}:$ We consider the following subcases. 

\begin{enumerate}
\item \noindent $r_{S_{m}} < \omega_{d}:$ Here observe that when $\lambda_{d} = \lambda_{m},$
\[
\left.b_{S_{m}}^{r_{S_{m}}}(\lambda_{d})\right|_{\lambda_{d}=\lambda_{m}} = 
\begin{cases}
\left.b_{S_{d}}(\lambda_{d})\right|_{\lambda_{d}=\lambda_{m}}, & r_{S_{m}}=0 \vspace{0.15in}\\

\begin{array}{c}
\left(\prod\limits_{j=0}^{r_{(S_{m})} - 1}\left(\omega_{d} - j\right)\right) \times \\ 
\left.b_{S_{d} - r_{S_{m}}}(\lambda_{d}) \right|_{\lambda_{d} = \lambda_{m}}
\end{array}, 
& r_{S_{m}} \in [\omega_{d} - 1].
\end{cases}
\]

\item There exists $k \in  [\omega_{m} - 1]$ such that $r_{S_{m}} = r_{S_{m}-1} = \cdots = r_{S_{m}-k+1} = \omega_{d}$ and $r_{S_{m}-k} < \omega_{d}:$
For this subcase, observe that if $\omega_m \leq \omega_d,$ or $\omega_m > \omega_d$ and $1 \leq k < \omega_d,$ then 
\[
\left.b_{S_{m}-k}^{r_{(S_{m}-k)}}(\lambda_{d})\right|_{\lambda_{d}=\lambda_{m}} = 
\begin{cases}
\left.b_{S_{d}-k}(\lambda_{d})\right|_{\lambda_{d}=\lambda_{m}}, & r_{(S_{m} - k)} = 0 \vspace{0.15in}\\

\begin{array}{c}
\left(\prod\limits_{j=0}^{r_{(S_{m}-k)}-1}\left(\omega_{d}-j\right)\right) \times \\

\left.b_{S_{d}-k-r_{(S_{m}-k)}}(\lambda_{d})\right|_{\lambda_{d}=\lambda_{m}}
\end{array},
& 1 \leq r_{(S_{m} - k)} < \omega_{d} - k \vspace{0.15in}\\

\frac{\left.b_{S_{m} + \omega_d - k - r_{S_{m} - k}}^{\omega_d}(\lambda_{d})\right|_{\lambda_{d}=\lambda_{m}}}{\left(\omega_{d}-r_{S_{m}-k}\right)!}, & \omega_{d}-k \leq r_{S_{m} - k} < \omega_{d}
\end{cases}
\]

\noindent On the other hand, if $\omega_m > \omega_d$ and $\omega_d \leq k < \omega_m,$ then
\[
\left.b_{S_{m}-k}^{r_{(S_{m}-k)}}(\lambda_{d})\right|_{\lambda_{d}=\lambda_{m}} = \frac{\left.b_{S_{m} + \omega_d - k - r_{(S_{m} - k)}}^{\omega_d}(\lambda_{d})\right|_{\lambda_{d}=\lambda_{m}}}{\left(\omega_{d}-r_{S_{m}-k}\right)!}, \hspace{0.1in} 0    \leq r_{S_{m}-k} < \omega_{d}.
\]

\end{enumerate}

\noindent From this, it follows that for each $\mathbf{r} \in \mathcal{A}_{\leq,\mathbf{0}}$ there exists a pair of columns which are linearly dependent when $\lambda_{d} = \lambda_{m}$  and thus $\left.\mu(\lambda_{d};\mathbf{r})\right|_{\lambda_{d} = \lambda_{m}} = 0.$

\item $\mathbf{r}\in\mathcal{A}_{\mathbf{r}^{*}}:$ Let $V$ denote the matrix obtained after differentiating element-wise the individual columns of $B$ up to orders as indicated by $\mathbf{r}^{*}$ and substituting $\lambda_{d}=\lambda_{m}.$ It is easy to see that for $j, k  \in [S_d],$ 
\[
\left(V\right)_{jk} = 
\begin{cases}
\begin{array}{c}
\left(\prod\limits_{i \in [d]\backslash \{b,m\}} \left(\lambda_{i} + t_{j}\right)^{\omega_{i}}\right) \times \vspace{0.05in}\\
\left(\lambda_{b}+t_{j}\right)^{k-S_{(b-1)}-1}\left(\lambda_{m}+t_{j}\right)^{\omega_{m}+\omega_{d}}\end{array},
& \begin{array}{c}
	S_{(b-1)} < k \leq S_{b}\\
	\mbox{for some } b\in[d] \backslash \{m,d\}.

\end{array} \vspace{0.15in}\\

\begin{array}{c}
\omega_{d}!\left(\prod\limits_{i\in[d]\backslash\{m,d\}}\left(\lambda_{i}+t_{j}\right)^{\omega_{i}}\right) \times \vspace{0.05in}\\
\left(\lambda_{m}+t_{j}\right)^{k-S_{(m-1)}-1}
\end{array},
& S_{(m-1)} < k \leq S_{m} \vspace{0.15in}\\

\begin{array}{c}
\left(\prod\limits_{i \in [d]\backslash\{m,d\}}\left(\lambda_{i} + t_{j}\right)^{\omega_{i}}\right) \times \vspace{0.05in}\\

\left(\lambda_{m}+t_{j}\right)^{\omega_{m}+k-S_{(d-1)}-1}
\end{array},
& S_{(d-1)} < k \leq S_{d}.
\end{cases}
\]
Now observe that if we define the matrix $\tilde{V}$ from $V$ as 
\[
(\tilde{V})_{jk} = 
\begin{cases} 
\left(V\right)_{jk}, & 1 \leq k \leq S_{(m-1)}, \vspace{0.15in}\\

\frac{1}{\omega_{d}!}(V)_{jk}, & S_{(m-1)} + 1 \leq k \leq S_{m} \vspace{0.15in}\\

(V)_{j(k-S_{m}+S_{(d-1)})}, & S_{m} + 1 \leq k \leq S_{m} + \omega_{d} \vspace{0.15in}\\

(V)_{j(k-\omega_{d})}, & S_{m}+\omega_{d} + 1 \leq k \leq S_{d},
\end{cases}
\]
then $\tilde{V} \in \mathcal{C}_{d - 1}.$ From the induction hypothesis, it follows that
\begin{eqnarray*}
\det{\tilde{V}} & = & (-1)^{S_{(d - 1)} - S_{(m - 1)}} \left(\prod_{j = 1}^{d - 1} (\lambda_m - \lambda_j)^{\omega_j \omega_d} \right) \left( \prod_{(j_1, j_2) \in \mathcal{J}} (t_{j_2} - t_{j_1}) \right) \times \\
& & \left(\prod_{(i_1, i_2) \in \mathcal{I}_{(d - 1)}} (\lambda_{i_2} - \lambda_{i_1})^{\omega_{i_1}\omega_{i_2}}\right).
\end{eqnarray*}
Consequently, 
\begin{eqnarray*}
\mu(\lambda_{d};\mathbf{r}^{*})\bigr|_{\lambda_{d}=\lambda_{m}} & = & \det(V) \\
& = & \left(\prod_{j = 1}^{d - 1} (\lambda_m - \lambda_j)^{\omega_j \omega_d} \right) \left( \prod_{(j_1, j_2) \in \mathcal{J}} (t_{j_2} - t_{j_1}) \right) \times \nonumber\\ 
&   & \left(\prod_{(i_1, i_2) \in \mathcal{I}_{(d - 1)}} (\lambda_{i_2} - \lambda_{i_1})^{\omega_{i_1}\omega_{i_2}}\right). \nonumber
\end{eqnarray*}
\end{enumerate} \vspace{0.15in}

\noindent Now for each $n < \omega_{m}\omega_{d},$ note that $\Delta_{S_{d},n} \subset \mathcal{A}_{\le,0}$ while for $n = \omega_{m}\omega_{d},$ $\Delta_{S_{d},n} \subset \mathcal{A}_{\leq,0} \cup \mathcal{A}_{\mathbf{r}^{*}}.$ From the above observations and \eqref{eq:nthDetgenMat}, it then follows that, for each $0 \leq n < \omega_m \omega_d,$ $g^{n}(\lambda_{d}) \bigr|_{\lambda_{d} = \lambda_{m}} = 0$ while
\begin{eqnarray} \label{eq: gDerLmLd}
g^{\omega_{m}\omega_{d}}(\lambda_{d})\bigr|_{\lambda_{d} = \lambda_{m}} & = & (\omega_m \omega_d)! \left(\prod_{(i_1, i_2) \in \mathcal{I}_{(d - 1)}} (\lambda_{i_2} - \lambda_{i_1})^{\omega_{i_1}\omega_{i_2}}\right) \times \\ 
& &  \left( \prod_{(j_1, j_2) \in \mathcal{J}} (t_{j_2} - t_{j_1}) \right) \left(\prod_{j = 1}^{d - 1} (\lambda_m - \lambda_j)^{\omega_j \omega_d} \right). \nonumber
\end{eqnarray}
Clearly, $g^{\omega_{m}\omega_{d}}(\lambda_{d})\bigr|_{\lambda_{d} = \lambda_{m}} \neq 0.$ Thus, $\lambda_{m}$ is a root of $g(\lambda_{d})$ of multiplicity $\omega_{m}\omega_{d}.$ 

Since $m$ was arbitrary to start with, it follows that for every $i \in [d-1],$ $(\lambda_{d} -\lambda_{i})^{\omega_{i}\omega_{d}}$ is a factor of $g(\lambda_{d}).$ This suggests that original structure of $g(\lambda_{d})$ must have been of the form $g(\lambda_{d}) = h(\lambda_{d})\prod_{i\in[d-1]}\left(\lambda_{d}-\lambda_{i}\right)^{\omega_{i}\omega_{d}}$ for some univariate polynomial $h(\lambda_{d}).$ The fact that the determined multiplicities of the roots of $g(\lambda_{d})$
are exact ensures that $h(\lambda_{i}) \neq 0$ $\forall i \in [d - 1].$ Thus $h$ is not an identically zero polynomial. 

It remains to show that $h$ is a constant and in particular 
\begin{equation} \label{eq:hVal}
h = \left(\prod\limits_{(i_{1},i_{2}) \in \mathcal{I}_{(d - 1)}}(\lambda_{i_{2}}-\lambda_{i_{1}})^{\omega_{i_{1}}\omega_{i_{2}}}\right) \times \left(\prod\limits_{(j_1, j_2) \in \mathcal{J}}(t_{j_2} - t_{j_1})\right).
\end{equation}
Towards this observe that if $n \geq S_{(d-1)}\omega_{d} + 1,$ then for every tuple $\mathbf{r} \in \Delta_{S_{d},n}$ either one of $r_{S_{(d-1)}+1},$ $\ldots,$ $r_{S_{d}}$ is strictly greater than zero or one amongst $r_{1}, \ldots, r_{S_{(d-1)}}$ is strictly bigger than $\omega_{d}.$ In the former situation, arguing
as in observations (1) and (2) from Lemma (\ref{lem:matWNonSingular}), it is easy to see that $\mu(\lambda_{d};\mathbf{r})\equiv 0.$ For the latter situation, let us suppose that for some arbitrary $1 \leq k_{0} \leq S_{(d-1)},$ $r_{k_{0}} > \omega_{d}.$ The fact that every element $(B)_{jk_{0}}$
of column $k_{0}$ is a polynomial in $\lambda_{d}$ of degree $\omega_{d}$ immediately shows that $b_{k_{0}}^{r_{k_{0}}}(\lambda_{d}) \equiv 0$ and hence $\mu(\lambda_{d};\mathbf{r})\equiv0.$ Consequently, it follows that $g^n(\lambda_d) \equiv 0$ $\forall n > S_{(d-1)}\omega_d.$ That is, the degree of $g(\lambda_{d})$ is exactly $S_{(d-1)}\omega_{d}.$ In other
words, 
\begin{equation} \label{eq:eqFormg}
g(\lambda_{d})=h\times\prod_{i\in[d-1]}\left(\lambda_{d}-\lambda_{i}\right)^{\omega_{i}\omega_{d}}
\end{equation}
for some constant $h \neq 0.$ Differentiating \eqref{eq:eqFormg} up to order $\omega_m\omega_d$ for some arbitrary $m \in [d - 1]$ and comparing with \eqref{eq: gDerLmLd}, \eqref{eq:hVal} immediately follows as desired. 
\end{proof}

We now finally prove Lemma~\ref{lem:genTtauMatrix} by showing the following general result.

\begin{theorem} \label{thm:gentTtauMatrix}
For a fixed integer $d \geq 2,$ let $\omega_{1},$ $\ldots,$ $\omega_{d}$ be arbitrary natural numbers. Let $S_{0}=0$ and for $k \in [d],$ $S_{k}=S_{k - 1} + \omega_{k}.$ Further suppose that $\lambda_{1},$ $\ldots,$ $\lambda_{d}$ and $t_{1},$ $\ldots,$ $t_{S_{d}}$ are strictly positive real numbers satisfying $\lambda_{i_{1}}\neq\lambda_{i_{2}},$ if $i_{1}\neq i_{2}$ and $t_{j_{1}} \neq t_{j_{2}},$ if $j_{1} \neq j_{2}.$
Then the square matrix
\begin{equation} \label{eq:genTtauMat}
M :=  \left(\left.
\begin{array}{ccc}
\frac{t_{1}}{(\lambda_{1}+t_{1})} & \ldots & \frac{t_{1}^{\omega_{1}}}{(\lambda_{1}+t_{1})^{\omega_{1}}}\\
\vdots & \ddots & \vdots\\
\frac{t_{S_{d}}}{(\lambda_{1}+t_{S_{d}})} & \ldots & \frac{t_{S_{d}}^{\omega_{1}}}{(\lambda_{1}+t_{S_{d}})^{\omega_{1}}}
\end{array}
\right|
\cdots
\left|
\begin{array}{ccc}
\frac{t_{1}}{(\lambda_{d}+t_{1})} & \ldots & \frac{t_{1}^{\omega_{d}}}{(\lambda_{d}+t_{1})^{\omega_d}}\\
\vdots & \ddots & \vdots\\
\frac{t_{S_{d}}}{(\lambda_{d}+t_{S_{d}})} & \ldots & \frac{t_{S_{d}}^{\omega_{d}}}{(\lambda_{d}+t_{S_{d}})^{\omega_{d}}}
\end{array}\right.\right),
\end{equation}
is non-singular. 
\end{theorem}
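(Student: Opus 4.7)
The plan is to reduce the statement to Lemma~\ref{lem:genMatWNonSingular} by exhibiting $M$ as a rank-one perturbation of a non-singular matrix built from the matrix $W$ studied there. The key identity is the binomial expansion
\[
\frac{t_j^k}{(\lambda_i + t_j)^k} \;=\; \left(1-\frac{\lambda_i}{\lambda_i+t_j}\right)^k \;=\; 1 \;+\; \sum_{r=1}^{k}\binom{k}{r}(-\lambda_i)^r(\lambda_i+t_j)^{-r}.
\]
Let $\widetilde{W}$ denote the $S_d\times S_d$ matrix whose $(j,(i,r))$ entry is $(\lambda_i+t_j)^{-r}$ for $i\in[d]$ and $r\in[\omega_i]$; this is the matrix of Lemma~\ref{lem:genMatWNonSingular} up to reversing the column order within each block, so $\det \widetilde{W}\ne 0$. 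Let $B$ be the block-diagonal matrix whose $i$-th block is the $\omega_i\times\omega_i$ upper triangular matrix with entries $\binom{k}{r}(-\lambda_i)^r$ for $1\le r\le k\le\omega_i$. Collecting terms in the expansion above gives the factorization
\[
M \;=\; \mathbf{1}\mathbf{1}^{T} \;+\; \widetilde{W}B.
\]

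Since the $i$-th block of $B$ has diagonal entries $(-\lambda_i)^{k}$ for $k\in[\omega_i]$, one finds $\det(B)=\prod_{i}(-\lambda_i)^{\omega_i(\omega_i+1)/2}\ne 0$, hence $\widetilde{W}B$ is non-singular. Applying the matrix determinant lemma to the rank-one update gives
\[
\det(M) \;=\; \det(\widetilde{W}B)\,\Bigl[\,1 \;+\; \mathbf{1}^{T}(\widetilde{W}B)^{-1}\mathbf{1}\,\Bigr],
\]
so the proof reduces to showing that $\alpha:=1+\mathbf{1}^{T}(\widetilde{W}B)^{-1}\mathbf{1}$ is non-zero.

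To evaluate $\alpha$, set $u=\widetilde{W}^{-1}\mathbf{1}$; then $\widetilde{W}u=\mathbf{1}$ says that the rational function $\phi(t):=\sum_{i,r}u_{(i,r)}(\lambda_i+t)^{-r}$ satisfies $\phi(t_j)=1$ at the $S_d$ distinct points $t_1,\ldots,t_{S_d}$. Since $\phi(t)\prod_i(\lambda_i+t)^{\omega_i}$ is a polynomial of degree at most $S_d-1$ while $\phi(t)-1$ has degree exactly $S_d$ with the $S_d$ known zeros $\{t_j\}$, matching leading coefficients pins down
\[
\phi(t) \;=\; 1 \;-\; \frac{\prod_{j=1}^{S_d}(t-t_j)}{\prod_{i=1}^{d}(\lambda_i+t)^{\omega_i}},
\]
and each $u_{(i,r)}$ is the corresponding partial-fraction residue. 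The block-diagonal structure of $B$ reduces $\mathbf{1}^{T}B^{-1}u$ to a sum of contributions from each block, and a direct computation using the triangular form of the $B_i$ together with the explicit $u_{(i,\cdot)}$ yields the closed-form expression
\[
\alpha \;=\; (-1)^{S_d}\,\frac{\prod_{j=1}^{S_d} t_j}{\prod_{i=1}^{d}\lambda_i^{\omega_i}},
\]
which is manifestly non-zero since every $t_j$ and $\lambda_i$ is strictly positive.

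The main obstacle is the block-by-block evaluation in the last step: tracking binomial identities and signs to verify the closed form for $\alpha$ is where the bookkeeping is heaviest. A natural fallback is to avoid computing $\alpha$ altogether and instead mirror the inductive argument of Lemma~\ref{lem:genMatWNonSingular} applied directly to $M$: clear row denominators, regard $\det(M)$ as a polynomial in $\lambda_d$, and verify its roots at $\lambda_d=\lambda_m$ for $m<d$ (each of multiplicity $\omega_m\omega_d$) together with an extra root at $\lambda_d=0$ of multiplicity $\omega_d(\omega_d-1)/2$; this latter zero, arising from the additional $t_j^{k}$ factor in $M$ absent from $W$, is the only genuinely new feature of the induction.
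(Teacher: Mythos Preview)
Your rank-one decomposition $M=\mathbf{1}\mathbf{1}^{T}+\widetilde{W}B$ is correct and genuinely different from the paper's argument. The paper instead exploits the variant identity
\[
\frac{\lambda^{\,n-1}t}{(\lambda+t)^{n}}\;=\;\sum_{q=0}^{n-1}(-1)^{q}\binom{n-1}{q}\frac{t^{\,q+1}}{(\lambda+t)^{q+1}},
\]
which expresses the columns of a matrix $U$ with entries $\lambda_i^{k-1}t_j/(\lambda_i+t_j)^{k}$ as invertible linear combinations of the columns of $M$ \emph{within each block}. After factoring $t_j$ from each row and $\lambda_i^{k-1}$ from each column, $U$ is (a column reordering of) the matrix $W$ of Lemma~\ref{lem:genMatWNonSingular}, and non-singularity is immediate. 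The point is that this expansion produces no constant term, so the rank-one correction---and with it the quantity $\alpha$---never arises.

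As written, your argument has a gap: the closed form for $\alpha$ is asserted, not derived, and you flag the block-by-block bookkeeping as the main obstacle. In fact there is a one-line evaluation you are overlooking. With $v:=B^{-1}u$, set $\psi(t):=\sum_{i,k}v_{(i,k)}\bigl(t/(\lambda_i+t)\bigr)^{k}$; expanding each summand by your own binomial identity and using $Bv=u$ gives $\psi(t)=\mathbf{1}^{T}v+\phi(t)$ for \emph{every} $t$, not just the sample points. Evaluating at $t=0$, where $\psi(0)=0$ and $\phi(0)=1-(-1)^{S_d}\prod_j t_j\big/\prod_i\lambda_i^{\omega_i}$, yields
\[
\alpha\;=\;1+\mathbf{1}^{T}v\;=\;1-\phi(0)\;=\;(-1)^{S_d}\,\frac{\prod_{j}t_j}{\prod_i\lambda_i^{\omega_i}}\;\neq\;0,
\]
completing your proof. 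With this fix, your route is valid but still noticeably longer than the paper's column-operation reduction; the fallback induction you sketch is unnecessary and far heavier than either.
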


\begin{proof} 
For $k \in [S_d],$ let $b_k := \min\{i \in [d]: S_i \geq k\}.$ Now using the expansion $\lambda^{n-1}=\sum_{q=0}^{n-1}(-1)^{q}\tbinom{n-1}{q}t^{q}(\lambda+t)^{n-1-q},$ for $n \in \mathbb{Z}_{++},$ observe that 
\begin{equation} \label{eq:bExp}
\sum_{q=0}^{n-1}(-1)^{q}\tbinom{n-1}{q}\frac{t^{q+1}}{(\lambda+t)^{q+1}} =  \frac{\lambda^{n-1}t}{(\lambda+t)^{n}}.
\end{equation}
Using this note that, for any $j,k \in [S_d],$ 
\begin{equation} \label{eq:ColRel}
\sum_{q = 0}^{k - S_{(b_k - 1)} - 1} (-1)^{q}\tbinom{k - S_{(b_k - 1)} - 1}{q} \left(M\right)_{j(S_{(b_k - 1)} + 1 + q)} = \frac{\lambda_{b_k}^{k - S_{(b_k  - 1)} - 1}t_j}{(\lambda_{b_k} + t_j)^{k - S_{(b_k - 1)}}}.
\end{equation}
From this it follows that if, for each $i \in [d],$ we perform the column operations
\[
\mbox{col}_{k} \leftarrow \sum_{q=0}^{k-S_{(i-1)}-1}(-1)^{q}\tbinom{k-S_{(i-1)}-1}{q}\mbox{col}_{S_{(i-1)}+1+q}
\]
in the order $k = S_{i}, S_{i} - 1, \ldots, S_{(i - 1)} + 1,$ then we end up with matrix
\[
U = \left(\left.
\begin{array}{ccc}
\frac{t_{1}}{(\lambda_{1}+t_{1})} & \ldots & \frac{\lambda_{1}^{\omega_{1}-1}t_{1}}{(\lambda_{1}+t_{1})^{\omega_{1}}}\\
\vdots & \ddots & \vdots\\
\frac{t_{S_{d}}}{(\lambda_{1}+t_{S_{d}})} & \ldots & \frac{\lambda_{1}^{\omega_{1}-1}t_{S_{d}}}{(\lambda_{1}+t_{S_{d}})^{\omega_{1}}}
\end{array}\right|\cdots\left|\begin{array}{ccc}
\frac{t_{1}}{(\lambda_{d}+t_{1})} & \ldots & \frac{\lambda_{d}^{\omega_{d}-1}t_{1}}{(\lambda_{d}+t_{1})^{\omega_{d}}}\\
\vdots & \ddots & \vdots\\
\frac{t_{S_{d}}}{(\lambda_{d}+t_{S_{d}})} & \ldots & \frac{\lambda_{d}^{\omega_{d}-1}t_{S_{d}}}{(\lambda_{d}+t_{S_{d}})^{\omega_{d}}}
\end{array}\right.\right).
\]
Since only reversible column opertions are used to obtain $U$ from $M$, it suffices to show that $U$ is non-singular. But this is true from Lemma~\ref{lem:genMatWNonSingular}. The desired result thus follows.\qquad \end{proof}

\section{Coefficient expansion} \label{sec: Apdx CTexp}

\begin{lemma}
For any ${\boldsymbol \omega} \in \mathbb{Z}_{+}^{d},$ if $\mathcal{D}(\boldsymbol \omega) \neq \emptyset,$ then
\begin{equation} \label{eq:coefExpDup}
\Lambda^{\boldsymbol \omega} = \sum_{k \in \mathcal{D}(\boldsymbol \omega)}\sum_{q = 1}^{\omega_{k}} \gamma_{kq}(\boldsymbol \omega)\Lambda_{k}^{q}(t)
\end{equation}
for all $t.$ Further, this expansion is unique.
\end{lemma}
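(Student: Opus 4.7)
Plan: I would establish existence and uniqueness separately, since uniqueness is the easier half and reduces existence to verifying one specific formula.

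For uniqueness, I would show that the family of rational functions $\{\Lambda_k^q(t) : k \in \mathcal{D}(\boldsymbol \omega),\, 1 \leq q \leq \omega_k\}$ is linearly independent over $\mathbb{R}$. Since $\lambda_k > 0$ and $\lambda_k \neq \lambda_{d+1}$, each $\Lambda_k^q(t) = (\lambda_k - \lambda_{d+1})^q t^q/(\lambda_k + t)^q$ has a pole of order exactly $q$ at $t = -\lambda_k$ and is regular at every other $-\lambda_j$. Clearing denominators in any hypothetical relation $\sum c_{kq}\Lambda_k^q(t) \equiv 0$ by multiplying through by $\prod_{k \in \mathcal{D}(\boldsymbol \omega)}(\lambda_k + t)^{\omega_k}$ and then successively evaluating the resulting polynomial identity at $t = -\lambda_j$ along with its higher derivatives forces every $c_{kq}$ to vanish. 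Hence any decomposition of $\Lambda^{\boldsymbol \omega}$ in this family is unique, so it suffices to exhibit one.

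For existence, I would proceed via partial fractions. Observe that
\[
\Lambda^{\boldsymbol \omega}(t) = \Bigl(\prod_{r \in \mathcal{D}(\boldsymbol \omega)}(\lambda_r - \lambda_{d+1})^{\omega_r}\Bigr)\frac{t^{|\boldsymbol \omega|}}{\prod_{r \in \mathcal{D}(\boldsymbol \omega)}(\lambda_r + t)^{\omega_r}}
\]
is a rational function whose only finite poles occur at $t = -\lambda_k$ for $k \in \mathcal{D}(\boldsymbol \omega)$, of order $\omega_k$, and which is bounded at infinity. The standard partial fraction decomposition thus produces constants $C_0, A_{kq}$ with
\[
\Lambda^{\boldsymbol \omega}(t) = C_0 + \sum_{k \in \mathcal{D}(\boldsymbol \omega)}\sum_{q=1}^{\omega_k}\frac{A_{kq}}{(\lambda_k + t)^q}.
\]
Using the binomial identity $t^q = \bigl((\lambda_k + t) - \lambda_k\bigr)^q$, each $(\lambda_k + t)^{-q}$ can be written triangularly as a linear combination of $\Lambda_k^1(t), \ldots, \Lambda_k^q(t)$ and the constant $1$; substituting this back absorbs $C_0$ and produces a representation of the asserted form, $\Lambda^{\boldsymbol \omega}(t) = \sum_{k, q} \gamma_{kq}(\boldsymbol \omega) \Lambda_k^q(t)$.

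To identify $\gamma_{kq}(\boldsymbol \omega)$ with the closed form stated in \eqref{eq:GammakqDefn}, I would compute each $A_{kq}$ via the residue formula
\[
A_{kq} = \frac{1}{(\omega_k - q)!}\left.\frac{d^{\omega_k - q}}{dt^{\omega_k - q}}\!\Bigl[(\lambda_k + t)^{\omega_k}\Lambda^{\boldsymbol \omega}(t)\Bigr]\right|_{t = -\lambda_k}
\]
and apply Leibniz's rule to the remaining product $t^{|\boldsymbol \omega|}\prod_{r \in \mathcal{D}(\boldsymbol \omega)\setminus\{k\}}(\lambda_r + t)^{-\omega_r}$. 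Derivatives of $(\lambda_r + t)^{-\omega_r}$ at $t = -\lambda_k$ produce factors $\binom{\omega_r + s_r - 1}{\omega_r - 1}(\lambda_r - \lambda_k)^{-\omega_r - s_r}$, indexed by nonnegative integers $s_r$ with $\sum s_r = \omega_k - q$; the remaining derivatives fall on $t^{|\boldsymbol \omega|}$ and contribute powers of $\lambda_k$. Combined with the overall constant $\prod_r(\lambda_r - \lambda_{d+1})^{\omega_r}$ and the triangular conversion $(\lambda_k + t)^{-q} \mapsto \Lambda_k^p(t)$, the identities $\lambda_k(\lambda_r - \lambda_{d+1})/(\lambda_k - \lambda_r) = \beta_{kr}$ and $\lambda_r(\lambda_k - \lambda_{d+1})/(\lambda_r - \lambda_k) = \beta_{rk}$ regroup everything into the exact form of $\gamma_{kq}(\boldsymbol \omega)$ summed over $\mathbf{s} \in \bar{\Delta}_{kq}(\boldsymbol \omega)$. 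The main obstacle is precisely this last bookkeeping step: powers of $\lambda_k$, $\lambda_r$, $(\lambda_r - \lambda_k)$, and $(\lambda_r - \lambda_{d+1})$ arise from three separate sources (the leading constant, the derivatives of $t^{|\boldsymbol \omega|}$, and the change of basis), and must be repackaged carefully to reproduce the closed form of $\gamma_{kq}(\boldsymbol \omega)$.
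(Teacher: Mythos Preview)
Your approach is correct and genuinely different from the paper's. For uniqueness the paper simply invokes the nonsingularity of the coefficient matrix $T_\tau$ (Lemma~\ref{lem:genTtauMatrix}), whereas your pole-order argument is self-contained. For existence the paper does not use partial fractions at all: it runs a strong induction on $\sum_k\omega_k$, peeling off one factor $\Lambda_j(t)$ at a time and splitting into cases according to $|\mathcal D(\boldsymbol\omega)|$; the closed form of $\gamma_{kq}(\boldsymbol\omega)$ is verified within the induction via binomial identities such as $\sum_{r=j}^n\tbinom{r}{j}=\tbinom{n+1}{j+1}$. Your route is more conceptual---existence of \emph{some} expansion is immediate from the pole structure together with $\Lambda^{\boldsymbol\omega}(0)=0$ killing the constant---at the cost of deferring the identification of the coefficients to a residue computation. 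The paper's induction is longer but keeps the explicit formula in view throughout.

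One caution on your last step: applying Leibniz to $t^{|\boldsymbol\omega|}\prod_{r\neq k}(\lambda_r+t)^{-\omega_r}$ produces a sum in which some derivatives land on $t^{|\boldsymbol\omega|}$, so the resulting index set is strictly larger than $\bar\Delta_{kq}(\boldsymbol\omega)$; the match with \eqref{eq:GammakqDefn} is therefore not immediate and must survive the triangular conversion $(\lambda_k+t)^{-q}\mapsto\Lambda_k^p$. A cleaner execution of the same idea avoids this: the identity $\Lambda_k\Lambda_r=\beta_{kr}\Lambda_k+\beta_{rk}\Lambda_r$ rearranges to $\Lambda_r=\beta_{kr}\bigl(1-\beta_{rk}\Lambda_k^{-1}\bigr)^{-1}$, so near $t=-\lambda_k$ one has the Laurent expansion $\Lambda_r^{\omega_r}=\beta_{kr}^{\omega_r}\sum_{s_r\ge 0}\tbinom{\omega_r+s_r-1}{\omega_r-1}\beta_{rk}^{s_r}\Lambda_k^{-s_r}$. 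Multiplying over $r\neq k$ and by $\Lambda_k^{\omega_k}$, the coefficient of $\Lambda_k^q$ is read off directly as $\gamma_{kq}(\boldsymbol\omega)$, with the sum over $\bar\Delta_{kq}(\boldsymbol\omega)$ arising on the nose.
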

\begin{proof}
The uniqueness of the expansion is a simple consequence of Lemma~\ref{eq:genTtauMat}. We prove \eqref{eq:coefExpDup} using induction on $\deg(\boldsymbol \omega) = \sum_{k=1}^{d}\omega_{k}.$

For $\deg(\boldsymbol \omega) = 2,$ our basis step, we verify \eqref{eq:coefExpDup} by considering the following exhaustive cases.
\begin{enumerate}[1.]
\item there exists a unique $k \in [d]$ such that $\omega_{k} = 2$: Here, with $\mathcal{D}(\boldsymbol \omega) = \left\{ k\right\},$ observe that $\bar{\Delta}_{k1}(\boldsymbol \omega) = \emptyset$ and $\bar{\Delta}_{k2}(\boldsymbol \omega) = \left\{ \mathbf{0}\in\mathbb{Z}_{+}^{d}\right\}.$ Consequently, $\gamma_{11}(\boldsymbol \omega) = 0$ and $\gamma_{12}(\boldsymbol \omega) = 1.$ Equation~\eqref{eq:coefExpDup} thus trivially holds.

\item there exists unique $k_{1}, k_{2} \in [d]$ such that $\omega_{k_{1}} = \omega_{k_{2}} = 1$: Here observe that $\mathcal{D}(\boldsymbol \omega) = \left\{k_{1}, k_{2}\right\},$ $\bar{\Delta}_{k_{1}1}(\boldsymbol \omega) = \bar{\Delta}_{k_{2}1}(\boldsymbol \omega) = \left\{\mathbf{0} \in \mathbb{Z}_{+}^{d}\right\},$ $\gamma_{k_{1}1}(\boldsymbol \omega) = \beta_{k_{1}k_{2}}$ and $\gamma_{k_{2}1}(\boldsymbol \omega) = \beta_{k_{2}k_{1}}.$ Equation~\eqref{eq:coefExpDup} clearly holds true for this case since $\forall t$
\begin{eqnarray*}
\Lambda^{\boldsymbol \omega} & = & \Lambda_{k_{1}}(t)\Lambda_{k_{2}}(t)\\
& = & \beta_{k_{1}k_{2}}\Lambda_{k_{1}}(t) + \beta_{k_{2}k_{1}}\Lambda_{k_{2}}(t)\\
& = & \gamma_{k_{1}1}(\boldsymbol \omega)\Lambda_{k_{1}}(t) + \gamma_{k_{2}1}(\boldsymbol \omega)\Lambda_{k_{2}}(t).
\end{eqnarray*}
\end{enumerate}

\noindent Now for some fixed $n \geq 2,$ let the strong induction hypothesis be that \eqref{eq:coefExpDup} holds for all ${\boldsymbol \omega}$ such that $\deg(\boldsymbol \omega) \leq n.$ To verify \eqref{eq:coefExpDup} when $\deg(\boldsymbol \omega) = n + 1,$ we consider the following exhaustive cases.
\begin{enumerate}[1.]
\item there exists a unique $k \in [d]$ such that $\omega_{k} = n + 1:$ Here $\mathcal{D}(\boldsymbol \omega) = \{k\},$ 
\[
\bar{\Delta}_{kq}(\boldsymbol \omega) =
\begin{cases}
\emptyset, & q < n + 1\\
\left\{\mathbf{0} \in \mathbb{Z}_{+}^{d}\right\},  & q = n + 1
\end{cases}
\]
and consequently
\[
\gamma_{kq}(\boldsymbol \omega) = 
\begin{cases}
0, & q < n + 1\\
1, & q = n + 1.
\end{cases}
\]
Using these note that \eqref{eq:coefExpDup} again holds trivially. 

\item there exist unique $k_{1}, k_{2} \in [d]$ such that $\omega_{k_{1}}, \omega_{k_{2}} > 0$ and $\omega_{k_{1}} + \omega_{k_{2}} = n + 1$: The fact that $n \geq 2$ immediately ensures that at least one of $\omega_{k_{1}}, \omega_{k_{2}}$ is strictly bigger than unity. Without loss of generality, let us assume that $k_{1} = 1,$ $k_{2} = 2$ and $\omega_{1} > 1.$ Observe then that $\mathcal{D}(\boldsymbol \omega) = \{1, 2\}.$ Also, for each $q \in [\omega_1],$ we have $\bar{\Delta}_{1q}(\boldsymbol \omega) = \left\{ (0,\omega_{1}-q,0,\ldots,0)\in\mathbb{Z}_{+}^{d}\right\},$ a singleton set. Hence $\gamma_{1q}(\boldsymbol \omega) = \beta_{12}^{\omega_{2}}\tbinom{\omega_{2}+\omega_{1}-q-1}{\omega_{2}-1}\beta_{21}^{\omega_{1}-q}.$ Similarly, $\forall q\in[\omega_{2}],$ $\gamma_{2q}(\boldsymbol \omega)=\beta_{21}^{\omega_{1}}\tbinom{\omega_{1}+\omega_{2}-q-1}{\omega_{1}-1}\beta_{12}^{\omega_{2}-q}.$ Observe next that if $\omega_1^{\prime} := \omega_1 - 1,$ a positive number, then $\Lambda^{\boldsymbol \omega}$ can be written as $\Lambda_1(t)\left\{ \Lambda^{\boldsymbol \omega^{\prime}}\right\},$ where $\boldsymbol \omega^{\prime} = (\omega_1^\prime, \omega_2, 0, \ldots, 0).$ Clearly, $\deg(\boldsymbol \omega^{\prime}) = n,$ $\gamma_{1q}(\boldsymbol \omega^{\prime}) = \beta_{12}^{\omega_2} \tbinom{\omega_2 + \omega_1^\prime - q - 1}{\omega_2 - 1} \beta_{21}^{\omega_1^\prime - q},$ $\forall q \in [\omega_1^\prime],$ and $\gamma_{2q}(\boldsymbol \omega^{\prime}) = \beta_{21}^{\omega_1^\prime} \tbinom{\omega_1^\prime + \omega_2 - q - 1}{\omega_1^\prime - 1} \beta_{12}^{\omega_2 - q},$ $\forall q \in [\omega_2].$ By induction hypothesis, it then follows that 
\begin{eqnarray}
\Lambda^{\boldsymbol \omega} & = &\sum_{q=1}^{\omega_{1}^{'}}\gamma_{1q}(\boldsymbol \omega^{\prime})\Lambda_{1}^{q + 1}(t) + \sum_{q = 1}^{\omega_{2}}\gamma_{2q}(\boldsymbol \omega^{\prime})\Lambda_1(t)\Lambda_{2}^{q}(t) \nonumber \\
& := & \mbox{term}_{1}+\mbox{term}_{2}.\label{eq:TermSplit}
\end{eqnarray}
Since $\omega_1^\prime = \omega_1 - 1,$ it follows that 
\begin{eqnarray}
\mbox{term}_{1} & = & \sum_{q=1}^{\omega_{1}^{'}}\beta_{12}^{\omega_{2}}\tbinom{\omega_{2}+\omega_{1}^{'}-q-1}{\omega_{2}-1}\beta_{21}^{\omega_{1}^{'}-q}\Lambda_{1}^{q+1}(t) \nonumber \\ 
& = & \sum_{q=2}^{\omega_{1}}\beta_{12}^{\omega_{2}}\tbinom{\omega_{2}+\omega_{1}-q-1}{\omega_{2}-1}\beta_{21}^{\omega_{1}-q}\Lambda_{1}^{q}(t) \nonumber \\
& = & \sum_{q=2}^{\omega_{1}}\gamma_{1q}(\boldsymbol \omega)\Lambda_{1}^{q}(t).\label{eq:1stFinalExp}
\end{eqnarray}
For $\mbox{term}_2,$ we consider two subcases.
\begin{enumerate}[(a)]
\item $\omega_2 = 1$ or equivalently $\omega_1 = n$: Here, $\mbox{term}_2 = \beta_{21}^{n - 1} \Lambda_1(t) \Lambda_2(t).$ By additionally using the base case, it follows that 
\begin{eqnarray}
\mbox{term}_{2} & = & \beta_{21}^{n-1}\beta_{12}\Lambda_{1}(t)+\beta_{21}^{n}\Lambda_{2}(t) \nonumber \\
 & = & \gamma_{11}(\boldsymbol \omega)\Lambda_{1}(t) + \gamma_{21}(\boldsymbol \omega)\Lambda_{2}(t) \label{eq:2ndFinalExpSCa}
\end{eqnarray}
Substituting \eqref{eq:1stFinalExp} and \eqref{eq:2ndFinalExpSCa} in \eqref{eq:TermSplit}, it is clear that \eqref{eq:coefExpDup} holds.

\item $\omega_2 > 1:$ We again apply the induction hypothesis individually to the term $\Lambda_{1}(t)\Lambda_{2}^{q}(t)$ for each $1 \leq q \leq \omega_{2}.$ Note that none of these expansions yield scalar multiples of $\Lambda_{1}^{q}(t)$ for any $2 \leq q \leq \omega_{1}$ and, thus, their associated weights  obtained in \eqref{eq:1stFinalExp} remain unchanged in the overall decomposition of $\Lambda^{\boldsymbol \omega}.$ The term $\Lambda_{1}(t)$ is, however, definitely present in the final decomposition of $\mbox{term}_{2}.$ For each $q \in [\omega_2],$ observe that the constant associated with $\Lambda_1(t)$ in the expansion of $\Lambda_1(t)\Lambda_2^q(t)$ is $\gamma_{11}(1, q, 0, \ldots, 0) = \beta_{12}^{q}.$ From the definition of $\mbox{term}_2$, clearly, 
the constant associated to $\Lambda_1(t)$ in the final decomposition of $\Lambda^{\boldsymbol \omega}$ is
\begin{eqnarray*}
\sum_{q = 1}^{\omega_2} \gamma_{2q}(\boldsymbol \omega^\prime)\beta_{12}^q 
& = & \beta_{21}^{\omega_{1}^{'}}\beta_{12}^{\omega_{2}}\sum_{q=1}^{\omega_{2}}\tbinom{\omega_{1}^{'}+\omega_{2}-q-1}{\omega_{1}^{'}-1}\\
& = &\beta_{21}^{\omega_{1}^{'}}\beta_{12}^{\omega_{2}}\tbinom{\omega_{1}^{'}+\omega_{2}-1}{\omega_{1}^{'}},\\
& = &\beta_{21}^{\omega_{1}-1}\beta_{12}^{\omega_{2}}\tbinom{\omega_{2}+\omega_{1}-1-1}{\omega_{2}-1}\\
& = &\gamma_{11}(\boldsymbol \omega).
\end{eqnarray*}
Note that the second equality above  is a consequence of the well known binomial identity $\sum_{r=j}^{n}\tbinom{r}{j}=\tbinom{n+1}{j+1}.$ Next observe that the decomposition of $\Lambda_{1}(t)\Lambda_{2}^{q}(t)$ by the induction hypothesis also results in a linear combination of $\Lambda_{2}(t),$ $\Lambda_{2}^{2}(t),$ $\ldots,$ $\Lambda_{2}^{q}(t).$ This implies that 
\begin{equation} \label{eq:intterm2Exp}
\mbox{term}_{2} = \gamma_{11}(\boldsymbol \omega)\Lambda_{1}(t)+ \sum_{q=1}^{\omega_{2}}v_{q}\Lambda_{2}^{q}(t)
\end{equation}
for some $t-$independent constant $\{v_q\}.$ By substituing \eqref{eq:1stFinalExp} and \eqref{eq:intterm2Exp} in \eqref{eq:TermSplit}, it follows that
\begin{eqnarray}\label{eq:SplitLbda1}
\Lambda^{\boldsymbol \omega} & = & \sum_{q=1}^{\omega_{1}}\gamma_{1q}(\boldsymbol \omega)\Lambda_{1}^{q}(t)+\sum_{q=1}^{\omega_{2}}v_{q}\Lambda_{2}^{q}(t).
\end{eqnarray}

Now recall that $\omega_{2} > 1$ in the subcase under consideration. Expressing $\Lambda^{\boldsymbol \omega}$ as $\Lambda_{2}(t)\left\{\Lambda_{1}^{\omega_{1}}(t)\Lambda_{2}^{\omega_{2} - 1}(t)\right\}$ and interchanging the roles of $\Lambda_1(t)$ and $\Lambda_2(t)$ in the above argument, it then follows that 
\begin{eqnarray}\label{eq:SplitLbda2}
\Lambda^{\boldsymbol \omega} & = & \sum_{q=1}^{\omega_{1}}u_{q}\Lambda_{1}^{q}(t) + \sum_{q=1}^{\omega_{2}}\gamma_{2q}(\boldsymbol \omega)\Lambda_{2}^{q}(t)
\end{eqnarray}
for some $t-$independent constants $\{u_q\}.$ Comparing \eqref{eq:SplitLbda1} and \eqref{eq:SplitLbda2}, we get
\begin{equation} \label{eq:EqCoef}
\sum_{q=1}^{\omega_{1}}\left\{\gamma_{1q}(\boldsymbol \omega) - u_{q}\right\} \Lambda_{1}^{q}(t) + \sum_{q=1}^{\omega_{2}}\left\{v_{q}-\gamma_{2q}(\boldsymbol \omega)\right\} \Lambda_{2}^{q}(t)=0
\end{equation}
for all $t.$ A simple application of Lemma~\ref{eq:genTtauMat} then shows that $u_q = \gamma_{1q}(\boldsymbol \omega),$ $\forall q \in [\omega_1],$ and $v_q = \gamma_{2q}(\boldsymbol \omega),$ $\forall q \in [\omega_2].$ By substituting these values in either \eqref{eq:SplitLbda1} or \eqref{eq:SplitLbda2}, it follows that \eqref{eq:coefExpDup} holds for this case also. 
\end{enumerate}

\item Cardinality of the set $\mathcal{D}(\boldsymbol \omega) = \left\{ k\in[d]:\omega_{k}>0\right\}$ is atleast $3:$ Without loss of generality, let us suppose that $\omega_{1}, \omega_{2} > 0.$ Our approach here is to express $\Lambda^{\boldsymbol \omega}$ as $\Lambda_{1}^{\omega_{1}}(t)\left\{ \Lambda_{2}^{\omega_{2}}(t)\ldots\Lambda_{d}^{\omega_{d}}(t)\right\}$ and expand the term within the braces using the induction hypothesis. With $\boldsymbol \omega^{\prime\prime} = (0,\omega_{2},\ldots,\omega_{d}),$ it follows that 
\begin{eqnarray*}
\Lambda^{\boldsymbol \omega} & = & \Lambda_{1}^{\omega_{1}}(t)\left\{ \sum_{k \in \mathcal{D}(\boldsymbol \omega)\backslash \left\{1\right\}} \sum_{q = 1}^{\omega_{k}}\gamma_{kq}(\boldsymbol \omega^{\prime\prime})\Lambda_{k}^{q}(t)\right\} \\
 & = & \sum_{k\in\mathcal{D}(\boldsymbol \omega)\backslash\left\{ 1\right\} }\sum_{q=1}^{\omega_{k}}\gamma_{kq}(\boldsymbol \omega^{\prime\prime})\Lambda_{1}^{\omega_{1}}(t)\Lambda_{k}^{q}(t)\\
 & := & \sum_{k\in\mathcal{D}({\boldsymbol \omega})\backslash\left\{ 1\right\} }\mbox{term}_{k}.
\end{eqnarray*}
A neat observation, by virtue of our induction hypothesis, is that the eventual constant associated with $\Lambda_{k}^{q}(t)$, $k \in \mathcal{D} ({\boldsymbol \omega})\backslash\left\{ 1\right\} ,$ $q \in [\omega_{k}],$ depends solely on $\mbox{term}_{k}.$ Infact the only terms that contribute a scalar multiple of $\Lambda_{k}^{q}(t)$ are
$\Lambda_{1}^{\omega_{1}}(t)\Lambda_{k}^{q}(t),$ $\Lambda_{1}^{\omega_{1}}(t)\Lambda_{k}^{q+1}(t), \ldots, \Lambda_{1}^{\omega_{1}}(t)\Lambda_{k}^{\omega_{k}}(t).$ With this
in mind, we focus on $\mbox{term}_{2}$ and evaluate the eventual constant, say $\alpha,$ associated with $\Lambda_{2}^{q}(t)$ for some arbitrary $q \in [\omega_{2}].$ Firstly observe that the scalar multiple of $\Lambda_{2}^{q}(t)$ in the expansion of $\Lambda_{1}^{\omega_{1}}(t)\Lambda_{2}^{q+j}(t),$ when $0 \leq j \leq \omega_{2}-q,$ is $\gamma_{2q}(\omega_{1},q+j,0,\ldots,0).$ This implies that
\begin{eqnarray*}
\alpha & = & \sum_{j=0}^{\omega_{2}-q}\gamma_{2(q+j)}(\boldsymbol \omega^{\prime\prime})\gamma_{2q}(\omega_{1},q+j,0,\ldots,0)\\
 & := & \sum_{j=0}^{\omega_{2}-q}c_{j}.
\end{eqnarray*}
In relation to each $c_{j},$ observe that $\bar{\Delta}_{2q}(\omega_{1},q+j,0,\ldots,0)=\left\{ \left(j,0,\ldots,0\right)\right\},$ a singleton set, and for any $\mathbf{s}^{'} \equiv (s_{1}^{'},\ldots,s_{d}^{'}) \in \bar{\Delta}_{2(q+j)}(\boldsymbol \omega^{\prime\prime}),$ $s_{1}^{'} = 0$ and $\sum_{r=2}^{d}s_{r}^{'}=\omega_{2}-q-j.$ This implies that $\mathbf{s}^{'} \in \bar{\Delta}_{2(q+j)}(\boldsymbol \omega^{\prime \prime})$ if and only if the tuple $\left(j,s_{2}^{'},s_{3}^{'},\ldots,s_{d}^{'}\right) \in \bar{\Delta}_{2q}(\boldsymbol \omega)\bigcap\mathcal{B}_{j},$ where $\mathcal{B}_{j}:=\left\{ \mathbf{s}\in\mathbb{Z}_{+}^{d}:\, s_{1}=j\right\}.$ Thus, 
\[
c_{j} = \left(\prod_{r \in \mathcal{D}(\boldsymbol \omega)}\beta_{2r}^{\omega_{r}}\right)\sum_{\mathbf{s}\in\bar{\Delta}_{2q}(\boldsymbol \omega) \bigcap \mathcal{B}_{j}} \prod_{r\in \mathcal{D}(\boldsymbol \omega)}\tbinom{\omega_{r}+s_{r}-1}{\omega_{r}-1}\beta_{r2}^{s_{r}}.
\]

Consequently, it follows that $\alpha = \gamma_{2q}(\boldsymbol \omega).$ By replicating this argument for every $k\in\mathcal{D}\backslash\left\{ 1\right\} $
and every $d\in[\omega_{k}],$ observe that the weight associated with $\Lambda_{k}^{q}(t)$ in the final expansion is $\gamma_{kq}(\boldsymbol \omega).$ This, combined with the fact that the decomposition of $\Lambda_{1}^{\omega_{1}}(t)\Lambda_{k}^{q}(t)$ also results in a linear combination of $\Lambda_{1}(t),$ $\ldots,$ $\Lambda_{1}^{\omega_{1}}(t),$ shows that 
\[
\Lambda^{\boldsymbol \omega} = \sum_{q=1}^{\omega_{1}}u_{q}\Lambda_{1}^{q}(t)+\sum_{k\in\mathcal{D}(\boldsymbol \omega)\backslash\left\{ 1\right\} }\sum_{q=1}^{\omega_{k}}\gamma_{kq}(\boldsymbol \omega)\Lambda_{k}^{q}(t)
\]
for some $t-$independent constants $u_{1},$ $\ldots,$ $u_{\omega_{1}}.$ But observe that $\Lambda^{\boldsymbol \omega}$ can also be expressed as $\Lambda_{2}^{\omega_{2}}(t)\left\{ \Lambda_{1}^{\omega_{1}}(t)\Lambda_{3}^{\omega_{3}}(t)\ldots\Lambda_{d}^{\omega_{d}}(t)\right\}.$ By repeating the above arguments with an interchange of the roles of $\Lambda_{1}(t)$ and $\Lambda_{2}(t),$ we, thus, also get
\[
\Lambda^{\boldsymbol \omega}=\sum_{q=1}^{\omega_{2}}v_{q}\Lambda_{2}^{q}(t)+\sum_{k\in\mathcal{D}(\boldsymbol \omega)\backslash\left\{ 2\right\} }\sum_{q=1}^{\omega_{k}}\gamma_{kq}(\boldsymbol \omega)\Lambda_{k}^{q}(t)
\]
for some real constants $v_{1},$ $\ldots,$ $v_{\omega_{2}}.$ Arguing as in case $(2b)$ above, it is easy to see that $u_{q} = \gamma_{1q}(\boldsymbol \omega),$ $\forall q \in [\omega_1]$ and $v_{q} = \gamma_{2q}(\boldsymbol \omega),$ $\forall q \in [\omega_2].$
\end{enumerate}

\noindent This completes the induction argument and, hence, proves the desired result. \qquad
\end{proof}

\section{Regularity of EPS} \label{sec: Apdx RPEPS}
For discussions pertaining to this section, we suppose that the domain and the co-domain of the map $\mathcal{E},$ used to define the EPS of \eqref{eq:genElePolSys}, is $\mathbb{C}^{d \cdot N_i}.$ We now prove Lemma~\ref{lem:EPSWellBeh} through the following series of results.  

\begin{lemma} \label{lem:egRegPt}
Suppose $a_1, \ldots, a_{N_i}$ are $N_i$ distinct complex numbers. Let $\mathbf{a} \equiv (\mathbf{a}_1, \ldots, \mathbf{a}_{N_i}),$ where $\mathbf{a}_j \equiv (a_j, \ldots, a_j) \in \mathbb{C}^{d}.$ Then 
\begin{equation} \label{eq:detEPSNZ}
\det(\dot{\mathcal{E}}(\mathbf{x}))\bigr|_{\mathbf{x} = \mathbf{a}} = \prod_{1 \leq j_1 < j_2 \leq N_i} (a_{j_1} - a_{j_2})^d \neq 0.
\end{equation}
\end{lemma}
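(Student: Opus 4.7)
The plan is to treat $\det(\dot{\mathcal{E}}(\mathbf{a}))$ as a polynomial in $a_1,\ldots,a_{N_i}$ (with the $\lambda_k$'s as fixed parameters), show that $\prod_{j_1<j_2}(a_{j_1}-a_{j_2})^d$ divides it, and pin the quotient down to a nonzero scalar via a degree count. First I would reduce to $\dot\Phi(\mathbf{a})$ using the identity $T_\tau \mathcal{E}(\mathbf{x}) = \Phi(\mathbf{x}) - \lambda_{d+1}^{N_i}\mathbf{1}$ established in Step~3 of Section~\ref{subsec:ConstPolySys}, where $\Phi_j(\mathbf{x}) = \prod_{j'=1}^{N_i}\bigl[\sum_{k} x_{j'k}\Lambda_k(t_j)+\lambda_{d+1}\bigr]$. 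Since $\det T_\tau\neq 0$ by Lemma~\ref{lem:genTtauMatrix}, it suffices to study $\det\dot\Phi(\mathbf{a})$, whose $(j,(j'-1)d+l)$ entry is
\[
  \Lambda_l(t_j)\prod_{j''\neq j'}\bigl[a_{j''}\,\Sigma(t_j)+\lambda_{d+1}\bigr],\qquad \Sigma(t):=\sum_{k=1}^{d}\Lambda_k(t).
\]
The structural observation is that column $(j',l)$ depends on $(a_{j''})_{j''\neq j'}$ but not on $a_{j'}$, and is affine in each individual $a_{j_0}$ with $j_0\neq j'$. Since each $a_{j_0}$ appears in exactly $(N_i-1)d$ columns of $\dot\Phi(\mathbf{a})$ (those with $j'\neq j_0$), $\det\dot{\mathcal{E}}(\mathbf{a})$ has degree at most $(N_i-1)d$ in each individual $a_{j_0}$.

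Next I would extract divisibility by $(a_{j_1}-a_{j_2})^d$ for every pair $j_1<j_2$ from block-permutation symmetry. Fix such a pair and set $a_{j_1}=a_{j_2}$. By Lemma~\ref{lem:EPSSymm}, $\mathcal{E}$ is invariant under the transposition $\sigma$ of blocks $j_1,j_2$; differentiating $\mathcal{E}(\mathbf{x})=\mathcal{E}(\sigma\mathbf{x})$ at the $\sigma$-fixed point $\mathbf{x}=\mathbf{a}$ yields $\dot{\mathcal{E}}(\mathbf{a})(\mathbf{v}-\sigma\mathbf{v})=0$ for every $\mathbf{v}$. Hence the $d$-dimensional subspace
\[
  W := \{(\mathbf{0},\ldots,\mathbf{u},\ldots,-\mathbf{u},\ldots,\mathbf{0}):\mathbf{u}\in\mathbb{C}^d\}
\]
(with $\pm\mathbf{u}$ in blocks $j_1,j_2$) lies in $\ker\dot{\mathcal{E}}(\mathbf{a})$. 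Parametrising $s:=a_{j_2}-a_{j_1}$ with the other $a_j$'s fixed and changing to bases adapted to $W$ and to a complement of the range, one can write
\[
  \dot{\mathcal{E}}(\mathbf{a}) = \begin{pmatrix} A_0 + O(s) & O(s) \\ O(s) & O(s) \end{pmatrix},
\]
with $A_0$ a fixed invertible $(N_i-1)d\times(N_i-1)d$ block; the Schur-complement expansion then gives $\det\dot{\mathcal{E}}(\mathbf{a}) = \det(A_0+O(s))\cdot\det(O(s))$, where the second factor is the determinant of a $d\times d$ matrix all of whose entries are $O(s)$, hence $O(s^d)$. Translating this univariate vanishing back to the polynomial ring, $(a_{j_2}-a_{j_1})^d$ divides $\det\dot{\mathcal{E}}(\mathbf{a})$ in $\mathbb{C}[a_1,\ldots,a_{N_i}]$.

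Running this over all $\binom{N_i}{2}$ pairs and using pairwise coprimality, $\prod_{j_1<j_2}(a_{j_1}-a_{j_2})^d$ divides $\det\dot{\mathcal{E}}(\mathbf{a})$. This divisor already has degree $(N_i-1)d$ in each $a_{j_0}$, matching the upper bound from the first paragraph, so the quotient is a constant $C$ in $(a_1,\ldots,a_{N_i})$. Consequently
\[
  \det\dot{\mathcal{E}}(\mathbf{a}) = C\cdot\prod_{1\le j_1<j_2\le N_i}(a_{j_1}-a_{j_2})^d.
\]
Nonvanishing of $C$ (hence of the determinant) follows by induction on $N_i$ using the trivial base case $N_i=1$, where $h_{k1}(\mathbf{x})=x_{1k}$ forces $\dot{\mathcal{E}}=I_d$ and $C=1$; the inductive step can be carried out by specialising $\mathbf{a}$ so that one $a_{j_0}$ dominates and cofactor-expanding, or equivalently by tracking the coefficient of $\prod_j a_j^{(j-1)d}$ through the expression for $\dot\Phi(\mathbf{a})$.

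The principal obstacle is the Schur-complement estimate: one must argue that the $d$-dimensional symmetry-induced null space really translates into vanishing order at least $d$ of the determinant (and not merely order~$1$). Once that block estimate is in place, the divisibility, the degree comparison, and the nonvanishing of the overall scalar are all routine. Pinning $C$ down to its exact stated value amounts to one leading-coefficient bookkeeping computation, which is not needed for the nonsingularity conclusion itself.
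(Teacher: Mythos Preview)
Your strategy is genuinely different from the paper's. The paper computes $\partial h_{k_1q_1}/\partial x_{j_2k_2}\big|_{\mathbf{x}=\mathbf{a}}$ explicitly as a linear combination $\sum_{l\ge q_1-1} c_l(k_1,q_1,k_2)\,e_l(a_1,\ldots,\widehat{a_{j_2}},\ldots,a_{N_i})$ of elementary symmetric polynomials, observes that the leading coefficient satisfies $c_{q_1-1}(k_1,q_1,k_2)=\delta_{k_1k_2}$, and then performs a triangular sequence of row operations that reduce $\dot{\mathcal{E}}(\mathbf{a})$ to a block-diagonal matrix $\operatorname{diag}(\Gamma,\ldots,\Gamma)$ with $\Gamma_{q,j}=e_{q-1}(a_1,\ldots,\widehat{a_j},\ldots,a_{N_i})$. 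This yields $\det\dot{\mathcal{E}}(\mathbf{a})=(\det\Gamma)^d=\prod_{j_1<j_2}(a_{j_1}-a_{j_2})^d$ with the constant $1$ coming out automatically. Your symmetry-plus-degree argument is more conceptual and would transfer to related settings, whereas the paper's calculation pins the constant down exactly without a separate step.

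Two remarks on the execution. First, the Schur-complement packaging is unnecessary and rests on an unproven claim (invertibility of $A_0$, i.e.\ that the rank at $s=0$ is exactly $(N_i-1)d$). Once you change the domain basis so that the last $d$ columns correspond to $W$, those columns are entrywise $O(s)$; the Leibniz expansion then gives $\det=O(s^d)$ directly, with no row-basis change and no invertibility hypothesis needed. So the divisibility step is fine.

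The real gap is the nonvanishing of $C$. Your induction sketch (``let one $a_{j_0}$ dominate and cofactor-expand'' or ``track the coefficient of $\prod_j a_j^{(j-1)d}$'') is not a proof: relating the top-degree part of $\det\dot{\mathcal{E}}(\mathbf{a})$ in $a_{N_i}$ to the $(N_i-1)$-variable Jacobian requires identifying how the polynomials $h_{kq}$ for $N_i$ specialize, which is nontrivial because the $h_{kq}$ mix all $d$ coordinates through the $\gamma_{kq}(\boldsymbol\omega)$; and the coefficient of $\prod_j a_j^{(j-1)d}$ in $\bigl(\prod_{j_1<j_2}(a_{j_1}-a_{j_2})\bigr)^d$ is itself a sum over many terms, not a single monomial, so matching it on the other side is real work. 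Since $C$ may a priori depend on $\lambda_1,\ldots,\lambda_{d+1}$, you need either one explicit evaluation showing $\det\dot{\mathcal{E}}(\mathbf{a})\neq 0$, or an honest leading-term computation. Without this, the lemma's essential content (nonsingularity) is not established by your argument.
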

\begin{proof}
Pick an arbitrary $\boldsymbol \Omega \in \Delta_{d + 1, N_i},$ $j_2 \in [N_i]$ and $k_2 \in [d].$ We first show that
\begin{equation} \label{eq:DerivativePolg}
\left.\frac{\partial g(\mathbf{x};\boldsymbol \Omega)}{\partial x_{j_{2}k_{2}}}\right|_{\mathbf{x} = \mathbf{a}} =
\begin{cases}
0, & \omega_{k_{2}} = 0 \\
\nu(\boldsymbol \Omega,k_{2}) e_{\mathrm{\eta}(\boldsymbol \omega)}(a_{1},\ldots,a_{j_{2}-1},a_{j_{2}+1},\ldots,a_{N_{i}}), & \mbox{otherwise},
\end{cases}
\end{equation}
where $\nu(\boldsymbol \Omega;k_{2})$ is a number, independent of choice of the vector $\mathbf{a}$ and $j_{2}\in[N_{i}],$ $\eta(\boldsymbol \omega)=\left(\sum_{k=1}^{d}\omega_{k}\right) - 1 \in \mathbb{Z}_{+}$ and $e_{l}(\cdot),$ for $0 \leq l \leq N_i - 1,$ is the $l^{th}$ elementary symmetric polynomial in $N_{i} - 1$ variables.

Suppose that $\omega_{k_2} = 0.$ Then clearly, $\forall \mathbf{b} \in \mathcal{B}_{\boldsymbol \Omega},$ $\theta_{k_2}(\mathbf{b}) = 0.$ This implies that $g(\mathbf{x}; \boldsymbol \Omega)$ is a constant with respect to $x_{j_2k_2}.$ Thus, $\frac{\partial g(\mathbf{x}; \boldsymbol \Omega)}{\partial x_{j_2k_2}}\bigr|_{\mathbf{x} = \mathbf{a}} = 0$ as desired.

Now suppose that $\omega_{k_2} \in [N_i].$ Clearly, 
\[
g(\mathbf{x};\boldsymbol \Omega) =  x_{j_{2} k_{2}} \sum_{\mathbf{b} \in \mathcal{B}_{\boldsymbol \Omega}, \, b_{j_{2}} = k_{2}}\left(\prod_{j=1, \, j \neq j_{2}, \, b_{j} \neq d+1 }^{N_{i}} x_{jb_j} \right) + 
\sum_{\mathbf{b} \in \mathcal{B}_{\boldsymbol \Omega}, \, b_{j_{2}} \neq k_{2}} \left(\prod_{j=1, \, b_{j} \neq d+1}^{N_{i}}x_{jb_{j}}\right).
\]
Hence,
\begin{equation} \label{eq:diffg1}
\left.\frac{\partial g(\mathbf{x};\boldsymbol \Omega)}{\partial x_{j_{2}k_{2}}} \right|_{\mathbf{x} = \mathbf{a}}  = 
\sum_{\mathbf{b}\in\mathcal{B}_{\boldsymbol \Omega}, \, b_{j_2} = k_2}\left(\prod_{j = 1, \, j \neq j_{2}, \, b_{j} \neq d+1}^{N_{i}}a_{j}\right).
\end{equation}
Now for any $\mathcal{S} \subseteq [N_{i}],$ such that $|\mathcal{S}| = \omega_{d+1},$ define the set $\mathcal{B}_{\boldsymbol \Omega}(j_{2},k_{2}, \mathcal{S}) \subseteq \mathcal{B}_{\boldsymbol \Omega}$ by 
\[
\mathcal{B}_{\boldsymbol \Omega}(j_{2},k_{2}, \mathcal{S}) = \left\{ \mathbf{b} \in [d+1]^{N_{i}} : b_{j_2} = k_{2}; \, \forall j \in \mathcal{S}, \, b_{j} = d + 1; \, \Theta(\mathbf{b}) = \boldsymbol \Omega\right\}.
\]
Its cardinality, given by 
\begin{equation} \label{eq:setCard}
|\mathcal{B}_{\boldsymbol \Omega}(j_{2}, k_{2},\mathcal{S})| = 
\tbinom{N_{i} - 1 - \omega_{d+1}}{\omega_{1}, \; \cdots, \; \omega_{(k_{2} - 1)}, \; \omega_{k_2} - 1, \; \omega_{(k_2 + 1)}, \; \cdots, \; \omega_{d}, \; 0},
\end{equation}
clearly depends on $|\mathcal{S}|,$ but not on $\mathcal{S}$ itself. Also, it is independent of choice of the vector $\mathbf{a}$ and $j_2 \in [N_i].$ By defining 
\begin{equation} \label{eq:setCardIndofJ_2}
\nu(\boldsymbol \Omega; k_{2}) = |\mathcal{B}_{\boldsymbol \Omega}(j_{2},k_{2},\mathcal{S})|
\end{equation}
and using the fact that $\left\{\mathbf{b} \in \mathcal{B}_{\boldsymbol \Omega} : b_{j_{2}} = k_{2}\right\} = \bigcup_{\mathcal{S} \subseteq [N_{i}], \, |\mathcal{S}| = \omega_{d+1}} \mathcal{B}_{\boldsymbol \Omega}(j_{2}, k_{2}, \mathcal{S}),$ a disjoint union, observe from \eqref{eq:diffg1} that 
\begin{eqnarray*}
\frac{\partial g(\mathbf{x}; \boldsymbol \Omega)}{\partial x_{j_{2}k_{2}}} & = & 
\sum_{\mathcal{S} \subseteq [N_{i}], \, |\mathcal{S}| = \omega_{d+1}} \nu(\boldsymbol \Omega; k_{2})\left(\prod_{j=1, \, j \notin \mathcal{S} \cup\{j_{2}\}}^{N_{i}} a_{j}\right) \\\vspace{0.2in}
& = & \nu(\boldsymbol \Omega;k_{2})e_{\eta(\boldsymbol \omega)}(a_{1}, \ldots, a_{(j_2 - 1)}, a_{(j_2 + 1)}, \ldots,a_{N_{i}}),
\end{eqnarray*}
as desired. This now completes the verification of \eqref{eq:DerivativePolg}. 

Now fix an arbitrary $k_{1},k_{2} \in [d]$ and $q_{1}, j_{2} \in [N_{i}].$ As of consequence of \eqref{eq:DerivativePolg}, observe that
\begin{equation} \label{eq:hkqGenParDer}
\left.\frac{\partial h_{k_{1}q_{1}}(\mathbf{x})}{\partial x_{j_{2}k_{2}}} \right|_{\mathbf{x} = \mathbf{a}} =
\sum_{
\begin{array}{c}
\boldsymbol \Omega \in \Delta_{d+1,N_{i}},\\
\omega_{k_{1}} \geq q_{1},\\
\omega_{k_{2}} > 0
\end{array}
}
\left[
\begin{array}{c}
\frac{\gamma_{k_1q_1}(\boldsymbol \omega) \nu(\boldsymbol \Omega;k_{2})}{\lambda_{d+1}^{N_{i} - q_{1} - \omega_{d+1}}} \times \\
e_{\eta(\boldsymbol \omega)}(a_{1},\ldots,a_{j_{2}-1},a_{j_{2}+1},\ldots,a_{N_{i}})
\end{array}
\right].
\end{equation}
By grouping the symmetric polynomials in the above equation that have the same degree, i.e., identical values of $\eta(\boldsymbol \omega),$ we get
\begin{equation} \label{eq:diffhjqDefn}
\left.\frac{\partial h_{k_{1}q_{1}}(\mathbf{x})}{\partial x_{j_{2}k_{2}}} \right|_{\mathbf{x} = \mathbf{a}} =
\sum_{l = q_1 - 1}^{N_i - 1} c_l(k_1, q_1, k_2) e_l(a_{1},\ldots,a_{j_{2}-1},a_{j_{2}+1},\ldots,a_{N_{i}})),
\end{equation}
where
\[
c_{l}(k_{1},q_{1},k_{2}) := \sum_{\boldsymbol \Omega \in\Delta_{d + 1,N_{i}}, \, \omega_{k_{1}} \geq q_{1},\, \omega_{k_2} > 0, \,\eta(\boldsymbol \omega) = l}\left(\frac{\gamma_{k_{1}q_{1}}(\boldsymbol \Omega)}{\lambda_{d+1}^{N_{i} - q_1 - \omega_{d + 1}}}\right)\nu(\boldsymbol \Omega;k_{2}).
\]
Using \eqref{eq:GammakqDefn}, \eqref{eq:setCard} and \eqref{eq:setCardIndofJ_2},  note in particular that 
\begin{equation} \label{eq:splCVal}
c_{q_1 - 1}(k_1, q_1, k_2) = 
\begin{cases}
1, & k_1 = k_2\\
0, & \mbox{otherwise}.
\end{cases}
\end{equation}

For every $k_{1},k_{2}\in[N_{i}],$ with
\begin{eqnarray} \label{eq:Xik_1k_2}
\Xi_{k_{1},k_{2}} & := & 
\left(
\begin{array}{ccc}
\frac{\partial h_{k_{1}1}(\mathbf{x})}{\partial x_{1k_{2}}} & \cdots & \frac{\partial h_{k_{1}1}(\mathbf{x})}{\partial x_{N_{i}k_{2}}}\\
\vdots & \ddots & \vdots\\
\frac{\partial h_{k_{1}N_{i}}(\mathbf{x})}{\partial x_{1k_{2}}} & \cdots & \frac{\partial h_{k_{1}N_{i}}(\mathbf{x})}{\partial x_{N_{i}k_{2}}}
\end{array}
\right),
\end{eqnarray}
consider the matrix 
\begin{equation} \label{eq:XiMat}
\Xi:= \dot{\mathcal{E}}(\mathbf{x}) =
\left(
\begin{array}{ccc}
\Xi_{1,1} & \cdots & \Xi_{1,d}\\
\vdots & \ddots & \vdots\\
\Xi_{d,1} & \cdots & \Xi_{d,d}
\end{array}\right).
\end{equation}
Using elementary row operations, our strategy is show that 
\begin{equation} \label{eq:diagFormDetEMat}
\det(\dot{\mathcal{E}}(\mathbf{x})) = 
\det\left(
\begin{array}{cccc}
\Gamma & \mathbf{0} & \cdots & \mathbf{0}\\
\mathbf{0} & \Gamma & \cdots & \mathbf{0}\\
\vdots & \vdots & \ddots & \vdots\\
\mathbf{0} & \mathbf{0} & \cdots & \Gamma
\end{array}
\right) = \left(\det(\Gamma)\right)^{d},
\end{equation}
where 
\begin{equation} \label{eq:GammaMat}
\Gamma=\left(\begin{array}{ccc}
e_{0}(a_{2},\ldots,a_{N_{i}}) & \cdots & e_{0}(a_{1},\ldots,a_{N_{i}-1})\\
\vdots & \ddots & \vdots\\
e_{N_{i}-1}(a_{2},\ldots,a_{N_{i})} & \cdots & e_{N_{i}-1}(a_{1},\ldots,a_{N_{i}-1})
\end{array}\right).
\end{equation}
This is clearly sufficient to verify \eqref{eq:detEPSNZ} since $\det(\Gamma) = \prod_{1 \leq j_{1} < j_{2} \leq N_{i}}(a_{j_{1}}-a_{j_{2}})$ is a well known result. 

For notational convenience, the matrix obtained after applying elementary operations on $\Xi$ will again be referred to by $\Xi.$ The notation $\mbox{row}(r, k_1)$ will mean the $r^{th}$ row of the $k_{1}^{th}$ block matrix row of $\Xi,$ i.e. the $r^{th}$ row of 
\[
[
\begin{array}{ccc}
\Xi_{k_{1},1} & \cdots & \Xi_{k_{1},d}
\end{array}].
\] 
Similarly $\mbox{col}(c, k_2)$ will refer to the $c^{th}$ column of the $k_2^{th}$ block matrix column of $\Xi.$ We will say that the matrix $\Xi$ is in state $n,$ $n\in[N_{i}],$ if:
\begin{enumerate}[1.]
\item $\det(\Xi)=\det(\dot{\mathcal{E}}(\mathbf{x})).$

\item for each $k_1 \in [d]$ and $q \in [N_{i}],$ the $q^{th}$ row of $(\Xi_{k_1, k_2} - \Gamma),$ where $\Gamma$ is as defined in \eqref{eq:GammaMat}, is 
\[
[
\begin{array}{ccc}
\sum\limits_{l=q}^{N_{i}-n}c_{l}(k_{1},q,k_{1})e_{l}(a_{2},\ldots,a_{N_{i}}) & \cdots & \sum\limits_{l=q}^{N_{i}-n}c_{l}(k_{1},q,k_{1})e_{l}(a_{1},\ldots,a_{N_{i}-1})
\end{array}
].
\]

\item for each $k_1, k_2 \in [d],$ such that $k_{1} \neq k_{2},$ and each $q \in [N_{i}],$ the $q^{th}$ row of $\Xi_{k_{1},k_{2}}$ is
\[
[
\begin{array}{ccc}
\sum\limits_{l=q}^{N_{i}-n}c_{l}(k_{1},q,k_{2})e_{l}(a_{2},\ldots,a_{N_{i}}) & \cdots & \sum\limits_{l=q}^{N_{i}-n}c_{l}(k_{1},q,k_{2})e_{l}(a_{1},\ldots,a_{N_{i}-1})
\end{array}
].
\]
\end{enumerate}

From \eqref{eq:diffhjqDefn} and \eqref{eq:splCVal}, it is clear that $\Xi,$ as given in \eqref{eq:XiMat}, is in state $1.$ To verify \eqref{eq:diagFormDetEMat}, it suffices to show that the state of $\Xi$ can be changed to $N_i$ using reversible elementary row operations. We do so by describing the reversible row operations needed to put $\Xi$ in state $n + 1$ starting from a state $n,$ where $1 \leq n < N_i.$

Suppose that $k_1 = k_2.$ Then from the definition of $\Xi$ being in state $n,$ it follows that when $N_i - n < q \leq N_i,$ the $q^{th}$ row of $\Xi_{k_1k_2}$ is given by
\[
[
\begin{array}{ccc}
e_{q-1}(a_{2},\ldots,a_{N_{i}}) & \cdots & e_{q-1}(a_{1},\ldots,a_{N_{i}-1})
\end{array}
],
\]
while, for $1 \leq q \leq N_{i} - n,$ it is given by
\[
[
\begin{array}{ccc}
\sum\limits_{l=q-1}^{N_{i}-n}c_{l}(k_{1},q,k_{1})e_{l}(a_{2},\ldots,a_{N_{i}}) & \cdots & \sum\limits_{l=q-1}^{N_{i}-n}c_{l}(k_{1},q,k_{1})e_{l}(a_{1},\ldots,a_{N_{i}-1})
\end{array}
].
\]
On the other hand, when $k_1 \neq k_2,$ the $q^{th}$ row, $N_i - n < q \leq N_i,$ of $\Xi_{k_1k_2}$ is $\mathbf{0} \in \mathbb{C}^{N_i},$ while for $1 \leq q \leq N_i - n,$ it is given by
\[
[
\begin{array}{ccc}
\sum\limits_{l=q}^{N_{i}-n}c_{l}(k_{1},q,k_{1})e_{l}(a_{2},\ldots,a_{N_{i}}) & \cdots & \sum\limits_{l=q}^{N_{i}-n}c_{l}(k_{1},q,k_{1})e_{l}(a_{1},\ldots,a_{N_{i}-1})
\end{array}
].
\]
From these observations, it is clear that the effect of subtracting a scalar multiple of $\mbox{row}(N_i - n + 1, k),$ for some $k \in [d],$ from any other arbitrary row of $\Xi,$ say $\mbox{row}(r, k_1),$ where $k_1 \in [d]$ and $r \in [N_i],$ is only on the elements whose column index is one of $\mbox{col}(1,k), \ldots, \mbox{col}(N_i, k).$

Now fix an arbitrary $k \in [d]$ and consider the following row operations: 
\begin{enumerate}[1.]
\item $\mbox{row}(N_{i} - n, k) \leftarrow \mbox{row}(N_{i} - n, k) - c_{N_{i} - n}(k, N_{i} - n, k) \times \mbox{row}(N_i - n + 1, k).$ Leaving all other elements unchanged, this operation changes the $(N_i - n)^{th}$ row of the matrix $\Xi_{k, k}$ to
\[
[
\begin{array}{ccc}
e_{N_i - n - 1}(a_{2}, \ldots, a_{N_i}) & \cdots & e_{N_i - n - 1}(a_{1}, \ldots, a_{N_i - 1})
\end{array}
].
\]

\item \noindent $\mbox{row}(q, k) \leftarrow \mbox{row}(q, k) - c_{N_i - n}(k, q, k) \times \mbox{row}(N_i - n + 1, k)$ for each $1 \leq q < N_i - n.$ For every $q,$ leaving other elements unchanged, this operation changes the $q^{th}$ row of $\Xi_{k, k}$ to 
\[
[
\begin{array}{ccc}
\sum\limits_{l = q - 1}^{N_i - (n + 1)}c_{l}(k, q, k)e_{l}(a_{2},\ldots,a_{N_i}) & \cdots & \sum\limits_{l = q - 1}^{N_i - (n + 1)}c_{l}(k, q, k)e_{l}(a_{1},\ldots,a_{N_{i}-1})
\end{array}
].
\]

\item $\mbox{row}(N_i - n, k_1) \leftarrow \mbox{row}(N_{i} - n, k_1) - c_{N_i - n}(k_1, N_i - n, k) \times \mbox{row}(N_i - n + 1, k)$ for each $k_1 \in [d]\backslash k.$ For every $k_1,$ leaving other elements unchanged, this operation changes the $(N_i - n)^{th}$ row of the matrix $\Xi_{k_1, k}$ to $\mathbf{0} \in \mathbb{C}^{N_i}.$

\item $\mbox{row}(q, k_1) \leftarrow \mbox{row}(q, k_1) - c_{N_i - 1}(k_1, q, k) \times \mbox{row}(N_i - n + 1, k)$ for each $k_1\in[d] \backslash k$ and each $1 \leq q <N_i - n.$ For every $k_1, q,$ leaving other elements unchanged, this operation changes the $q^{th}$ row of $\Xi_{k_1, k}$ to 
\[
[
\begin{array}{ccc}
\sum\limits_{l = q}^{N_i - (n + 1)}c_{l}(k_1, q, k)e_{l}(a_{2},\ldots,a_{N_i}) & \cdots & \sum\limits_{l = q}^{N_i - (n + 1)}c_{l}(k_1, q, k)e_{l}(a_{1},\ldots,a_{N_i - 1})\end{array}
].
\]
\end{enumerate}
At present, the matrices $\Xi_{1, k},\ldots,\Xi_{d, k}$ are in the form as required when the $\Xi$ is in state $n + 1.$ By repeating the above operations for each $k \in [d],$ the entire $\Xi$ matrix can thus be put in state $n + 1$ as desired. \qquad \end{proof}

We now recall some standard results from numerical algebraic geometry. Lemmas~\ref{lem:NonEmpDense} and \ref{lem:NonEmZarOpenCandR} may be found in \cite{Mumford99}, Theorems~\ref{thm:Bezout's-Theorem} and \ref{thm:ConstFinSolutions} may be found in \cite{Sommese05} while Theorem~\ref{thm:RegValZarOpen} may be found in \cite{Hartshorne77} or \cite{Li96}. 

\begin{definition}
A set $V \subseteq \mathbb{C}^n$ is said to be Zariski closed if there exist $f_{1}, \ldots, f_{k} \in \mathbb{C}[x_{1},\ldots,x_{n}],$ such that
\[
V = \{\mathbf{x} \equiv(x_{1},\ldots,x_{n}) \in \mathbb{C}^{n}:f_{1}(\mathbf{x})=\cdots=f_{k}(\mathbf{x})=0\}.
\]
\end{definition}
\begin{definition}
A set $V \subseteq \mathbb{C}^n$ is said to be Zariski open if its compliment is Zariski closed.
\end{definition}

By replacing $\mathbb{C}$ with $\mathbb{R}$ everywhere in the above definitions, Zariski closed and Zariski open sets of $\mathbb{R}^n$ can be equivalently defined.

To distinguish from the open and closed sets of the usual Euclidean topology, the open and closed sets of the Zariski topology with always have the word \emph{Zariski} prefixed to them.

\begin{lemma} \label{lem:NonEmpDense} 
A Zariski open subset $\mathcal{O}$ of $\mathbb{R}^{n}$ $(\mbox{or }\mathbb{C}^{n})$ is also open in the usual Euclidean topology of $\mathbb{R}^{n}$ $(\mbox{or }\mathbb{C}^{n}).$ Further, if $\mathcal{O}$ is non-empty, then it is also dense in the usual topology. 
\end{lemma}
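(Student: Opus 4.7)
The plan is to treat the two assertions separately, both by reducing to standard facts about zero sets of polynomials.

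First, for openness, I would write $\mathcal{O} = \mathbb{R}^{n} \setminus V$ (or $\mathbb{C}^{n} \setminus V$), where $V = \{\mathbf{x} : f_{1}(\mathbf{x}) = \cdots = f_{k}(\mathbf{x}) = 0\}$ for some polynomials $f_{1}, \ldots, f_{k}$. Since each $f_{i}$ is Euclidean-continuous as a map into $\mathbb{R}$ or $\mathbb{C}$, the preimage $f_{i}^{-1}(\{0\})$ is Euclidean closed. Hence $V = \bigcap_{i=1}^{k} f_{i}^{-1}(\{0\})$ is a finite intersection of Euclidean closed sets and is therefore Euclidean closed, making $\mathcal{O}$ Euclidean open.

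Next, under the hypothesis that $\mathcal{O}$ is non-empty, I would establish density by showing that $V$ has empty Euclidean interior. Since $\mathcal{O} \neq \emptyset$, $V$ is a proper subset, so at least one of the defining polynomials—call it $f$—is not identically zero. Because $V \subseteq \{f = 0\}$, it suffices to prove that the zero set of a non-zero polynomial has empty Euclidean interior. I would argue by contradiction: if $\{f = 0\}$ contained a non-empty Euclidean open ball $B$, then $f$ would vanish on $B$, which forces $f \equiv 0$ by the classical fact that a polynomial vanishing on a non-empty open subset of $\mathbb{R}^{n}$ (or $\mathbb{C}^{n}$) must be the zero polynomial. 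This contradicts the choice of $f$, so $\{f = 0\}$, and therefore $V$, has empty interior, showing that $\mathcal{O}$ is Euclidean dense.

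There is no real obstacle here; the only non-trivial ingredient is the standard fact just invoked, which can be proved by induction on the number of variables—using that a non-zero univariate polynomial has only finitely many zeros—or by appealing to real/complex analyticity of polynomials and the identity theorem.
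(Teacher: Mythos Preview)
Your argument is correct and is the standard proof of this fact. The paper does not actually supply its own proof of this lemma; it simply records it as a known result and cites \cite{Mumford99}, so there is nothing to compare against beyond noting that your self-contained argument is exactly what one would expect.
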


\begin{lemma} \label{lem:NonEmZarOpenCandR}
Let $\mathcal{O}$ be a non-empty Zariski open subset of $\mathbb{C}^{n}.$ Then $\mathcal{O} \cap \mathbb{R}^{n}$ is a non-empty Zariski open subset of $\mathbb{R}^{n}.$ 
\end{lemma}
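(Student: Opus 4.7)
The plan is to prove the two assertions separately: first, that $\mathcal{O} \cap \mathbb{R}^n$ is Zariski open in $\mathbb{R}^n$, and second, that this intersection is non-empty. The first part is essentially bookkeeping, whereas the second relies on the Zariski density of $\mathbb{R}^n$ in $\mathbb{C}^n$, which I expect to be the only genuinely non-trivial ingredient.

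For Zariski openness, I would start from the definition: there exist polynomials $f_1, \ldots, f_k \in \mathbb{C}[x_1, \ldots, x_n]$ such that $\mathcal{O}^c = \{\mathbf{x} \in \mathbb{C}^n : f_1(\mathbf{x}) = \cdots = f_k(\mathbf{x}) = 0\}$. Splitting each polynomial into its real and imaginary parts, write $f_j = g_j + i h_j$ with $g_j, h_j \in \mathbb{R}[x_1, \ldots, x_n]$. For any $\mathbf{x} \in \mathbb{R}^n$ the vanishing $f_j(\mathbf{x}) = 0$ is equivalent to the simultaneous vanishing $g_j(\mathbf{x}) = h_j(\mathbf{x}) = 0$. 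Consequently $\mathcal{O}^c \cap \mathbb{R}^n$ is the common real zero set of the $2k$ real polynomials $\{g_j, h_j : j \in [k]\}$, which is by definition Zariski closed in $\mathbb{R}^n$. Hence $\mathcal{O} \cap \mathbb{R}^n$ is Zariski open in $\mathbb{R}^n$.

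For non-emptiness, the key fact is that $\mathbb{R}^n$ is Zariski dense in $\mathbb{C}^n$. To establish this, suppose a polynomial $f = g + i h \in \mathbb{C}[x_1, \ldots, x_n]$ vanishes identically on $\mathbb{R}^n$. Then both the real polynomials $g$ and $h$ vanish on all of $\mathbb{R}^n$, and since a real polynomial that vanishes on the whole of $\mathbb{R}^n$ must be the zero polynomial (an elementary induction on $n$, or by noting that $\mathbb{R}^n$ is infinite in every coordinate), we conclude $g \equiv h \equiv 0$ and therefore $f \equiv 0$. Thus the only polynomial ideal cutting out $\mathbb{R}^n$ in $\mathbb{C}^n$ is the zero ideal, so the Zariski closure of $\mathbb{R}^n$ in $\mathbb{C}^n$ is all of $\mathbb{C}^n$.

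Armed with this, I would argue by contradiction: if $\mathcal{O} \cap \mathbb{R}^n = \emptyset$, then $\mathbb{R}^n \subseteq \mathcal{O}^c$. Since $\mathcal{O}^c$ is Zariski closed in $\mathbb{C}^n$, it contains the Zariski closure of $\mathbb{R}^n$, namely $\mathbb{C}^n$ itself. Hence $\mathcal{O}^c = \mathbb{C}^n$, i.e., $\mathcal{O} = \emptyset$, contradicting the hypothesis that $\mathcal{O}$ is non-empty. The main obstacle I anticipate is the Zariski density step, but as shown above it reduces cleanly to the statement that a real polynomial vanishing on all of $\mathbb{R}^n$ is identically zero.
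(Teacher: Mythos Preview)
Your proof is correct. The paper itself does not supply a proof of this lemma; it is listed among several standard results from algebraic geometry and attributed to Mumford's text without further argument. Your argument fills in the details directly: splitting each defining polynomial into real and imaginary parts shows that $\mathcal{O}^c \cap \mathbb{R}^n$ is cut out by real polynomials, establishing Zariski openness of $\mathcal{O} \cap \mathbb{R}^n$; and the Zariski density of $\mathbb{R}^n$ in $\mathbb{C}^n$ (reduced, as you note, to the elementary fact that a polynomial vanishing on all of $\mathbb{R}^n$ is identically zero) yields non-emptiness by contradiction. This is the standard route and is complete as written.
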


\begin{theorem} \label{thm:Bezout's-Theorem}
For a polynomial system $F(\mathbf{x}) \equiv(f_{1}(\mathbf{x}),\ldots,f_{n}(\mathbf{x})) = 0,$ where $f_{k}:\mathbb{C}^{n} \rightarrow \mathbb{C},$ the total number of its isolated solutions, counting multiplicities, is bounded above by its total degree, i.e., the product $\deg(f_{1})\cdots\deg(f_{n}).$
\end{theorem}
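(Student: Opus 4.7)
The plan is to reduce the statement to the classical Bezout theorem in projective space. For each $k \in [n]$, let $d_k = \deg(f_k)$ and homogenize by setting $F_k(x_0, x_1, \ldots, x_n) = x_0^{d_k} f_k(x_1/x_0, \ldots, x_n/x_0)$, a homogeneous polynomial of degree $d_k$ defining a hypersurface $\tilde V_k \subset \mathbb{P}^n$. Under the standard embedding $\iota : \mathbf{x} \mapsto [1 : x_1 : \cdots : x_n]$, each isolated affine solution $\mathbf{x}^*$ of $F(\mathbf{x}) = 0$ corresponds to an isolated point $\iota(\mathbf{x}^*) \in \tilde V := \bigcap_{k=1}^n \tilde V_k$ lying in the affine chart $\{x_0 \neq 0\}$, and this correspondence preserves local intersection multiplicities (the local rings at $\mathbf{x}^*$ in $\mathbb{C}^n$ and at $\iota(\mathbf{x}^*)$ in $\mathbb{P}^n$ are isomorphic).

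Next, I would invoke the projective Bezout theorem: when the intersection $\tilde V$ is zero-dimensional, $\sum_{P \in \tilde V} m_P(\tilde V_1, \ldots, \tilde V_n) = \prod_{k=1}^n d_k$, and in general the sum of the intersection multiplicities taken only over the isolated components of $\tilde V$ is bounded above by $\prod_k d_k$, the remainder being absorbed by the positive-dimensional components. Since the $\iota$-images of distinct affine isolated solutions are distinct projective isolated points, summing the preserved multiplicities yields the claimed bound.

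The main obstacle is establishing projective Bezout in the possibly non-proper setting, since in general $\tilde V$ may have positive-dimensional components (for instance when finitely many affine solutions coexist with a solution curve at infinity) and one must define intersection multiplicities carefully. The cleanest way around this is a perturbation argument: replace $F$ by $F_\epsilon := F + \epsilon F'$ for a generic polynomial system $F'$ of the same multidegree $(d_1, \ldots, d_n)$; for generic small $\epsilon$, $\tilde V_\epsilon$ is zero-dimensional of cardinality exactly $\prod_k d_k$, and upper semicontinuity of local intersection multiplicities in flat families implies the desired inequality for the original system as $\epsilon \to 0$.

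An alternative route, which is the approach taken in \cite{Sommese05} and avoids classical intersection theory entirely, is via homotopy continuation: one constructs a total-degree start system $G(\mathbf{x}) := (x_1^{d_1} - c_1, \ldots, x_n^{d_n} - c_n)$ with exactly $\prod_k d_k$ nonsingular isolated solutions and tracks the solution paths of the homotopy $H(\mathbf{x}, t) := (1-t) G(\mathbf{x}) + t \gamma F(\mathbf{x})$ for a generic $\gamma \in \mathbb{C}$. Using Sard's theorem and properness arguments, one shows that each isolated solution of $F$ is the limit, as $t \to 1$, of at least one continuation path originating from a solution of $G$ at $t = 0$, counted with multiplicity. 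The number of such paths is $\prod_k d_k$, giving the bound directly.
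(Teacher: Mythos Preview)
The paper does not actually prove this theorem. It is stated in the appendix as one of several ``standard results from numerical algebraic geometry'' and is simply cited to \cite{Sommese05}; no argument is given. So there is no paper proof to compare against.

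Your sketch is reasonable and both routes you describe are standard. The homotopy-continuation approach you mention second is in fact the one developed in the cited reference \cite{Sommese05}, so in that sense your second route aligns with what the paper implicitly defers to. Since the paper treats this as a black-box citation, any correct proof you supply goes strictly beyond what the paper does.
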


\begin{theorem} \label{thm:ConstFinSolutions}
For $k \in [n],$ let $f_{k}(\mathbf{x};\mathbf{q}):\mathbb{C}^{n} \times \mathbb{C}^{m} \rightarrow \mathbb{C}$ be a polynomial in both $\mathbf{x}$ and $\mathbf{q}.$ Then for the polynomial system $F(\mathbf{x};\mathbf{q})=0,$ where $F(\mathbf{x}; \mathbf{q}) \equiv (f_{1}(\mathbf{x};\mathbf{q}), \ldots, f_{n}(\mathbf{x};\mathbf{q})),$ there exists a non-empty Zariski open set $\mathcal{O} \subset \mathbb{C}^{m}$ such that, for each $\mathbf{q} \in \mathcal{O},$ the system has $r_{i}$ isolated solutions of multiplicity $i,$ where $r_{i}$ is an integer independent of $\mathbf{q}\in\mathcal{O}.$ 
\end{theorem}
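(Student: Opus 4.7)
The strategy is to view the zero locus of $F$ as an algebraic correspondence over the parameter space, then combine Chevalley's theorem on constructible sets with upper semi-continuity of both fiber dimension and solution multiplicity.

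First, consider the incidence variety $V := \{(\mathbf{x},\mathbf{q})\in\mathbb{C}^{n}\times\mathbb{C}^{m}:F(\mathbf{x};\mathbf{q})=0\}$ and the projection $\pi:V\to\mathbb{C}^{m}$, $(\mathbf{x},\mathbf{q})\mapsto\mathbf{q}$. Decompose $V$ into its finitely many irreducible components. Components whose image under $\pi$ is contained in a proper Zariski closed subset of $\mathbb{C}^{m}$ can be removed by intersecting the eventual $\mathcal{O}$ with the complement of that image. On each remaining component, upper semi-continuity of fiber dimension guarantees that $\pi^{-1}(\mathbf{q})$ attains its minimum (generic) dimension on a non-empty Zariski open subset of $\mathbb{C}^{m}$; taking the intersection of these finitely many open sets produces a non-empty Zariski open $U_{1}\subseteq\mathbb{C}^{m}$ on which every component of $\pi^{-1}(\mathbf{q})$ either is zero-dimensional (contributing isolated solutions) or is positive-dimensional and persists generically (contributing no isolated solutions).

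Second, for $\mathbf{q}\in U_{1}$, Theorem~\ref{thm:Bezout's-Theorem} bounds the total number of isolated solutions of $F(\cdot\,;\mathbf{q})=0$, counted with multiplicity, by $B := \prod_{k=1}^{n}\deg_{\mathbf{x}}(f_{k})$. One may therefore associate with each $\mathbf{q}\in U_{1}$ a tuple $(r_{1}(\mathbf{q}),\ldots,r_{B}(\mathbf{q}))\in\mathbb{Z}_{+}^{B}$ satisfying $\sum_{i}i\,r_{i}(\mathbf{q})\leq B$, and only finitely many such tuples occur overall. For each tuple $\boldsymbol{r}$ define $S_{\boldsymbol{r}}:=\{\mathbf{q}\in U_{1}:(r_{1}(\mathbf{q}),\ldots,r_{B}(\mathbf{q}))=\boldsymbol{r}\}$. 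The key step is to show that each $S_{\boldsymbol{r}}$ is a constructible subset of $\mathbb{C}^{m}$: the multiplicity of an isolated solution $\mathbf{x}_{0}$ over a parameter $\mathbf{q}_{0}$ equals the length of the local Artinian algebra $\mathcal{O}_{V,(\mathbf{x}_{0},\mathbf{q}_{0})}/\mathfrak{m}_{\mathbf{q}_{0}}\mathcal{O}_{V,(\mathbf{x}_{0},\mathbf{q}_{0})}$, which is upper semi-continuous in $\mathbf{q}$ by standard scheme-theoretic arguments. Combined with Chevalley's theorem that the image of a constructible set under an algebraic morphism is constructible, each $r_{i}$ becomes a constructible function on $U_{1}$. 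Since $U_{1}$ is a finite disjoint union $\bigcup_{\boldsymbol{r}}S_{\boldsymbol{r}}$ of constructible sets covering a non-empty Zariski open set, some stratum $S_{\boldsymbol{r}^{\star}}$ must itself contain a non-empty Zariski open subset $\mathcal{O}\subseteq\mathbb{C}^{m}$, and setting $r_{i}:=r_{i}^{\star}$ completes the argument.

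The main obstacle is rigorously establishing the semi-continuity of solution multiplicities. The cleanest formal route goes through Hilbert-scheme or flat-family machinery and the fact that the Hilbert polynomial of the scheme-theoretic fiber is locally constant on a constructible stratification. A more hands-on alternative would build explicit algebraic stratifications of $\mathbb{C}^{m}$ via iterated resultants and subresultants of $f_{1},\ldots,f_{n}$ with respect to $x_{1},\ldots,x_{n}$, on whose Zariski locally closed pieces the solution multiplicities are visibly locally constant. Either approach reduces the theorem to the fundamental statement that generic fibers of a morphism of algebraic varieties share a common scheme-theoretic structure.
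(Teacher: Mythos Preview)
The paper does not actually prove this theorem: it is quoted as a standard result from numerical algebraic geometry, with a citation to \cite{Sommese05}. There is therefore no ``paper's own proof'' to compare against.

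Your sketch is the correct outline of how such results are established, and the overall architecture---incidence variety, projection to parameter space, semi-continuity of fiber dimension, Bezout bound to force finitely many multiplicity tuples, constructibility of the multiplicity strata, and irreducibility of $\mathbb{C}^{m}$ forcing one stratum to contain a non-empty Zariski open---is sound. A couple of points are worth tightening. First, after restricting to $U_{1}$ you assert that positive-dimensional components ``persist generically'' and hence contribute no isolated solutions; this is true, but the cleaner statement is simply that an isolated solution of $F(\cdot;\mathbf{q})=0$ is exactly an isolated point of the scheme-theoretic fiber $\pi^{-1}(\mathbf{q})$, and the locus of parameters over which a given component has zero-dimensional fiber is already handled by the fiber-dimension argument. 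Second, in the final step you want that a finite constructible partition of an irreducible variety has a unique dense member; this is standard, but it is worth saying explicitly that $\mathbb{C}^{m}$ is irreducible so that the closures of the $S_{\boldsymbol{r}}$ cannot all be proper.

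You are right to flag the multiplicity semi-continuity as the genuine content. The cleanest packaging is generic flatness (Grothendieck): there is a non-empty Zariski open $U\subseteq\mathbb{C}^{m}$ over which the zero-dimensional part of $V$ is flat, and flatness forces the fiberwise length---hence the full multiplicity profile---to be locally constant on $U$. This replaces both the upper semi-continuity step and the constructible-stratification argument in one stroke, and is essentially how \cite{Sommese05} would justify the statement.
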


\begin{theorem} \label{thm:RegValZarOpen}
Let $F: \mathbb{C}^{d \cdot N_i} \rightarrow \mathbb{C}^{d \cdot N_i}$ be a polynomial map. Then there exists a non-empty Zariski open set $U$ of the co-domain such that for any $\mathbf{u}\in U,$ if $\mathbf{z}$ is a root of $F(\mathbf{x})-\mathbf{u}=0,$ then $\dot{F}(\mathbf{z})$ is non-singular.
\end{theorem}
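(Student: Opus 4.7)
The plan is to deduce this as a version of the algebraic-geometric Sard theorem, where the Jacobian determinant $J(\mathbf{x}) := \det(\dot{F}(\mathbf{x})) \in \mathbb{C}[x_1, \ldots, x_{d \cdot N_i}]$ plays the central role. First I would split into two cases depending on whether $J$ is identically zero as a polynomial.

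In the degenerate case $J \equiv 0$, the rank of $\dot{F}$ is strictly less than $d \cdot N_i$ at every point, so by the fiber-dimension theorem for morphisms of algebraic varieties the Zariski closure $\overline{F(\mathbb{C}^{d \cdot N_i})}$ is a proper closed subvariety of the codomain. Taking $U$ to be the complement of this closure yields a non-empty Zariski open set on which $F(\mathbf{x}) - \mathbf{u} = 0$ has no roots at all, so the conclusion holds vacuously.

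In the generic case $J \not\equiv 0$, the critical set $C := \{\mathbf{x} \in \mathbb{C}^{d \cdot N_i} : J(\mathbf{x}) = 0\}$ is a proper Zariski closed subvariety of dimension exactly $d \cdot N_i - 1$. Applying the dimension bound to the restriction $F|_C$, the Zariski closure $\overline{F(C)}$ has dimension at most $d \cdot N_i - 1$ and is in particular a proper Zariski closed subset of the codomain. The natural choice is then $U := \mathbb{C}^{d \cdot N_i} \setminus \overline{F(C)}$, which is non-empty and Zariski open by construction. For any $\mathbf{u} \in U$ and any $\mathbf{z}$ with $F(\mathbf{z}) = \mathbf{u}$, we must have $\mathbf{z} \notin C$ (else $\mathbf{u} \in F(C) \subseteq \overline{F(C)}$, contradicting $\mathbf{u} \in U$), and therefore $J(\mathbf{z}) \neq 0$, i.e.\ $\dot{F}(\mathbf{z})$ is non-singular, as required.

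The hard part, really the only non-trivial input, is the classical algebraic-geometric statement that the Zariski closure of the image of a morphism of varieties has dimension bounded by the source dimension, so that a proper closed subvariety of the source maps into a proper closed subvariety of the target. This is a consequence of Chevalley's theorem on constructible sets together with the theorem on dimensions of fibers (see \cite{Hartshorne77}), and is precisely why the statement is cited in the paper rather than proved from scratch; everything else in the argument is a formal manipulation of Zariski open sets.
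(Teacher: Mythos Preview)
Your argument is correct and is the standard proof of the algebraic Sard theorem in characteristic zero: split on whether the Jacobian determinant vanishes identically, and in each case use dimension theory (Chevalley plus fiber dimension/generic smoothness) to produce a proper closed subset of the codomain whose complement works. One small imprecision: in the case $J\not\equiv 0$ you assert $\dim C = d\cdot N_i - 1$, but if $J$ is a nonzero constant then $C=\emptyset$; this is harmless since then $\overline{F(C)}=\emptyset$ and $U$ is the whole codomain.

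There is nothing to compare against here, because the paper does not prove Theorem~\ref{thm:RegValZarOpen}: it is one of several background results simply quoted from the literature, with references to \cite{Hartshorne77} and \cite{Li96}. Your sketch is exactly the argument one finds behind those citations, and you correctly identify that the only substantive input is the dimension bound for images of morphisms.
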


We are now ready to prove Lemma~\ref{lem:EPSWellBeh}. But we first show its generalization in $\mathbb{C}^{d \cdot N_i}.$  
\begin{lemma} \label{lem:EPSWellBehDup}
There exists an open dense set $\mathcal{C}_i$ of $\mathbb{C}^{d \cdot N_i}$ such that if $\mathbf{w} \in \mathcal{C}_i,$ then the solution set of the EPS  given in \eqref{eq:genElePolSys} satisfies the following properties:

\begin{enumerate}[1.]
\item $|V(H_i)| = k \times N_i!,$ where $k \in \mathbb{Z}_{++}$ is independent of $\mathbf{w} \in \mathcal{C}_i,$ and 

\item Each solution is non-singular.
\end{enumerate}

\end{lemma}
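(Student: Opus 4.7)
The plan is to realize $\mathcal{C}_i$ as the intersection of three non-empty Zariski open subsets of $\mathbb{C}^{d \cdot N_i}$: one ($\mathcal{O}_1$) enforcing constancy of the isolated-root count, one ($\mathcal{O}_2$) enforcing non-singularity of every root, and one ($\mathcal{O}_3$) enforcing triviality of every $S_{N_i}$-stabilizer. The factor of $N_i!$ will then fall out automatically from the symmetry Lemma~\ref{lem:EPSSymm}. Non-emptiness of all three Zariski open sets will hinge on Lemma~\ref{lem:egRegPt}, which shows $\det(\dot{\mathcal{E}})$ is not identically zero and therefore that $\mathcal{E}:\mathbb{C}^{d \cdot N_i} \to \mathbb{C}^{d \cdot N_i}$ is a dominant polynomial map.

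First, I would apply Theorem~\ref{thm:ConstFinSolutions} to the parametrized family $F(\mathbf{x};\mathbf{w}) := \mathcal{E}(\mathbf{x}) - \mathcal{E}(\mathbf{w})$, which is polynomial in $(\mathbf{x},\mathbf{w})$ jointly, to extract a non-empty Zariski open $\mathcal{O}_1$ on which the number of isolated solutions of each multiplicity is independent of $\mathbf{w}$. Next, Theorem~\ref{thm:RegValZarOpen} applied to $\mathcal{E}$ produces a non-empty Zariski open $U \subset \mathbb{C}^{d \cdot N_i}$ of regular values; set $\mathcal{O}_2 := \mathcal{E}^{-1}(U)$, which is Zariski open (as the preimage of a Zariski open set under a polynomial map) and non-empty because $\mathcal{E}$ is dominant. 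Finally, let $D := \bigcup_{1 \le j_1 < j_2 \le N_i}\{\mathbf{x} : \mathbf{x}_{j_1} = \mathbf{x}_{j_2}\}$ be the ``big diagonal''. Since $D$ is a finite union of linear subspaces of dimension $d(N_i-1)$, the image $\mathcal{E}(D)$ is constructible with Zariski closure of dimension at most $d(N_i-1) < d \cdot N_i$, so $\mathbb{C}^{d \cdot N_i} \setminus \overline{\mathcal{E}(D)}$ is non-empty Zariski open. Set $\mathcal{O}_3 := \mathcal{E}^{-1}\!\bigl(\mathbb{C}^{d \cdot N_i} \setminus \overline{\mathcal{E}(D)}\bigr)$, again non-empty Zariski open by dominance.

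Put $\mathcal{C}_i := \mathcal{O}_1 \cap \mathcal{O}_2 \cap \mathcal{O}_3$. Because $\mathbb{C}^{d \cdot N_i}$ is irreducible, a finite intersection of non-empty Zariski open subsets is non-empty and Zariski open, hence open and dense in the Euclidean topology by Lemma~\ref{lem:NonEmpDense}. For $\mathbf{w} \in \mathcal{C}_i$: membership in $\mathcal{O}_2$ forces $\dot{\mathcal{E}}(\mathbf{x}^*)$ to be non-singular at every $\mathbf{x}^* \in V(H_i)$, so every root is isolated of multiplicity one; combined with $\mathcal{O}_1$ this yields $|V(H_i)| = M$ for some $M \in \mathbb{Z}_{++}$ independent of $\mathbf{w}$. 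Membership in $\mathcal{O}_3$ rules out $V(H_i) \cap D$, so every $\mathbf{x}^* \in V(H_i)$ has pairwise distinct $d$-blocks $\mathbf{x}^*_1, \ldots, \mathbf{x}^*_{N_i}$ and therefore trivial $S_{N_i}$-stabilizer. By Lemma~\ref{lem:EPSSymm}, $S_{N_i}$ acts on $V(H_i)$, so $V(H_i)$ partitions into orbits of size exactly $N_i!$, giving $M = k \cdot N_i!$ with $k \geq 1$ (since $\mathbf{w}$ itself is a root).

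The main obstacle is establishing dominance of $\mathcal{E}$, without which none of $\mathcal{O}_2, \mathcal{O}_3$ can be shown non-empty; this is precisely the role of Lemma~\ref{lem:egRegPt}, where the explicit evaluation $\det(\dot{\mathcal{E}}(\mathbf{x}))|_{\mathbf{x}=\mathbf{a}} = \prod_{j_1<j_2}(a_{j_1}-a_{j_2})^d \neq 0$ at the ``constant-block'' points $\mathbf{a}$ exhibits a regular point and therefore certifies dominance. A secondary subtlety is the dimension bound $\dim \overline{\mathcal{E}(D)} \leq \dim D$, which relies on the standard fact that polynomial maps cannot increase dimension.
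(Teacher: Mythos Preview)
Your proof is correct and uses the same core ingredients as the paper (Theorems~\ref{thm:ConstFinSolutions} and~\ref{thm:RegValZarOpen} together with Lemma~\ref{lem:egRegPt}), but there are two differences worth noting.

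First, your set $\mathcal{O}_3$ is redundant. The paper gets the $N_i!$ divisibility directly from $\mathcal{O}_2$: once every root is non-singular, trivial stabilizers are automatic. Indeed, the symmetry $\mathcal{E}\circ P_\sigma = \mathcal{E}$ (Lemma~\ref{lem:EPSSymm}) differentiates to $\dot{\mathcal{E}}(P_\sigma\mathbf{x})\,P_\sigma = \dot{\mathcal{E}}(\mathbf{x})$; at a fixed point $\mathbf{x}^* = P_\sigma\mathbf{x}^*$ this yields $\dot{\mathcal{E}}(\mathbf{x}^*)(P_\sigma - I) = 0$, so if $\sigma \neq e$ then $\dot{\mathcal{E}}(\mathbf{x}^*)$ is singular. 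Hence non-singular roots cannot lie on the big diagonal $D$, and the paper simply takes $\mathcal{C}_i = \mathcal{C}^2 \cap \mathcal{C}^3$ (your $\mathcal{O}_2 \cap \mathcal{O}_1$). Your dimension-count argument for $\mathcal{O}_3$ is valid, just unnecessary.

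Second, where you argue that $\mathcal{O}_2 = \mathcal{E}^{-1}(U)$ is non-empty Zariski open via dominance of $\mathcal{E}$ (deduced from Lemma~\ref{lem:egRegPt}), the paper instead shows the corresponding set is Euclidean-open and Euclidean-dense by a direct inverse-function-theorem argument: any ball meets the regular locus $\{\det\dot{\mathcal{E}}\neq 0\}$, so its image under $\mathcal{E}$ has non-empty interior and must meet the dense set $U$. Both routes work; yours is the cleaner algebro-geometric phrasing, while the paper's stays closer to analysis.
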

\begin{proof}
Consider the Zariski open set $\mathcal{C}^1 := \{\mathbf{x} \in \mathbb{C}^{d \cdot N_i} : \det(\dot{\mathcal{E}}(\mathbf{x})) \neq 0\}.$ From Lemmas~\ref{lem:egRegPt} and \ref{lem:NonEmpDense}, it is clear that $\mathcal{C}^1$ is non-empty and hence an open dense subset of $\mathbb{C}^{d \cdot N_i}.$ 

For the map $\mathcal{E}$ that is used to define the EPS, let $U \subseteq \mathbb{C}^{d \cdot N_i}$ be the non-empty Zariski open set as guaranteed in Theorem~\ref{thm:RegValZarOpen}. Also, let $\mathcal{C}^2:= \mathcal{E}^{-1}(U).$ Clearly, if $\mathbf{w} \in \mathcal{C}^2,$ then the EPS necessarily has only non-singular roots. Thus, $\mathcal{C}^2 \subseteq \mathcal{C}^1.$ But we now show that $\mathcal{C}^2$ itself is an open dense subset of $\mathbb{C}^{d \cdot N_i}.$ Since $U$ is open and $\mathcal{E}$ is continuous, it follows that $\mathcal{C}^2$ is open. To show that it is dense we assume the contrary. Then clearly, there exists a $\delta > 0$ and $\mathbf{x} \in \mathbb{C}^{d \cdot N_i} \backslash \mathcal{C}^2$ such that $\mathcal{B}(\mathbf{x}; \delta) \cap \mathcal{C}^2 = \emptyset.$ This implies that
\begin{equation} \label{eq:ballUdisjoint}
\mathcal{E}(\mathcal{B}(\mathbf{x}; \delta)) \cap U = \emptyset.
\end{equation}
But $\mathcal{C}^1$ is an open dense subset. Thus, $\mathcal{B}(\mathbf{x}; \delta) \cap \mathcal{C}^1  \neq \emptyset.$ From the inverse function theorem, it follows that, for any $\mathbf{z} \in \mathcal{B}(\mathbf{x}; \delta) \cap \mathcal{C}^1,$ $\mathcal{E}$ is a local homeomorphism at $\mathbf{z}.$ Thus, $\mathcal{E}(\mathcal{B}(\mathbf{x}; \delta))$ has non-empty interior. Equation \eqref{eq:ballUdisjoint} then contradicts the fact that $U$ is an open dense subset. Hence, $\mathcal{C}^2$ is also open dense.

For the map $F(\mathbf{x}, \mathbf{w}): \mathbb{C}^{d \cdot N_i} \rightarrow \mathbb{C}^{d \cdot N_i}$ defined by $F(\mathbf{x}, \mathbf{w}) = \mathcal{E}(\mathbf{x}) - \mathcal{E}(\mathbf{w}),$ let $\mathcal{C}^3$ be the set as guaranteed in Theorem~\ref{thm:ConstFinSolutions}. Then clearly, $\mathcal{C}_i := \mathcal{C}^2 \cap \mathcal{C}^3$ is an open dense subset of $\mathbb{C}^{d \cdot N_i}.$ Furthermore, if $\mathbf{w} \in \mathcal{C}_i,$ then 
\begin{enumerate}[i.]
\item the EPS has only non-singular roots or equivalently $r_i = 0,$ \, $\forall i \neq 1,$ and

\item there are precisely $r_1 \in \mathbb{Z}_{++}$ non-singular roots, where $r_1$ is independent of $\mathbf{w} \in \mathcal{C}_i.$ 
\end{enumerate}
Note that $r_1 > 1$ since $\mathbf{w}$ is always a root. Because of Theorem~\ref{thm:Bezout's-Theorem}, it is also finite. 

From the symmetry of the EPS (see Lemma~\ref{lem:EPSSymm}) and the non-singularity of the roots, it is now easy to see that $|V(H_i)| = k \times N_i!$ for some $k \in \mathbb{Z}_{++}.$ \qquad \end{proof}

Let $\mathcal{C}^1, \mathcal{C}^2, \mathcal{C}^3$ and $U$ be defined as in the proof above.  From Lemmas~\ref{lem:NonEmpDense} and \ref{lem:NonEmZarOpenCandR}, it follows that $\mathcal{R}^1 := \mathcal{C}^1 \cap \mathbb{R}^{d \cdot N_i},$ $U \cap \mathbb{R}^{d \cdot N_i}$ and $\mathcal{R}^3 := \mathcal{C}^3 \cap \mathbb{R}^{d \cdot N_i}$  are open dense subsets of $\mathbb{R}^{d \cdot N_i}.$ The same argument that was used to prove $\mathcal{C}^2$ is an open dense subset of $\mathbb{C}^{d \cdot N_i},$ with the real version of the inversion of the inverse function theorem,  also shows that $\mathcal{R}^2 := \mathcal{E}^{-1}(U \cap \mathbb{R}^{d \cdot N_i}) \cap \mathbb{R}^{d \cdot N_i}$ is an open dense subset of $\mathbb{R}^{d \cdot N_i}.$ But note that all coefficients of $\mathcal{E}$ are real. Hence, $\mathcal{R}^2 = \mathcal{E}^{-1}(U) \cap \mathbb{R}^{d \cdot N_i} = \mathcal{C}^2 \cap \mathbb{R}^{d \cdot N_i}.$ This now implies that $\mathcal{R}_i : = \mathcal{R}^2 \cap \mathcal{R}^3 = \mathcal{C}_i \cap \mathbb{R}^{d \cdot N_i}$ is an open dense subset of $\mathbb{R}^{d \cdot N_i}.$ Since $\mathcal{R}_i \subset \mathcal{C}_i,$ even if $\mathbf{w} \in \mathbb{R}_i$ the EPS has the same properties as those described in Lemma~\ref{lem:EPSWellBehDup}. This now completes the verification Lemma~\ref{lem:EPSWellBeh} as desired. 
\end{appendix}

\section*{Acknowledgments}
The author would like to sincerely thank his advisors, Prof. V.~Borkar and Prof. D.~Manjunath, for inspiring, motivating and guiding him right through this work. He would also like to thank C.~Wampler, A.~Sommese, R.~Gandhi, S.~Gurjar and M.~Gopalkrishnan for helping him understand several results from algebraic geometry. Lastly, he would like to thank S.~Juneja for referring \cite{Botha86}.
 
\bibliographystyle{siam}
\bibliography{tomography}

\begin{thebibliography}{10}

\bibitem{Adams00}
{\sc A.~Adams, T.~Bu, R.~Caceres, N.~Duffield, T.~Friedman, J.~Horowitz, F.~Lo
  Presti, S.~B. Moon, V.~Paxson, and D.~Towsley}, {\em The use of end-to-end
  multicast measurements for characterizing internal network behaviors}, {IEEE}
  Communications Magazine,  (2000).

\bibitem{Botha86}
{\sc R.~F. Botha and C.~M. Harris}, {\em Approximation with generalized
  hyperexponential distributions: Weak convergence results}, Queueing Systems
  2,  (1986), pp.~169 -- 190.

\bibitem{Caceres99}
{\sc R.~Caceres, N.~G. Duffield, J.~Horowitz, and D.~F. Towsley}, {\em
  Multicast-based inference of network-internal loss characteristics}, {IEEE}
  Transactions on Information Theory, 45 (1999), pp.~2462--2480.

\bibitem{Castro04}
{\sc R.~Castro, M.~J. Coates, G.~Liang, R.~Nowak, and B.~Yu}, {\em Internet
  tomography: Recent developments}, Statistical Science, 19 (2004),
  pp.~499--517.

\bibitem{Chen10}
{\sc A.~Chen, J.~Cao, and T.~Bu}, {\em Network tomography: Identifiability and
  fourier domain estimation}, IEEE Transactions on Signal Processing, 58
  (2010), pp.~6029--6039.

\bibitem{Chen04}
{\sc Y.~Chen, D.~Bendel, and R.H. Katz}, {\em An algebraic approach to
  practical and scalable overlay network monitoring}, in Proceedings of IEEE
  INFOCOM, 2001.

\bibitem{Coates01}
{\sc M.~J. Coates and R.~Nowak}, {\em Network tomography for internal delay
  estimation}, in Proceedings of IEEE International Conference on Acoustics,
  Speech, and Signal Processing (ICASSP), 2001.

\bibitem{Deng12}
{\sc K.~Deng, Y.~Li, W.~Zhu, Z.~Geng, and J.S. Liu}, {\em On delay tomography:
  fast algorithms and spatially dependent models}, IEEE Transactions on Signal
  Processing, 60 (2012), pp.~5685 -- 5697.

\bibitem{Dey11}
{\sc B.~K. Dey, D.~Manjunath, and S.~Chakraborty}, {\em Estimating network link
  characteristics using packet-pair dispersion: A discrete-time queueing
  theoretic analysis}, Computer Networks, 55 (2011), pp.~1052--1068.

\bibitem{Firooz10}
{\sc M.~H. Firooz and S.~Roy}, {\em Network tomography via compressive
  sensing}, Proc. of IEEE Globecom,  (2010).

\bibitem{Gurewitz01}
{\sc O.~Gurewitz and M.~Sidi}, {\em Estimating one-way delay from cyclic-path
  delay measurements}, in Proceedings of IEEE INFOCOM, 2001.

\bibitem{Hartshorne77}
{\sc R.~Hartshorne}, {\em Algebraic Geometry}, Springer-Verlag, 1977.

\bibitem{Hoeffding63}
{\sc W.~Hoeffding}, {\em Probability inequalities for sums of bounded random
  variables}, J. Amer. Statist. Assoc.,  (1963).

\bibitem{Lawrence07}
{\sc E.~Lawrence, G.~Michailidis, and V.~N. Nair}, {\em Statistical inverse
  problems in active network tomography}, Lecture-Notes Monograph Series, 54,
  Complex Datasets and Inverse Problems: Tomography, Networks and Beyond
  (2007).

\bibitem{LeBlanc82}
{\sc L.~J. LeBlanc and K.~Farhangian}, {\em Selection of trip matrices from
  observed link volumes}, Transportation Research, 16B (1982).

\bibitem{Li97}
{\sc T.~Y. Li}, {\em Numerical solution of multivariate polynomial systems by
  homotopy continuation methods}, Acta Numerica,  (1997), p.~399.

\bibitem{Li96}
{\sc T.~Y. Li and X.~Wang}, {\em The {BKK} root count in $\mathbb{C}^n$}, Math.
  Comp, 65 (1996), pp.~1477--1484.

\bibitem{Liu07}
{\sc X.~Liu, K.~Ravindran, and D.~Loguinov}, {\em A queuing-theoretic
  foundation of available bandwidth estimation: Single-hop analysis}, IEEE/ACM
  Transactions on Networking, 15 (2007), pp.~918--931.

\bibitem{Morgan89}
{\sc A.~Morgan, A.~Sommese, and L.~Watson}, {\em Finding all isolated solutions
  to polynomial systems using hompack}, ACM Trans. Math. Softw., 15 (1989),
  pp.~93 -- 122.

\bibitem{Mumford99}
{\sc D.~Mumford}, {\em The red book of varieties and schemes}, Springer-Verlag,
  1999, pp.~57--63.

\bibitem{Ou97}
{\sc J.~Ou, J.~Li, and S.~Ozekici}, {\em Approximating a cumulative
  distribution function by generalized hyperexponential distributions},
  Probability in the Engineering and Informational Sciences,  (1997), pp.~11 --
  18.

\bibitem{LoPresti02}
{\sc F.~Lo Presti, N.~G. Duffield, J.~Horowitz, and D.~Towsley}, {\em
  Multicast-based inference of network-internal delay distributions}, IEEE/ACM
  Transactions on Networking, 10 (2002), pp.~761--775.

\bibitem{Shih03}
{\sc M.~F. Shih and A.~O. Hero}, {\em Unicast-based inference of network link
  delay distributions using mixed finite mixture models}, IEEE Transactions on
  Signal Processing, 51 (2003), pp.~2219--2228.

\bibitem{Sommese05}
{\sc A.~J. Sommese and C.~W. Wampler}, {\em The numerical solution of systems
  of polynomials}, World Scientific, 2005.

\bibitem{GuganC12}
{\sc G.~Thoppe}, {\em Generalized network tomography}, in Proceedings of 50th
  Annual Allerton Conference on Communication, Control and Computing, 2012.

\bibitem{Tsang03}
{\sc Y.~Tsang, M.~J. Coates, and R.~Nowak}, {\em Network delay tomography},
  IEEE Transactions on Signal Processing: Special Issue on Signal Processing in
  Networking, 51 (2003).

\bibitem{Vardi96}
{\sc Y.~Vardi}, {\em Network tomography: Estimating source-destination traffic
  intensities from link data}, Journal of the American Statistical Association,
  91 (1996), pp.~365--377.

\bibitem{Verschelde99}
{\sc J.~Verschelde}, {\em Algorithm 795: Phcpack: a general-purpose solver for
  polynomial systems by homotopy continuation}, ACM Trans. Math. Softw., 25
  (1999), pp.~251--276.

\bibitem{Zhang03a}
{\sc Y.~Zhang, M.~Roughan, C.~Lund, and D.~Donoho}, {\em An
  information-theoretic approach to traffic matrix estimation}, in Proceedings
  of ACM SIGCOMM, August 25--29 2003.

\end{thebibliography}

\end{document}